\newcommand{\fig}[2]{\includegraphics[width=#1\linewidth]{figures/#2.pdf}}
\newtheorem{lemma}{Lemma}[section]
\newtheorem{prop}[lemma]{Proposition}
\newtheorem{theorem}{Theorem}
\newtheorem{cor}[lemma]{Corollary}
\newtheorem{conjecture}[lemma]{Conjecture}
\newcommand{\Isom}{{\rm Isom}}
\theoremstyle{definition}
\newtheorem{definition}[lemma]{Definition}
\newtheorem{rmrk}[lemma]{Remark}
\newcommand{\nmo}{{n\mkern1mu\text{-}\mkern-1mu1}}
\newcommand{\mat}[1]{\left(\begin{array}{rr}#1\end{array}\right)}
\newcommand{\PSLt}{\mathrm{PSL}_2(\R)}
\renewcommand{\r}{\mathbf{r}}
\newcommand{\N}{\mathbb N}
\newcommand{\tG}{\tilde \Gamma}
\newcommand{\y}{{\bm y}}
\newcommand{\z}{{\bm z}}
\newcommand{\G}{\Gamma}
\newcommand{\tr}{\mathrm{tr}}
\newcommand{\g}{\gamma}
\newcommand{\tg}{{\tilde{\gamma}}}
\newcommand{\pf}{{\n\em Proof. }}
\newcommand{\RP}{\mathbb{RP}}
\newcommand{\st}{\, | \,}
\newcommand{\R}{\mathbb{R}}
\newcommand{\C}{\mathbb{C}} 
\newcommand{\Z}{\mathbb{Z}}
\renewcommand{\H}{{\mathbb{H}^2}}
\newcommand{\s}{\small}
\renewcommand{\ss}{\scriptsize}
\newcommand{\dist}{\mathrm{dist}}  
\newcommand{\SL}{\mathrm{SL}}
\newcommand{\GL}{\mathrm{GL}}
\newcommand{\SLt}{{\SL_2(\R)}}
\newcommand{\SO}{\mathrm{SO}}
\newcommand{\slt}{\mathfrak{sl}_2(\R)}
\newcommand{\n}{\noindent}
\newcommand{\mn}{\medskip\noindent}
\newcommand{\be}{\begin{equation}}
\newcommand{\ee}{\end{equation}}
\renewcommand{\tG}{\widetilde\Gamma}
\title{Bicycle tracks with  hyperbolic monodromy -- \\ results and conjectures}
\author{G. Bor,  L. Hern\'andez-Lamoneda, S. Tabachnikov}
\begin{document}
\maketitle

{\em Dedicated to the memory of Robert Foote  (1953-2024).\\ }
\begin{abstract} We find new necessary and sufficient conditions for the bicycling monodromy of a closed plane curve to be hyperbolic. Our main tool is the ``hyperbolic development" interpretation of the bicycling monodromy of plane curves. Based on computer experiments, we pose two conjectures concerning the bicycling monodromy of strictly convex closed plane curves.

\end{abstract}

\tableofcontents
\section{Introduction}

\paragraph{The bicycling equation and monodromy.} A simple  bicycling  model consists of the planar motion of a directed line segment, whose endpoints trace two curves, the rear  and front tracks, such that the segment is tangent at each moment to the rear track (the `no-skid' condition). See Fig.~\ref{fig:tracks} (left).

\begin{figure}[h]
\centering
\def\svgwidth{.8\textwidth}
\begingroup%
  \makeatletter%
  \providecommand\color[2][]{%
    \errmessage{(Inkscape) Color is used for the text in Inkscape, but the package 'color.sty' is not loaded}%
    \renewcommand\color[2][]{}%
  }%
  \providecommand\transparent[1]{%
    \errmessage{(Inkscape) Transparency is used (non-zero) for the text in Inkscape, but the package 'transparent.sty' is not loaded}%
    \renewcommand\transparent[1]{}%
  }%
  \providecommand\rotatebox[2]{#2}%
  \newcommand*\fsize{\dimexpr\f@size pt\relax}%
  \newcommand*\lineheight[1]{\fontsize{\fsize}{#1\fsize}\selectfont}%
  \ifx\svgwidth\undefined%
    \setlength{\unitlength}{673.45207214bp}%
    \ifx\svgscale\undefined%
      \relax%
    \else%
      \setlength{\unitlength}{\unitlength * \real{\svgscale}}%
    \fi%
  \else%
    \setlength{\unitlength}{\svgwidth}%
  \fi%
  \global\let\svgwidth\undefined%
  \global\let\svgscale\undefined%
  \makeatother%
  \begin{picture}(1,0.2089004)%
    \lineheight{1}%
    \setlength\tabcolsep{0pt}%
    \put(0,0){\includegraphics[width=\unitlength,page=1]{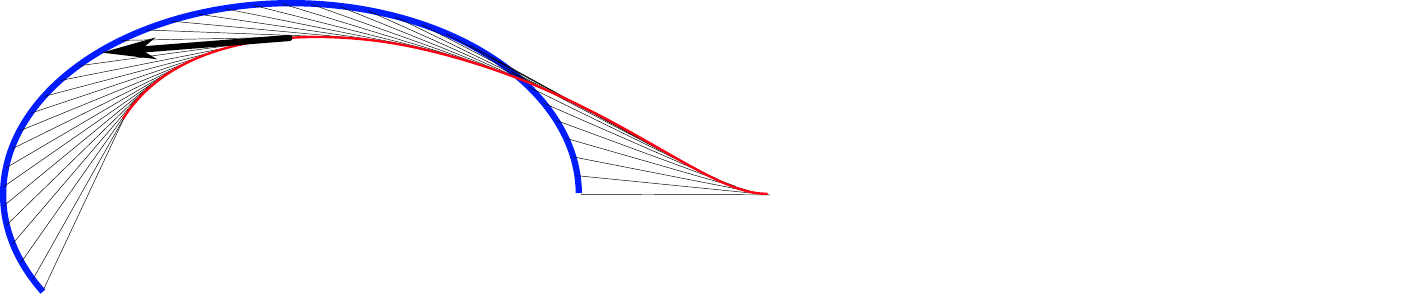}}%
    \put(0.03722178,0.1858713){\color[rgb]{0.03921569,0,0}\rotatebox{0.40487591}{\makebox(0,0)[lt]{\lineheight{1.25}\smash{\begin{tabular}[t]{l}$\G$\end{tabular}}}}}%
    \put(0.13381193,0.14162326){\color[rgb]{0.03921569,0,0}\rotatebox{0.40487591}{\makebox(0,0)[lt]{\lineheight{1.25}\smash{\begin{tabular}[t]{l}$\g$\end{tabular}}}}}%
    \put(0,0){\includegraphics[width=\unitlength,page=2]{no_skid.pdf}}%
    \put(0.77743199,0.17989845){\color[rgb]{1,0,0.0745098}\rotatebox{15.176442}{\makebox(0,0)[lt]{\lineheight{1.25}\smash{\begin{tabular}[t]{l}     \end{tabular}}}}}%
    \put(0,0){\includegraphics[width=\unitlength,page=3]{no_skid.pdf}}%
    \put(0.94797922,0.00846375){\color[rgb]{0.03921569,0,0}\rotatebox{0.40487597}{\makebox(0,0)[lt]{\lineheight{1.25}\smash{\begin{tabular}[t]{l}\s$b(\G)$ \end{tabular}}}}}%
    \put(0,0){\includegraphics[width=\unitlength,page=4]{no_skid.pdf}}%
  \end{picture}%
\endgroup%

\caption{The front ($\G$) and back ($\g$) bicycle tracks and the circle map $b(\G)$ generated by $\G$.}\label{fig:tracks}
\vspace{-1em}
\end{figure}

Given an oriented  smoothly immersed front track $\G$ (not necessarily closed), the associated {\it bicycling transport} (with bike length 1 throughout this article, except the last section) is  a circle map 
associating to an initial orientation of the bike at the  initial point of $\G$ its  terminal orientation after riding along $\G$; we identify the circles of the  initial and the terminal positions of the bicycle by parallel translation.
 If one parametrizes the front track by $\G:[t_0,t_1]\to \R^2$, and  the back track by $\g(t)=\Gamma(t) + (\cos\theta(t), \sin\theta(t)),$ so that 
$|\G(t)-\g(t)|=1$ for all $t\in[a,b]$, then the no-skid condition, $\g'\| (\g-\G)$, is given by the {\em bicycling equation}: 
\be\label{eq:bei}
\theta'(t)=X'(t)\sin\theta(t)-Y'(t)\cos\theta(t), \quad \mbox{where } \G(t)=(X(t), Y(t)). 
\ee
The associated bicycling transport 
$$b(\G):S^1\to S^1$$
 is the circle map $e^{i\theta(t_0)}\mapsto e^{i\theta(t_1)},$ where $\theta(t)$ is a solution of \eqref{eq:bei}. See Fig.~\ref{fig:tracks} (right).

In fact, as R. Foote found in his 1989 pioneering article on the subject  \cite{F}, the bicycle equation \eqref{eq:bei}  arises from  a certain linear connection, 
associating to $\G(t)$   the  linear system 
\be\label{eq:foote}
\y'=A(t)\y, \ \mbox{ where } A(t)=-{1\over 2}\mat{X'&Y' \\ Y'&-X'} \mbox{ and } \y(t)\in \R^2.
\ee 

\setlength{\textfloatsep}{0pt} 

\begin{wrapfigure}{r}{0.18\textwidth}
\vspace{-1em}
\def\svgwidth{.18\textwidth}
\begingroup%
  \makeatletter%
  \providecommand\color[2][]{%
    \errmessage{(Inkscape) Color is used for the text in Inkscape, but the package 'color.sty' is not loaded}%
    \renewcommand\color[2][]{}%
  }%
  \providecommand\transparent[1]{%
    \errmessage{(Inkscape) Transparency is used (non-zero) for the text in Inkscape, but the package 'transparent.sty' is not loaded}%
    \renewcommand\transparent[1]{}%
  }%
  \providecommand\rotatebox[2]{#2}%
  \newcommand*\fsize{\dimexpr\f@size pt\relax}%
  \newcommand*\lineheight[1]{\fontsize{\fsize}{#1\fsize}\selectfont}%
  \ifx\svgwidth\undefined%
    \setlength{\unitlength}{188.55212114bp}%
    \ifx\svgscale\undefined%
      \relax%
    \else%
      \setlength{\unitlength}{\unitlength * \real{\svgscale}}%
    \fi%
  \else%
    \setlength{\unitlength}{\svgwidth}%
  \fi%
  \global\let\svgwidth\undefined%
  \global\let\svgscale\undefined%
  \makeatother%
  \begin{picture}(1,1.27000685)%
    \lineheight{1}%
    \setlength\tabcolsep{0pt}%
    \put(0,0){\includegraphics[width=\unitlength,page=1]{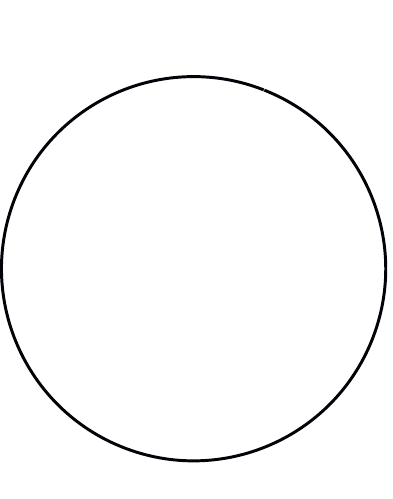}}%
    \put(0.60700216,0.61410694){\color[rgb]{0,0,0}\makebox(0,0)[lt]{\lineheight{1.25}\smash{\begin{tabular}[t]{l}\s$\theta$\end{tabular}}}}%
    \put(0.3659156,0.8583664){\color[rgb]{0,0,0}\makebox(0,0)[lt]{\lineheight{1.25}\smash{\begin{tabular}[t]{l}\s$p$\end{tabular}}}}%
    \put(0,0){\includegraphics[width=\unitlength,page=2]{stereo.pdf}}%
    \put(0.54686807,1.1898657){\color[rgb]{0,0,0}\makebox(0,0)[lt]{\lineheight{1.25}\smash{\begin{tabular}[t]{l}\s$\RP^1$\end{tabular}}}}%
    \put(0.82932772,1.02697204){\color[rgb]{0,0,0}\makebox(0,0)[lt]{\lineheight{1.25}\smash{\begin{tabular}[t]{l}\s$e^{i\theta}$\end{tabular}}}}%
  \end{picture}%
\endgroup%
\end{wrapfigure}
 The corresponding  parallel transport along $\G$ is the  linear map  $\R^2\to\R^2,$ mapping $\y(t_0)\mapsto \y(t_1)$, where $\y(t)$ is a solution to equation \eqref{eq:foote}. This linear map induces a circle map $S^1\to  S^1$, as follows. 
 
Identify the unit circle $S^1$ with the projective line $\RP^1$ via the stereographic projection, $e^{i\theta}\mapsto p=\tan(\theta/2)$. Then the parallel transport of \eqref{eq:foote} induces a projective map $\RP^1\to \RP^1$, i.e., a  {\em Möbius transformation}, $p\mapsto (ap+b)/(cp+d)$, an element  of $\rm{PSL}_2(\R)$. 

\begin{theorem}[R.~Foote \cite{F}]\label{thm:foote} The bicycle transport $b(\G)$ of equation \eqref{eq:bei} is a M\"obius transformation. In fact, it is the projectivized parallel transport of equation \eqref{eq:foote}.
\end{theorem}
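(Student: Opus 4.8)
The plan is to show that the substitution $p = \tan(\theta/2)$ conjugates the nonlinear bicycling equation \eqref{eq:bei} into the projectivization of the linear system \eqref{eq:foote}, so that the two flows on $\RP^1 \cong S^1$ coincide. First I would compute the induced dynamics of $p$ on $\RP^1$ coming from \eqref{eq:foote}. Writing $y = (y_1, y_2)^\top$ and $p = y_1/y_2$ (or $y_2/y_1$, to be fixed so that it matches the stereographic convention $e^{i\theta}\mapsto \tan(\theta/2)$), a standard calculation gives the Riccati equation $p' = (\text{ratio of linear forms in } A)$; explicitly, if $A = \smatrix{a & b \\ c & d}$ then $p' = b + (a-d)p - c\,p^2$. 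Substituting $a = -X'/2$, $b = -Y'/2$, $c = -Y'/2$, $d = X'/2$ yields $p' = -\tfrac{1}{2}Y' - X' p + \tfrac{1}{2}Y' p^2$.

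Second, I would independently compute $p'$ from the bicycling equation \eqref{eq:bei} via the chain rule: from $p = \tan(\theta/2)$ we get $p' = \tfrac{1}{2}\sec^2(\theta/2)\,\theta' = \tfrac{1}{2}(1 + p^2)\,\theta'$. Now I substitute the half-angle identities $\sin\theta = \dfrac{2p}{1+p^2}$ and $\cos\theta = \dfrac{1-p^2}{1+p^2}$ into \eqref{eq:bei} to express $\theta'$ in terms of $p$, and multiply by $\tfrac{1}{2}(1+p^2)$. The key point is that the factor $1+p^2$ cancels, leaving a polynomial (quadratic) in $p$, which I then check agrees termwise with the Riccati equation obtained in the first step. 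This is the crux of the argument, and it is really just bookkeeping with the half-angle formulas; I expect no genuine obstacle, only the need to be careful about sign conventions and about which coordinate ($y_1/y_2$ versus $y_2/y_1$) realizes the stereographic identification.

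Having matched the two vector fields on $\RP^1$, I would conclude as follows. Solutions of \eqref{eq:foote} are mapped by $y \mapsto [y] \in \RP^1$ to solutions of the Riccati equation, because $\RP^1$ is the quotient of $\R^2 \setminus \{0\}$ by scaling and the linear flow descends to the projective flow. By the computation above this projective flow is exactly the flow of \eqref{eq:bei} read in the coordinate $p = \tan(\theta/2)$. Hence the time-$(t_0 \to t_1)$ map of \eqref{eq:bei}, which by definition is $b(\G)$ after the identification $e^{i\theta}\leftrightarrow p$, coincides with the projectivization of the linear parallel transport map $y(t_0) \mapsto y(t_1)$ of \eqref{eq:foote}. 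Since that linear map lies in $\GL_2(\R)$ — in fact in $\SL_2(\R)$, because $\tr A(t) = -\tfrac{1}{2}X' + \tfrac{1}{2}X' = 0$ forces the Wronskian to be constant — its projectivization is a well-defined element of $\PSL_2(\R)$, i.e. a Möbius transformation. This proves both assertions of the theorem: $b(\G)$ is a Möbius transformation, and it is precisely the projectivized parallel transport of \eqref{eq:foote}.

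The only mild subtlety worth flagging is the global consistency of the chart: the stereographic coordinate $p = \tan(\theta/2)$ misses the point $\theta = \pi$, so strictly speaking one should either phrase the matching of flows on the open dense chart and invoke continuity/density to extend the identity of the two circle maps to all of $S^1 = \RP^1$, or redo the one-line Riccati computation in the complementary chart $q = \cot(\theta/2) = 1/p$ and observe it patches up. Either way this is routine, so the entire proof reduces to the single half-angle computation in the second step.
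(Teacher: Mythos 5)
Your proposal is correct and takes essentially the same route as the paper: the paper also proves Foote's theorem by exhibiting the bicycle equation \eqref{eq:bei} as the projectivization of the linear system \eqref{eq:foote}, only it performs the (likewise routine, and there omitted) verification in the angle coordinate via the map $z\mapsto z^2/|z|^2$ on $\C^*$ rather than in the affine Riccati coordinate $p=\tan(\theta/2)$. For the record, the convention you left open resolves to $p=y_2/y_1$ (the slope of the line $[y_1:y_2]$): with that choice the Riccati equation becomes $p'=-\tfrac12 Y'+X'p+\tfrac12 Y'p^2$ and matches your half-angle computation, whereas $p=y_1/y_2$ flips the sign of the linear term.
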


See Section \ref{sec:ricatti} below for a proof. 

\smallskip

 For a {\em closed} front track,  $b(\G)$ is the {\em bicycle monodromy} (or holonomy) of $\G$, well-defined up to conjugation, depending on the initial point.   A non-trivial M\"obius  transformation is either hyperbolic, elliptic or parabolic, and is hyperbolic if and only if its action on $S^1$ has exactly two fixed points, one attracting  and one repelling. Accordingly, hyperbolic monodromy means that starting at any
 point along $\G$ there are exactly two initial orientations of the bicycle frame at this point that result in a closed rear track upon riding  once around $\G.$ Furthermore, for every other initial orientation, riding the bike along $\G$ many times, the rear  track converges    in forward time to the ``attracting'' closed back track, and to the ``repelling" one in backward time.  See Fig.~\ref{fig:hm}c.

\begin{figure}[h]
\centering
\def\svgwidth{1\textwidth}
\begingroup%
  \makeatletter%
  \providecommand\color[2][]{%
    \errmessage{(Inkscape) Color is used for the text in Inkscape, but the package 'color.sty' is not loaded}%
    \renewcommand\color[2][]{}%
  }%
  \providecommand\transparent[1]{%
    \errmessage{(Inkscape) Transparency is used (non-zero) for the text in Inkscape, but the package 'transparent.sty' is not loaded}%
    \renewcommand\transparent[1]{}%
  }%
  \providecommand\rotatebox[2]{#2}%
  \newcommand*\fsize{\dimexpr\f@size pt\relax}%
  \newcommand*\lineheight[1]{\fontsize{\fsize}{#1\fsize}\selectfont}%
  \ifx\svgwidth\undefined%
    \setlength{\unitlength}{646.69299316bp}%
    \ifx\svgscale\undefined%
      \relax%
    \else%
      \setlength{\unitlength}{\unitlength * \real{\svgscale}}%
    \fi%
  \else%
    \setlength{\unitlength}{\svgwidth}%
  \fi%
  \global\let\svgwidth\undefined%
  \global\let\svgscale\undefined%
  \makeatother%
  \begin{picture}(1,0.25788559)%
    \lineheight{1}%
    \setlength\tabcolsep{0pt}%
    \put(0,0){\includegraphics[width=\unitlength,page=1]{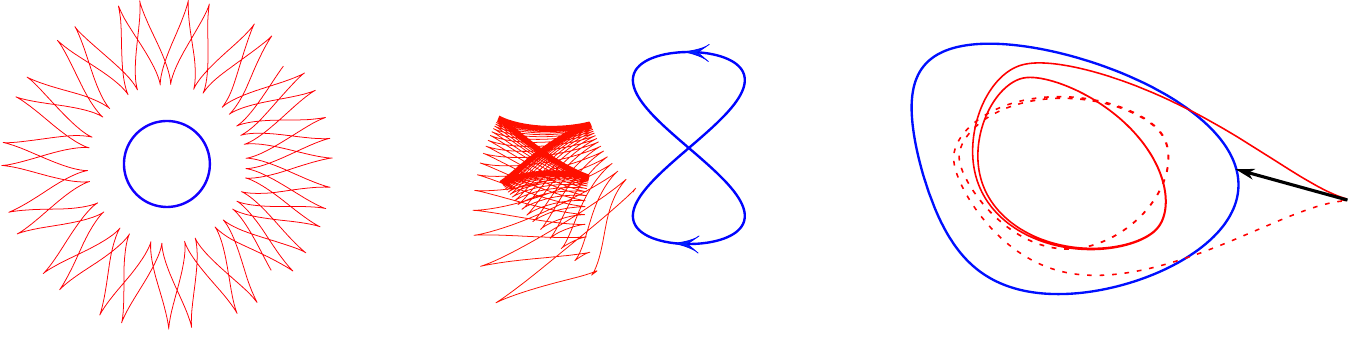}}%
    \put(-0.00001416,0.00376435){\color[rgb]{0.04313725,0.02352941,0.02352941}\makebox(0,0)[lt]{\lineheight{1.25}\smash{\begin{tabular}[t]{l}\s(a)\end{tabular}}}}%
    \put(0.46166044,0.00376435){\color[rgb]{0.04313725,0.02352941,0.02352941}\makebox(0,0)[lt]{\lineheight{1.25}\smash{\begin{tabular}[t]{l}\s(b)\end{tabular}}}}%
    \put(0.78498518,0.00206783){\color[rgb]{0.04313725,0.02352941,0.02352941}\makebox(0,0)[lt]{\lineheight{1.25}\smash{\begin{tabular}[t]{l}\s(c)\end{tabular}}}}%
  \end{picture}%
\endgroup%

\caption{Bicycle monodromies: (a) elliptic; (b) and (c) hyperbolic. }\label{fig:hm}

\end{figure}

The {\em Menzin Conjecture}, dating from 1906 \cite{M}, states that {\em  $b(\G)$ is hyperbolic if $\G$ is a closed simple front track $\G$ enclosing an area   $A(\G)>\pi$}. Menzin studied planimeters, the mechanical devices to measure areas of plane domains. The mathematical model of one of them,  the hatchet planimeter, coincides with our bicycle model. 
Here is a quotation from \cite{M}:
\begin{quote}
\itshape
A curious property of the instrument was noticed while using it on very large areas. If the average line across the area is long in comparison with the length of arm, and the tracing point is dragged continuously around the contour, the tractrix will approach, asymptotically, a limiting closed curve. From purely empirical observations, it seems that this effect can be obtained so long as the length of arm does not exceed the radius of a circle of area equal to the area of the base curve.
\end{quote}
 This was proved in 2009  under the assumption that $\G$ is convex \cite{LT}. The non-convex case is still open, as far as we know. In Section \ref{sec:examples} below we show via some simple examples that the  condition $A(\G)>\pi$ is not necessary for hyperbolicity of $b(\G)$.

\paragraph{Statement of results.} 

We prove two related  results for closed $\G$. The first is another  sufficient condition on $\G$ implying   hyperbolicity of $b(\G)$, which applies to a non-convex or even non-simple $\G$. The second is a necessary condition for hyperbolicity  of $b(\G)$ for convex $\G$. 

\begin{theorem}\label{thm:suf}If an  immersed  closed smooth plane curve has  curvature  $|\kappa|\leq 1$ pointwise, but $|\kappa|$ is not identically 1,  then its bicycle monodromy is hyperbolic. 
\end{theorem}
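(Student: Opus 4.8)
The plan is to conjugate Foote's linear system \eqref{eq:foote} into a gauge in which an explicit indefinite quadratic form is monotone along solutions, and then to read off hyperbolicity of the monodromy purely from that monotonicity.

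First I would pass to the arc-length parametrization $s\in\R/L\Z$ (legitimate, since the bicycle monodromy is invariant under orientation-preserving reparametrization of $\G$), write the unit tangent as $\G'(s)=(\cos\alpha(s),\sin\alpha(s))$ so that $\kappa=\alpha'$, and perform the gauge transformation $y=\rho(\alpha(s)/2)\,z$, where $\rho(\phi)$ is the rotation by angle $\phi$. Using $\rho(\phi)\smatrix{1&0\\0&-1}\rho(\phi)^{-1}=\smatrix{\cos2\phi&\sin2\phi\\ \sin2\phi&-\cos2\phi}$, a short computation rewrites \eqref{eq:foote} as the ``hyperbolic development'' system
\be\label{eq:Bsys}
z'=B(s)\,z,\qquad B(s)=-\tfrac12\mat{1&-\kappa(s)\\ \kappa(s)&-1}\in\slt ,
\ee
whose coefficient matrix is a hyperbolic, parabolic, or elliptic element of $\slt$ according as $|\kappa(s)|<1$, $=1$, or $>1$. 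Since $\alpha(L)-\alpha(0)\in2\pi\Z$, the matrices $\rho(\alpha(0)/2)$ and $\rho(\alpha(L)/2)$ agree up to sign, so by Theorem \ref{thm:foote} the monodromy $\mathcal N$ of \eqref{eq:Bsys} (which lies in $\SLt$, as $\tr B\equiv0$) is conjugate, as a M\"obius transformation, to $b(\G)$. It therefore suffices to prove that $\mathcal N$ (equivalently, its image in $\PSLt$) is hyperbolic.

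The crux is the observation that the nondegenerate indefinite form $Q(z)=z_1^2-z_2^2$ is monotone along solutions of \eqref{eq:Bsys}: one computes $\tfrac{d}{ds}Q(z(s))=-\bigl(z_1^2-2\kappa z_1z_2+z_2^2\bigr)$, and the form $z_1^2-2\kappa z_1z_2+z_2^2$ (matrix $\smatrix{1&-\kappa\\ -\kappa&1}$, trace $2$ and determinant $1-\kappa^2$) is positive semidefinite when $|\kappa|\le1$ and positive definite when $|\kappa|<1$. Thus $Q(z(s))$ is non-increasing under the hypothesis $|\kappa|\le1$; and since $|\kappa|$ is not identically $1$ there is an open subarc of $\R/L\Z$ on which $|\kappa|<1$, where $Q(z(s))$ strictly decreases along any nonzero solution (a solution of \eqref{eq:Bsys} that is not identically zero is nowhere zero, by uniqueness). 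Integrating over one period gives $Q(\mathcal N z)<Q(z)$ for every $z\neq0$. This reflects the hyperbolic-development picture: the developed curve is a unit-speed curve of geodesic curvature $\kappa$ in $\H$, nowhere sharper than a horocycle, and $Q$ --- a quantity attached to a fixed geodesic --- changes steadily along it.

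It remains to prove the linear-algebra lemma: \emph{if $M\in\SLt$ satisfies $Q(Mz)<Q(z)$ for all $z\neq0$, with $Q$ nondegenerate indefinite, then $M$ is hyperbolic.} If not, $M$ has an eigenvalue of modulus $1$. A real such eigenvalue ($\pm1$) has a real eigenvector $v$ with $Q(Mv)=Q(v)$, contradicting strict decrease. Otherwise $M$ is elliptic, hence $M=gRg^{-1}$ with $R$ a Euclidean rotation; then $\widetilde Q:=Q\circ g$ is again nondegenerate indefinite, so in particular non-constant on any Euclidean circle centered at $0$, and $\widetilde Q(Rw)<\widetilde Q(w)$ for all $w\neq0$; fixing $w_0\neq0$, the sequence $\widetilde Q(R^nw_0)$ is strictly decreasing, yet $R^nw_0$ stays on a fixed circle, so that sequence is either eventually periodic (if $R$ has finite order) or dense in a nondegenerate interval (if $R$ is an irrational rotation) --- a contradiction in either case. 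I expect this lemma, and in particular the exclusion of the elliptic case, to be the step needing the most care; the genuinely new ingredient is the monotone form $Q$ produced by the gauge transformation.
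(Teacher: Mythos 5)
Your proposal is correct, but it takes a genuinely different route from the paper. Your gauge transformation $y=\rho(\alpha/2)z$ produces exactly the frame equation of the hyperbolic development, and from that point the paper argues geometrically: it develops $\G$ to a curve $\tG\subset\H$ with the same $L$-periodic curvature (Theorem \ref{thm:mon}), invokes a lemma attributed to Thurston (Lemma \ref{lemma:folk}) that an immersed curve in $\H$ with $|\kappa|\le 1$ is embedded, and then rules out an elliptic period map (finite order would force $\tG$ to close up, an irrational rotation would force it to be dense in an annulus and self-intersect) and a parabolic one (since $\tG$ is not a horocycle, its forward and backward limit points on $\partial\H$ lie on opposite sides of the geodesic perpendicular to $\tG$ at a point, hence are distinct). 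You bypass the development picture and the embeddedness lemma entirely: your computation $\frac{d}{ds}(z_1^2-z_2^2)=-(z_1^2-2\kappa z_1z_2+z_2^2)$ is right, the right-hand side is nonpositive for $|\kappa|\le 1$ and strictly negative on the open subarc where $|\kappa|<1$ (which exists by the hypothesis and continuity), so the monodromy $\mathcal N\in\SLt$ strictly decreases the indefinite form $Q$; and your concluding lemma is correct as argued --- a real eigenvector with eigenvalue $\pm1$ would preserve $Q$, while an elliptic $M$, conjugate over $\R$ to a rotation, would force the nondegenerate indefinite form $Q\circ g$ to be constant on a circle, which is impossible --- this is in essence the invariant-cone criterion for hyperbolicity in $\SL_2(\R)$. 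What your approach buys is a short, self-contained ODE/linear-algebra proof that avoids the embeddedness lemma (which the paper notes is not in the literature and requires a supplied proof); what the paper's approach buys is the geometric information about the development itself (embeddedness, trapping between invariant leaves), which is precisely the machinery reused and refined in the much harder proof of Theorem \ref{thm:nec}.
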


\begin{theorem}\label{thm:nec} A closed convex smooth plane curve with hyperbolic bicycling monodromy  has length   $L>2\pi$. 
\end{theorem}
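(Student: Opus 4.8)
\emph{Proof plan.} The plan is to replace $\G$ by its hyperbolic development, reduce to a scalar Riccati flow on the circle, and detect hyperbolicity from the rotation number of the monodromy of that flow, with one delicate case to be closed by hand.

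Parametrize $\G$ by arclength $s\in[0,L]$; by convexity the curvature $\kappa(s)$ is everywhere positive, and since $\G$ has turning number $1$ the total-turning map $\beta(s)=\int_0^s\kappa$ is a diffeomorphism $[0,L]\to[0,2\pi]$. Put $q(\beta)=1/(2\kappa)>0$, so $\d s=2q\,\d\beta$. The bicycle monodromy of $\G$ is conjugate in $\PSLt$ to the holonomy of the hyperbolic development $\widetilde\G\subset\H$ --- the unit-speed curve with geodesic curvature $\kappa$ --- and, reparametrizing by $\beta$ and conjugating by the rotation of angle $\beta/2$, this holonomy equals $Y(2\pi)$, where $Y'=\bigl(\tfrac12 J+q(\beta)K\bigr)Y$, $Y(0)=I$, with $J\in\slt$ an infinitesimal rotation (normalized so that it moves the circle at infinity at unit speed) and $K\in\slt$ an infinitesimal boost. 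Closure of $\G$ amounts to the two scalar identities $\int_0^{2\pi}q\,\d\beta=L/2$ and $\int_0^{2\pi}q(\beta)\,e^{i\beta}\,\d\beta=0$ (the first Fourier coefficient of $q$ vanishes). Passing to the action on the circle at infinity, the monodromy circle map is the time-$2\pi$ map of $\dot\Phi=1+2q(\beta)\sin\Phi$ on $\R/2\pi\Z$, and along any of its orbits the norm of the corresponding linear solution obeys $\tfrac{\d}{\d\beta}\log|Yv|=-q(\beta)\cos\Phi$.

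Consequently a $2\pi$-periodic orbit $\Phi_*$ of the flow is precisely a fixed point of the monodromy, with multiplier $\exp\bigl(\mp\int_0^{2\pi}q\cos\Phi_*\,\d\beta\bigr)$; so the monodromy is hyperbolic iff the flow has a periodic orbit $\Phi_*$ with $\int_0^{2\pi}q\cos\Phi_*\,\d\beta\ne0$, and elliptic iff it has none. In the model case (the round circle of length $L_0$, so $q\equiv L_0/(4\pi)$) one computes directly that the monodromy is elliptic for $L_0<2\pi$, parabolic for $L_0=2\pi$, and hyperbolic for $L_0>2\pi$ --- e.g. $|\tr|=2\bigl|\cos\bigl(2\pi\sqrt{\tfrac14-\bigl(\tfrac{L_0}{4\pi}\bigr)^2}\,\bigr)\bigr|$ for $L_0\le2\pi$ --- so $L=2\pi$ is the expected threshold. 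For the rotation number $\rho$ of the monodromy: the displacement of any orbit over one period is $2\pi+2\int_0^{2\pi}q\sin\Phi\,\d\beta$, which lies in $[2\pi-L,\,2\pi+L]$ since $|\int_0^{2\pi}q\sin\Phi\,\d\beta|\le\int_0^{2\pi}q\,\d\beta=L/2$; hence $\rho\in[2\pi-L,\,2\pi+L]$. When $L<2\pi$ this interval sits inside $(0,4\pi)$ and meets $2\pi\Z$ only at $2\pi$; when $L=2\pi$ the value $\rho=4\pi$ is impossible (it would force $\sin\Phi\equiv1$ along an orbit, whereas then $\dot\Phi=1+2q>0$) and $\rho=0$ forces, by Poincar\'e's theorem, a fixed point, whence $\sin\Phi_*=-\tfrac{1}{2q(\beta)}$ is constant, so $q\equiv\tfrac12$ and $\G$ is the round unit circle (parabolic monodromy). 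Therefore, if $L\le2\pi$ and the monodromy were hyperbolic, we would necessarily have $\rho=2\pi$: the flow would have a periodic orbit $\Phi_*$ winding exactly once, and then automatically $\int_0^{2\pi}q\sin\Phi_*\,\d\beta=0$.

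What remains --- and this is the crux --- is to rule this out: one must show that for $q>0$ with $\int_0^{2\pi}q\,\d\beta\le\pi$ and $\int_0^{2\pi}q(\beta)e^{i\beta}\,\d\beta=0$, any $2\pi$-periodic orbit $\Phi_*$ of $\dot\Phi=1+2q\sin\Phi$ winding once must also satisfy $\int_0^{2\pi}q\cos\Phi_*\,\d\beta=0$ (so the monodromy is parabolic, not hyperbolic) --- or, better, that no such winding-once orbit exists unless $\G$ is the unit circle. The vanishing first Fourier coefficient of $q$ is essential here: without it (for instance if $q$ is concentrated near a point) a winding-once orbit with $\int q\cos\Phi_*\ne0$ does occur and the monodromy is hyperbolic with $\rho=2\pi$ however small $\int q$ is. I expect the resolution to come from a Sturm-type comparison of $\Phi_*$ with the uniform solution $\Phi=\beta$: writing $\Psi_*=\Phi_*-\beta$ one gets $\dot\Psi_*=2q\sin(\Psi_*+\beta)$ with $q>0$ (so $\dot\Psi_*$ is signed by $\sin\Phi_*$) and $\int_0^{2\pi}q\sin(\Psi_*+\beta)\,\d\beta=0$ for free, and the task is to squeeze out $\int_0^{2\pi}q\cos(\Psi_*+\beta)\,\d\beta=0$; the bound $\int q\le\pi$ is exactly what keeps the perturbation $2q\sin\Phi_*$ from letting $\Phi_*$ fall a full period behind $\beta$ while still carrying a nonzero cosine integral. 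Convexity ($q>0$) is used both to sign $\dot\Psi_*$ and to make $\beta$ a bona fide monotone clock. This estimate is the main obstacle; everything preceding it is essentially bookkeeping.
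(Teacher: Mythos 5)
Your reduction is carried out cleanly up to a point: reparametrizing by the turning angle $\beta$, encoding closure of $\G$ as $\int_0^{2\pi}q\,\d\beta=L/2$ and $\int_0^{2\pi}q e^{i\beta}\d\beta=0$, passing to the circle flow $\dot\Phi=1+2q\sin\Phi$, and using the displacement bound to force a hyperbolic monodromy with $L\le 2\pi$ to have a winding-once periodic orbit with $\int_0^{2\pi}q\sin\Phi_*\,\d\beta=0$ --- all of this is correct bookkeeping (modulo the minor point that ``convex'' only gives $\kappa\ge 0$, so $\beta$ need not be a diffeomorphism without an approximation argument). But the proof stops exactly where the theorem begins: you never prove the statement you yourself identify as ``the crux,'' namely that for $q>0$ with $\int_0^{2\pi}q\,\d\beta\le\pi$ and vanishing first Fourier coefficient, a winding-once periodic orbit must have $\int_0^{2\pi}q\cos\Phi_*\,\d\beta=0$ (or cannot exist away from the round circle). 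The Sturm-type comparison is offered only as an expectation, with no estimate actually carried out, and your own example (with $q$ concentrated at a point) shows the needed inequality is genuinely global --- it must use the Fourier constraint, i.e.\ the closedness of $\G$, in an essential way. So what is missing is not a routine verification but the entire analytic content of Theorem \ref{thm:nec}; as it stands the proposal is a (plausible) reformulation of the theorem, not a proof of it.

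For comparison, the paper closes this gap by a completely different mechanism: it works with the hyperbolic development $\tG$, which by Theorem \ref{thm:mon} has $L$-periodic curvature and is invariant under a hyperbolic isometry $h$; it proves $\tG$ is embedded (the hard step, via a smooth Arm Lemma obtained by approximating the parallel transport along $\G$ by transport along inscribed polygons and invoking the polygonal Arm Lemma), and then applies Bridgeman's Gauss--Bonnet plus hyperbolic isoperimetric argument to the region bounded by $\tG([-nL,nL])$ and its reflection in the chord, letting $n\to\infty$ to get $L>2\pi$. If you want to pursue your route, you would need to supply an argument of comparable strength for the Riccati flow --- for instance a genuine proof that the constraints $\int q\le\pi$, $\hat q(1)=0$, $q>0$ exclude a non-degenerate winding-once periodic orbit --- and that appears to be no easier than the embeddedness-plus-Gauss--Bonnet path the paper takes.
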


\begin{rmrk}
The total curvature of a closed simple curve equals $2\pi$, hence its  average curvature equals $2\pi/L$. Theorem \ref{thm:nec} can be restated as a partial converse to Theorem \ref{thm:suf}: {\it A closed convex smooth plane curve with hyperbolic bicycling monodromy  has average curvature less than $1$.}
\end{rmrk}

In  Section \ref{sec:examples} below we show via  examples that the sufficient condition $|\kappa|\leq 1$ of Theorem \ref{thm:suf} is not necessary  for hyperbolicity of the monodromy, and that the necessary condition $L>2\pi$ of Theorem \ref{thm:nec} is  not sufficient.

\paragraph{Sketch of  the proofs of Theorems \ref{thm:suf} and \ref{thm:nec}.} 

Our main tool in  both theorems is  the ``hyperbolic development" interpretation of the no-skid condition  \eqref{eq:bei}, enabling the  translation of  these theorems to statements about  curves in the hyperbolic plane with periodic curvature function. Here is the main idea.

Given a smoothly parametrized unit speed    front track $\G:\R\to\R^2$, one considers
the flow  $b(t)\in \PSLt$ of the associated bicycling equation, i.e., $b(t)$ is the  Möbius transformation of $S^1$, mapping  $e^{i\theta(0)}\mapsto e^{i\theta(t)}$, where $\theta(t)$ is a solution  of equation \eqref{eq:bei}.

The flow $b(t)$ of the bicycle equation \eqref{eq:bei} appears as a curve in the group $\PSLt$ that starts at the unit element. One has a 2-fold covering $\SLt \to \PSLt$, and such a curve has a unique lifting as a curve in $\SLt$ also starting at the unit element. By Theorem \ref{thm:foote}, the lifted curve is  the flow of equation \eqref{eq:foote}, allowing us to treat the bicycle monodromy as an element of $\SLt$, rather than $\PSLt$, and we do so in many calculations below. 

Next, a {\em hyperbolic development} of $\G$ is a unit speed   parametrized curve in the hyperbolic plane, $\tG:\R\to\H$,with the same geodesic curvature as that of $\G$ at corresponding points,  $\kappa_{\tG}(t)=\kappa_{\G}(t)$ for all $t\in\R$. Note that $\tG$ is unique up to an orientation preserving isometry of $\H$. Let $h(t)\in\PSLt$  be the unique orientation preserving  isometry of $\H$ mapping $(\tG(0),\tG'(0))\mapsto (\tG(t),\tG'(t)).$

\begin{theorem}\label{thm:mon}
For any immersed  front track $\Gamma:\R\to \R^2$, $b(t)$ and $h(t)^{-1}$ coincide, up to a conjugation by a fixed element in  $\PSLt$.  
\end{theorem}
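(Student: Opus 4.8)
The plan is to lift both $b(t)$ and $h(t)$ to curves in $\SLt$ through the identity, to show that each satisfies a linear ODE $\dot M=N(t)M$ whose $\slt$‑valued generator depends, after a fixed normalization, only on the common curvature $\kappa(t)=\kappa_\Gamma(t)=\kappa_{\widetilde\Gamma}(t)$, and then to exhibit a single $t$‑independent inner automorphism carrying one generator to the other. First I would parametrize $\Gamma$ by arclength, write $\Gamma'(t)=(\cos\alpha(t),\sin\alpha(t))$ so that $\kappa=\alpha'$, and note that \eqref{eq:bei} becomes $\theta'=\sin(\theta-\alpha(t))$. By Theorem \ref{thm:foote}, $b(t)=\mathbb P(B(t))$ with $\dot B=A(t)B$, $B(0)=I$, and $A(t)=-\tfrac12\smatrix{\cos\alpha & \sin\alpha\\ \sin\alpha & -\cos\alpha}=-\tfrac12\,\Ad_{R(\alpha/2)}\mathbf b_1$, where $R$ is the rotation subgroup, $\mathbf b_1=\smatrix{1&0\\0&-1}$. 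The substitution $\psi=\theta-\alpha$ (equivalently the time‑dependent gauge change $B(t)=R(\alpha(t)/2)\,\Phi(t)\,R(\alpha(0)/2)^{-1}$) reduces this to $\dot\Phi=\bigl(\tfrac12\mathbf b_1-\tfrac{\kappa(t)}2\mathbf r\bigr)\Phi$, $\Phi(0)=I$, with $\mathbf r=\smatrix{0&1\\-1&0}$; the generator now involves only $\kappa$, and the prefactor $R(\alpha(t)/2)$ absorbs the turning of $\Gamma$.

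Next I would analyze $h(t)$. Normalizing so that the reference frame of $\widetilde\Gamma$ at $t=0$ is the standard frame of $\H$ at $i$, the frame‑bundle lift $F(t)$ of the unit‑speed curve $\widetilde\Gamma$ satisfies $F^{-1}\dot F=\tfrac12\mathbf b_1+\tfrac{\kappa(t)}2\mathbf r$: the $\mathbf b_1$‑term generates the unit‑speed geodesic flow and the $\mathbf r$‑term the rotation of the adapted frame at rate $\kappa$ (the geodesic‑curvature equations of $\H$). Since $h(t)=F(t)F(0)^{-1}$ with $F(0)=I$, this gives $\dot h=h\bigl(\tfrac12\mathbf b_1+\tfrac{\kappa(t)}2\mathbf r\bigr)$, hence $\dot g=-\bigl(\tfrac12\mathbf b_1+\tfrac{\kappa(t)}2\mathbf r\bigr)g$ for $g=h^{-1}$, $g(0)=I$. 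Now I match generators: conjugation by the fixed element $\mathbf r\in\SLt$ sends $\tfrac12\mathbf b_1-\tfrac{\kappa}2\mathbf r$ to $-\tfrac12\mathbf b_1-\tfrac{\kappa}2\mathbf r$, because $\mathbf r\,\mathbf b_1\,\mathbf r^{-1}=-\mathbf b_1$ and $\mathbf r$ commutes with itself. Therefore $t\mapsto\mathbf r\,\Phi(t)\,\mathbf r^{-1}$ and $t\mapsto g(t)=h(t)^{-1}$ solve the same linear ODE with the same value $I$ at $t=0$, so $h(t)^{-1}=\mathbf r\,\Phi(t)\,\mathbf r^{-1}$. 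Since $\widetilde\Gamma$, hence $h(t)$, is well defined only up to a fixed isometry, this is exactly the asserted agreement of $b(t)$ with $h(t)^{-1}$ modulo a fixed conjugation, once the turning prefactor has been accounted for (for a closed front track $\alpha$ increases by a multiple of $2\pi$, so $R(\alpha(L)/2)=\pm R(\alpha(0)/2)$ becomes a fixed rotation and the monodromy $b(L)$ is literally a fixed conjugate of $h(L)^{-1}$); passing to $\PSLt$ then finishes the argument.

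The main obstacle is bookkeeping the conventions so that the two generators really differ by a single $t$‑independent inner automorphism: the exact form of Foote's $A(t)$, the two‑to‑one covering $\SLt\to\PSLt$ with its attendant sign and half‑angle factors, the orientation of $\H$ (which pins down the sign of the $\mathbf r$‑term in the Frenet equations — one orientation yields the required $\Ad_{\mathbf r}$ with $\mathbf r\in\SLt$, the other would only give an orientation‑reversing conjugacy), and the identification of the circle of bicycle directions, normalized by Euclidean parallel transport, with $\RP^1=\partial\H$. Once the normalization making the geodesic generator equal to $\tfrac12\mathbf b_1$ is imposed on both sides, the identification collapses to the single computation $\mathbf r\,\mathbf b_1\,\mathbf r^{-1}=-\mathbf b_1$; the delicate part is carrying out this reduction on each side and, for non‑closed $\Gamma$, tracking the rotation $R(\alpha(t)/2)$ coming from the turning angle.
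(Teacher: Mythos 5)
Your proposal is correct in substance but takes a genuinely different route from the paper. The paper proves the sharper statement (Theorem \ref{thm:dev}(b)) head-on: it defines $\tG(t):=b(t)^{-1}\cdot i$ and verifies by direct computation (explicit coordinates, and a second argument via the Killing fields $v_A$ and $2$-jets) that this curve has unit speed, the prescribed initial direction, and geodesic curvature $-\kappa_\G$, i.e.\ that it \emph{is} a hyperbolic development; part (a) is then read off. You instead compare the two flows at the level of their $\slt$-valued generators: a gauge change by the turning rotation $R(\alpha(t)/2)$ strips Foote's generator in \eqref{eq:be} down to one depending only on $\kappa$, the structure (Frenet) equation of $\H$ produces the analogous generator for $h(t)^{-1}$, and uniqueness of solutions of linear ODEs plus one fixed inner automorphism finishes. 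What your route buys is conceptual transparency (both sides are developments of the same $\kappa$-driven equation) and an exact accounting of where the turning angle enters; what it costs is that the hyperbolic frame equation $F^{-1}\dot F=\tfrac12\mathbf b_1+\tfrac{\kappa}{2}\mathbf r$, which you assert, carries essentially the same content as the curvature computation the paper actually performs, so a complete write-up must verify that normalization. Also, with the standard counterclockwise conventions the reduced Euclidean generator comes out as $-\tfrac12\mathbf b_1+\tfrac{\kappa}{2}\mathbf r$ rather than your $\tfrac12\mathbf b_1-\tfrac{\kappa}{2}\mathbf r$, and then the matching conjugation is by $\mathrm{diag}(1,-1)$, which is orientation-reversing; this is exactly the paper's observation that $b(t)^{-1}\cdot i$ develops $\G$ with $\kappa_{\tG}=-\kappa_\G$, and it is resolved, as you note, by the choice of orientation of $\H$ allowed in Definition \ref{def:hyp_dev}.

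Your care with the prefactor $R(\alpha(t)/2)$ is not pedantry but is where the real content sits. Since $b(t)=R(\alpha(t)/2)\,\Phi(t)\,R(\alpha(0)/2)^{-1}$ while $h(t)^{-1}$ is a \emph{fixed} conjugate of $\Phi(t)$, the two families differ by a genuinely time-dependent rotation, and no single fixed conjugation can relate $b(t)$ and $h(t)^{-1}$ for all intermediate $t$ when $\kappa$ is nonconstant (for the unit circle at half period, $b(\pi)^{-1}$ is hyperbolic while $h(\pi)$ is parabolic). The identity holds precisely at times when $\G'$ has returned to a direction parallel to $\G'(t_0)$ — in particular at every multiple of the period of a closed track, where the prefactor becomes $\pm$ a fixed rotation — and that period-map/monodromy statement is exactly what the paper deduces from Theorem \ref{thm:mon} and uses in the proofs of Theorems \ref{thm:suf} and \ref{thm:nec}. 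So your argument establishes the statement in the form in which it is actually applied, and your explicit bookkeeping makes that scope clear.
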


 This was proved  in \cite[\S2.10]{BLPT}. See Section \ref{sec:prelim}  below for another  proof, as well as a natural extension of this theorem to {\em piecewise smooth} front tracks $\G$ (we will use it for polygons, during  the proof of Theorem \ref{thm:nec}). 

 It follows that if $\G$ is a closed front track of length $L$ with bicycle  monodromy $b(\Gamma)\in\PSLt$, then $\tG$ is not closed in general, but since it has $L$-periodic curvature (same as that of $\G$), it satisfies $\tG(t+L)=h\cdot \tG(t)$ for all $t\in \R$ and some ``period map" $h\in\PSLt$, conjugate to $b(\G)^{-1}.$ In particular, $h$ is hyperbolic if and only if $b(\G)$ is.

Next, we need to translate properties of $\Gamma$ to  its hyperbolic development $\tG$. An  essential ingredient in the proof of Theorem \ref{thm:suf} is  the fact that the condition $|\kappa|\leq 1$ on a curve immersed in $\H$  implies that it is {\em embedded}. This is apparently due to 
W.~Thurston, but is not in the literature, so we supply a proof (whose essence was kindly communicated to us by  Martin Bridgeman). Once we know that $\tG$ is an embedded $h$-invariant curve,  it is fairly  easy to exclude a parabolic or elliptic $h$, so that $h$, and hence $b(\G)$, must be hyperbolic.

\mn 

The proof of Theorem \ref{thm:nec} is considerably more involved and occupies the bulk of the article. We start similarly  by looking at the  hyperbolic development $\tG$ of a given closed front track $\G$, convex with hyperbolic monodromy. Again, it follows from Theorem \ref{thm:mon} 
that $\tG$ has periodic curvature, with hyperbolic period map $h$. We then show, as in the case of Theorem \ref{thm:suf}, that $\tG$ is embedded, but this turns out to be more difficult.  

The key ingredient is a result in comparison geometry, called the ``Arm Lemma" (or perhaps the ``Bow Lemma", by some authors), stating a rather intuitive but not-so-easy to prove fact: given a convex arc in the euclidean plane, with a chord joining its end points (a line segment, the ``string'' of a ``bow"), the chord length increases in a hyperbolic development of the arc. 

This lemma excludes  immediately self intersection of points of $\tG$ which are less than $L$ apart along $\tG$. Then using invariance under a hyperbolic period map $h$, one can exclude arbitrary  self intersections of $\tG$. The problem with this argument is that the version of the Arm Lemma in the literature, while very general regarding ambient spaces, is available  only for polygons and it is not clear how  to use it to pass to the ``continuous limit". 
 
 Here we circumvent this difficulty via a rather lengthy ``bicycling" proof, again using Theorem \ref{thm:mon}. This theorem expresses hyperbolic development of Euclidean curves  in terms of the parallel transport of a certain connection along them ; in general, parallel transport along smooth curves is given as the  limit of  parallel transport along their polygonal approximation. See, for example,  App. 1A of \cite{A}, for this approach applied to the parallel transport of the Levi-Civita connection of a Riemannian surface. 

The next step is to use hyperbolicity of $h$ to prove  ``partial convexity'' of $\tG$, just enough to be able to apply the next step (we are certain full convexity holds, but tried  to avoid lengthening the proof any further). With this information on $\tG$, we apply a clever argument of M. Bridgeman from \cite{B}, using the  Gauss-Bonnet   and  isoperimetric inequalities in $\H$, to conclude that  $L>2\pi.$

\paragraph{Another approach to Theorem \ref{thm:nec}.} 

There is an alternative approach to the proof of Theorem \ref{thm:nec}, very different from the one in Section \ref{sec:nec} and inspired by contact geometry. The configuration space of directed unit segments in the plane is a 3-dimensional manifold, and the no skid constraint defines a contact structure on it. Bicycle motion defines a horizontal (Legendrian) curve in this contact manifold. The front and back tracks are two projections of this curve in the plane. 

If the bicycling monodromy is hyperbolic or parabolic then there exists a closed back track $\g$ (two of them, in the hyperbolic case). Generically, this back track is a cooriented wave front, a curve with a well defined oriented tangent line (given by the bicycle segment) at every point and, possibly, isolated semi-cubic singularities, that happen when the respective Legendrian curve is tangent to the fibers of the projection on the rear end of the bicycle segment. In particular, if $\g$ is oriented, one defines its integer-valued rotation number $\rho(\g)$.

The back track $\g$ uniquely defines the front track $\G$. We prove that $L \ge 2\pi |\rho(\g)|$ (Proposition \ref{prop:ineq}); if $\rho(\g) \neq 0$, this implies the inequality of Theorem \ref{thm:nec}. Based on numerous computer experiments, we conjecture that if $\G$ is smooth and strictly convex, and the bicycling monodromy is hyperbolic or parabolic, then $\rho(\g)=\pm 1$. We were unable to prove this conjecture so far. 

We also present an example of back tracks $\g$ with $\rho(\g) = 0$ such that  the respective front tracks $\G$ have arbitrary small lengths. In this example, the rotation number of $\G$ is zero. One could construct similar examples with arbitrary rotation numbers of the front tracks. 

We also consider variable length $\ell$ of the bicycle segment. Given a closed front track $\G$, the type of the bicycling monodromy depends on $\ell$. In particular, if $\ell \ll 1$, the monodromy is hyperbolic. If $\G$ is a closed strictly convex curve, we conjecture that there exists a number $\ell_0$ such that the monodromy is hyperbolic for $\ell < \ell_0$ and elliptic for $\ell > \ell_0$. This conjecture implies the above stated conjecture about the rotation number being $\pm 1$.

\paragraph{Contents of this article.} We start in Section \ref{sec:examples} with some simple examples  of bicycle monodromy of plane curves (rectangles and ellipses), showing that the sufficient  condition for hyperbolicity of Theorem \ref{thm:suf} is not necessary, and that the  necessary condition of Theorem \ref{thm:nec} is not sufficient.  Section \ref{sec:prelim} establishes the correspondence between bicycle transport and  hyperbolic development, our main tool. Theorem \ref{thm:suf} is proved in Section \ref{sec:suf}  and Theorem \ref{thm:nec} is proved in Section \ref{sec:nec}. In Section \ref{sec:alt} we describe another approach to prove Theorem \ref{thm:nec}, depending on a  conjecture on the rotation number of the closed back  tracks associated to front tracks with hyperbolic monodromy. We formulate  two related conjectures, interesting in their own right, which we hope to return to in future work.

\paragraph{Acknowledgments.} 
We acknowledge correspondence with Martin Bridgeman, who kindly supplied the proof of Lemma \ref{lemma:folk}, as well as useful discussions with  Maxim Arnold, Héctor Chang, Jesús Núñez,  Anton Petrunin and Agustí Reventós. GM and LH acknowledge support of CONAHCYT grant A1-S-45886. GB   acknowledges hospitality of the Toulouse Mathematics Institute during visits in 2023-4. LH is thankful for the hospitality of the Mathematics Department of the University of Santiago de Compostela, while parts of this article were done. ST was supported by NSF grant DMS-2404535 and by  Simons Grant TSM-00007747.

\section{Two examples}\label{sec:examples} 
We look at the bicycle monodromy of two  simple classes of curves: rectangles and ellipses. Rectangles can be studied precisely, ellipses numerically. 
Using these examples,  we verify  that, for  bicycle length  $\ell=1$:   (1) The sufficient conditions  $A>\pi$ (in the Menzin conjecture) and $\kappa\geq 1$ (in Theorem \ref{thm:suf}) are  not necessary  for the hyperbolicity of the bicycle monodromy; (2) the necessary condition $L>2\pi$ of Theorem \ref{thm:nec} is not  sufficient  for hyperbolicity of the bicycle monodromy.

\subsection{Rectangles}
Let $R_{a,b}$ be the rectangular path
$$(0,0)\mapsto (a,0)\mapsto(a,b)\mapsto(0,b)\mapsto (0,0),$$
where $a,b>0,$ and $b(R_{a,b})\in\SLt$ be its bicycle monodromy, based at the origin.

\begin{prop}
$$
\tr\left(b(R_{a,b})\right)=2-\sinh^2(a/2)\sinh^2(b/2).
$$
Thus, for all $a,b>0,$ $b(R_{a,b})$ is hyperbolic, parabolic or elliptic  if and only if\\ 
$\sinh(a/2)\sinh(b/2)>2,$  $=2$  or  $<2$ (respectively). 

\begin{figure}[h]
\centering
\def\svgwidth{.5\textwidth}\import{figures/}{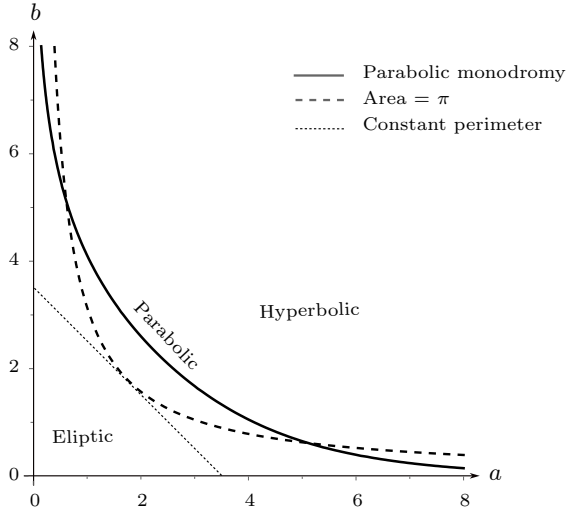}
\caption{bicycling monodromy types of rectangles of size $a\times b$}
\end{figure}
\end{prop}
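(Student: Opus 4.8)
\noindent\emph{Proof idea.} The plan is to compute $b(R_{a,b})$ explicitly from Foote's Theorem~\ref{thm:foote}, which identifies it with the projectivized parallel transport of the linear system \eqref{eq:foote} around the rectangular loop. The structural point is that this transport is multiplicative over the edges of a polygon, with no vertex corrections: the matrix $A(t)$ in \eqref{eq:foote} depends only on the velocity direction, so $A(t)\,\mathrm{d}t = -\tfrac12\smatrix{\mathrm{d}X & \mathrm{d}Y \\ \mathrm{d}Y & -\mathrm{d}X}$ is the pull-back of a fixed $\slt$-valued $1$-form on $\R^2$ (hence parametrization-independent), and a solution of $y'=A(t)y$ stays continuous across jumps of $A$. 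Thus $b(R_{a,b})$ is the ordered product of the four edge transports (the piecewise-smooth version of Theorem~\ref{thm:mon} mentioned after its statement — in contrast to Gauss--Bonnet, the corners contribute nothing). Parametrizing an edge of length $\ell$ and direction $(\cos\alpha,\sin\alpha)$ by arc length, $A$ is the constant $-\tfrac12 M_\alpha$ with $M_\alpha = \smatrix{\cos\alpha & \sin\alpha \\ \sin\alpha & -\cos\alpha}$; since $M_\alpha^2 = I$, the edge transport is the explicit element
\[
\exp\!\bigl(-\tfrac{\ell}{2}M_\alpha\bigr) = \cosh(\ell/2)\,I - \sinh(\ell/2)\,M_\alpha \in \SLt .
\]

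For $R_{a,b}$ based at the origin, the two horizontal edges then give the mutually inverse diagonal matrices $\smatrix{e^{\mp a/2} & 0 \\ 0 & e^{\pm a/2}}$ and the two vertical edges give $\smatrix{\cosh(b/2) & \mp\sinh(b/2) \\ \mp\sinh(b/2) & \cosh(b/2)}$. I would multiply these four in the cyclic order dictated by the loop and take the trace: because the diagonal factors are inverse, the product reduces to two factors after one conjugation and the trace comes out in the shape $2\cosh^2(b/2) - 2\cosh a\,\sinh^2(b/2)$, which the half-angle identities $2\cosh^2(b/2) = 1 + \cosh b$, $2\sinh^2(b/2) = \cosh b - 1$, $\cosh a = 1 + 2\sinh^2(a/2)$ put in the displayed closed form for $\tr b(R_{a,b})$. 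The trichotomy is then immediate: a non-central element of $\SLt$ is hyperbolic, parabolic, or elliptic according as the absolute value of its trace is $>2$, $=2$, or $<2$; since $\tr b(R_{a,b}) \le 2$, with equality only when $a$ or $b$ vanishes, for $a,b>0$ the type is decided by whether $\tr b(R_{a,b})$ lies below, at, or above $-2$, i.e.\ by the size of $\sinh(a/2)\sinh(b/2)$ against the stated bound; and a direct check shows $b(R_{a,b})\ne\pm I$, so the borderline case really is parabolic.

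There is no deep obstacle; the argument is a finite computation once Foote's theorem and its polygonal form are in hand. Three points need care: (i) the absence of a vertex correction should be stated and used explicitly, as it is precisely what turns the monodromy into the bare product of the four edge exponentials; (ii) the sign of the lift $\PSLt\to\SLt$ must be pinned down by lifting the flow $b(t)$ starting from the identity, so that $\tr b(R_{a,b})$ itself — not just its absolute value — is determined, which matters because the trace dips below $-2$ in the hyperbolic range; and (iii) the hyperbolic-trigonometry bookkeeping in collapsing the $2\times 2$ matrix product, where a single sign slip would spoil the clean final formula.
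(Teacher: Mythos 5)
Your proposal follows essentially the same route as the paper: write each edge transport as the exponential of a constant traceless matrix (using $M_\alpha^2=I$), multiply the four factors in the order of the loop with no vertex corrections, take the trace, and read off the type from the trace criterion in $\SLt$; your intermediate value $\tr\,b(R_{a,b})=2\cosh^2(b/2)-2\cosh a\,\sinh^2(b/2)$ agrees with the matrix the paper computes.

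One point does not go through as stated: the half-angle identities applied to that (correct) intermediate expression give
$$\tr\,b(R_{a,b})=1+\cosh a+\cosh b-\cosh a\cosh b=2-(\cosh a-1)(\cosh b-1)=2-4\sinh^2(a/2)\sinh^2(b/2),$$
not the displayed $2-\sinh^2(a/2)\sinh^2(b/2)$; consequently the parabolic locus is $\sinh(a/2)\sinh(b/2)=1$ rather than $2$. So you cannot "put it in the displayed closed form" — the displayed form itself drops a factor of $4$ (the paper's proof makes the same slip in its last equality, and its subsequent corollary, with the parabolic square $a=b=\cosh^{-1}3=\log(3+2\sqrt2)$, is consistent with the threshold $1$, not $2$). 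Carry your own simplification through and state the corrected formula and threshold; everything else in your argument (absence of corner contributions, the choice of lift to $\SLt$ so that the sign of the trace is meaningful, and the check that the borderline case is genuinely parabolic since the off-diagonal entries $\cosh(b/2)\sinh(b/2)(1-e^{\pm a})$ are nonzero) is sound.
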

\begin{proof}
Let $H_a, V_b$ be the bicycle transport along horizontal and vertical segments, east and north, distances $a, b>0$  (respectively). In general, as noted in Theorem \ref{thm:foote} (see also Section \ref{sec:ricatti} below), the bicycle transport  $b(t)$ along a curve $\G(t)=(X(t),Y(t))$ from $0$ to $t$  satisfies 

$$b'=-{1\over 2}\mat{ X'&Y'\\ Y'&-X'}b,\quad b(0)=Id.$$ 
Thus 
\begin{align*}
    H_a&=\exp\left(-{a\over 2}\left(\begin{array}{cc}1&0\\ 0&-1\end{array}\right)\right)=\left(\begin{array}{cc}e^{-a/2}&0\\ 0&e^{a/2}\end{array}\right),\\
 V_b&=\exp\left(-{b\over 2}\left(\begin{array}{cc}0&1\\ 1&0\end{array}\right)\right)=
 \left(\begin{array}{rr}\cosh(b/2)&-\sinh(b/2)\\ -\sinh(b/2)
&\cosh(b/2)\end{array}\right),
\end{align*}
so
\begin{align*}b(R_{a,b})&=V_{-b}H_{-a}V_bH_a=
\frac{1}{2}\left(
\begin{array}{cc}
 1+ e^{-a}+\left(1- e^{-a}\right) \cosh b&  \left(1-e^a\right) \sinh b\\
 \left(1-e^{-a}\right)\sinh b &1+ e^{a}+\left(1- e^{a}\right) \cosh b 
\end{array}
\right).
\end{align*}
Hence
\begin{align*}
\tr(b(R_{a,b}))&=1+\cosh a+\cosh b- \cosh a\cosh b=2-(\cosh a -1 )(\cosh b-1)=\\
&=2-\sinh^2(a/2)\sinh^2(b/2).
\end{align*}

An element  $g\in \SL_2(\R)$ is elliptic, parabolic or hyperbolic if and only if $|\tr(g)|$ is $<,=$ or $ > 2 $ (respectively). From the last displayed formula,  $\tr(b(R_{a,b}))<2$ for all $a,b>0$, $\tr(b(R_{a,b}))=-2$ along  the curve $\sinh(a/2)\sinh(b/2)=2$,  $\tr(b(R_{a,b}))>-2$ below it and $\tr(b(R_{a,b}))<-2$ above it.
\end{proof}

We observe the following about rectangles $R_{a,b}$ with {\em parabolic} monodromy:

\begin{itemize}
\item As $a\to\infty$, $\mbox{area}(R_{a,b})\to 0$;
\item $\mbox{area}(R_{a,b})\leq \mbox{area}(R_{\cosh^{-1}(3),\cosh^{-1}(3)})=(\log(3+2\sqrt{2}))^2\approx 3.1$;
\item $\mbox{perimeter}(R_{a,b})\geq \mbox{perimeter}(R_{\cosh^{-1}(3),\cosh^{-1}(3)})=4(\log(3+2\sqrt{2}))\approx 7$.
\end{itemize}
(These follow directly from $\sinh(a/2)\sinh(b/2)=2$.)
Thus,
\begin{cor}\hfill
\begin{enumerate}[itemsep=-.2em, label={\rm (\arabic*)}]
\item There exist rectangles with hyperbolic monodromy whose area is arbitrarily small.  This shows that the sufficient condition $A(\G)>\pi$  in Menzin conjecture for $b(\G)$ to be hyperbolic is not necessary. Furthermore, no lower bound on the area enclosed by a front track $\G$ can be a necessary condition for hyperbolicity of $b(\G)$.

\item There exist rectangles with elliptic monodromy whose perimeter is arbitrarily big. Thus  the necessary  condition $L>2\pi$ in Theorem 
\ref{thm:nec} is not sufficient for hyperbolic monodromy, nor any lower bound on the perimeter is sufficient. 

\item The sufficient condition $|\kappa|\leq 1$ of Theorem \ref{thm:suf} for hyperbolic monodromy of non circular front tracks is not necessary, since there are rectangles (with unbounded curvature at the corners) with elliptic monodromy. 

\item Menzin's conjecture for rectangles: Any rectangle with area $>(\log(3+2\sqrt{2}))^2 \approx 3.1$, has hyperbolic monodromy.
\item A necessary condition for hyperbolic rectangles: A rectangle with hyperbolic monodromy has perimeter $>4(\log(3+2\sqrt{2}))\approx 7$.
\end{enumerate}
\end{cor}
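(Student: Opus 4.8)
The plan is to read off all five assertions from the Proposition by elementary calculus on the open quadrant $Q=\{(a,b):a,b>0\}$. It is cleanest to work with the trace itself: $\tau(a,b):=\tr\bigl(b(R_{a,b})\bigr)=1+\cosh a+\cosh b-\cosh a\cosh b$, and since $2-\tau=(\cosh a-1)(\cosh b-1)>0$ on $Q$, by the Proposition the monodromy is hyperbolic, parabolic, or elliptic precisely when $\tau<-2$, $\tau=-2$, or $-2<\tau<2$. So $Q$ is split by the smooth arc $\mathcal P=\{\tau=-2\}$ — on which $\cosh b=(\cosh a+3)/(\cosh a-1)$, asymptotic to both coordinate axes — into a hyperbolic region above and an elliptic region below; moreover $\partial_a\tau=\sinh a\,(1-\cosh b)<0$ and likewise $\partial_b\tau<0$ throughout $Q$, while area $=ab$ and perimeter $=2(a+b)$ are each strictly increasing in $a$ and in $b$. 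Everything thus reduces to the behaviour of these two functionals on $\mathcal P$ and on the two regions it bounds.

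The crucial input, recorded in the three displayed observations, is how $ab$ and $a+b$ behave on $\mathcal P$; I would establish them as follows. Along $\mathcal P$, as $a\to\infty$ one has $\cosh b\to1$, hence $b\to0$, so $ab\to0$ while $a+b\to\infty$ (symmetrically as $b\to\infty$): the first observation. For the other two, I locate the extrema of $ab$ and of $a+b$ on $\mathcal P$ by a Lagrange multiplier with constraint $\tau=-2$: from $\nabla\tau=\bigl(\sinh a\,(1-\cosh b),\ \sinh b\,(1-\cosh a)\bigr)$ and the identity $\sinh t/(\cosh t-1)=\coth(t/2)$, the critical-point equation reduces to $a\coth(a/2)=b\coth(b/2)$ for the area and to $\coth(a/2)=\coth(b/2)$ for the perimeter; since $t\mapsto t\coth(t/2)$ (with derivative $(\sinh t-t)/\bigl(2\sinh^2(t/2)\bigr)>0$) and $t\mapsto\coth(t/2)$ are strictly monotone on $(0,\infty)$, in both cases the unique critical point is $a=b$. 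Solving $\tau(a,a)=-2$, i.e.\ $(\cosh a-3)(\cosh a+1)=0$, gives $a=b=\cosh^{-1}3=\log(3+2\sqrt2)$; combined with the endpoint limits just obtained this is the global maximum $(\log(3+2\sqrt2))^2$ of $ab$ and the global minimum $2\log(3+2\sqrt2)$ of $a+b$ on $\mathcal P$.

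The five items now fall out. For (1), with $a$ large, pushing $b$ slightly above its value on $\mathcal P$ lands in the hyperbolic region with $ab$ as small as desired; the infimum of $ab$ over the whole hyperbolic region being $0$, no lower bound on area can be necessary for hyperbolicity. For (2), dually, with $a$ large, dropping $b$ slightly below $\mathcal P$ gives elliptic monodromy with $a+b$, hence perimeter, arbitrarily large, so no lower bound on perimeter can be sufficient. For (3), by the Proposition $R_{3,3}$ has hyperbolic monodromy ($\tau(3,3)<-2$) but, being polygonal, has unbounded curvature — so the condition $|\kappa|\le1$ of Theorem \ref{thm:suf} is not necessary — while $R_{1,1}$ is elliptic, so corner singularities alone do not force hyperbolicity. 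For (4) and (5) I argue by contraposition: if $R_{a,b}$ is not hyperbolic ($\tau\ge-2$), increase $a$ with $b$ fixed until $\tau=-2$ (possible since $\tau$ is continuous, strictly decreasing in $a$, and $\to-\infty$), which only increases $ab$, whence $ab\le\max_{\mathcal P}ab=(\log(3+2\sqrt2))^2$; if $R_{a,b}$ is hyperbolic ($\tau<-2$), decrease $a$ with $b$ fixed until $\tau=-2$, which only decreases $a+b$, whence $a+b>\min_{\mathcal P}(a+b)=2\log(3+2\sqrt2)$, i.e.\ perimeter $>4\log(3+2\sqrt2)$.

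There is no deep obstacle here — it is all one- and two-variable calculus once the Proposition is in hand — but the step demanding care is the extremal analysis of the second paragraph together with the strict-versus-non-strict bookkeeping in (4)--(5): one must verify that the symmetric rectangle is the \emph{global} (not merely critical) extremum on $\mathcal P$, which is precisely why both the monotonicity of $t\coth(t/2)$ and the $a\to\infty$ limits along $\mathcal P$ are used, and one must observe that this extremal rectangle lies on $\mathcal P$ (so is parabolic, not hyperbolic), so that it is attained in the non-strict bound of (4) but excluded from the open hyperbolic region in (5), which is what makes that perimeter bound strict. A shortcut to (5) avoiding the $\mathcal P$-analysis: on any line $a+b=c$ in $Q$ a short computation shows $\tau$ is monotone in $|a-b|$, hence minimized at $a=b=c/2$ with value $1+2\cosh(c/2)-\cosh^2(c/2)$, and this is $\ge-2$ — so the whole line is non-hyperbolic — exactly when $\cosh(c/2)\le3$, i.e.\ $c\le2\log(3+2\sqrt2)$, so a hyperbolic rectangle has $a+b>2\log(3+2\sqrt2)$.
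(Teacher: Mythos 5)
Your proposal is correct, and its skeleton is the paper's: the paper also reduces all five items to the behaviour of area and perimeter on the parabolic locus (its three bulleted observations: area tends to $0$ along that locus, area is maximized and perimeter minimized there at the symmetric rectangle $a=b=\cosh^{-1}3$), and then reads off (1)--(5). The difference is that the paper dismisses those observations with ``these follow directly,'' whereas you actually prove them --- the Lagrange analysis on $\{\tau=-2\}$ with the strict monotonicity of $t\coth(t/2)$ and $\coth(t/2)$, the endpoint limits that promote the unique critical point to a global extremum --- and you make the passage to (4)--(5) explicit via monotone deformation in $a$ at fixed $b$, with the strict-versus-nonstrict bookkeeping (the extremal rectangle is parabolic, hence excluded from the hyperbolic region) handled correctly. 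Your alternative proof of (5), minimizing $\tau$ on each line $a+b=c$ at $a=b$ and noting the minimum is $\ge -2$ exactly when $\cosh(c/2)\le 3$, is a genuinely independent shortcut not in the paper and arguably cleaner than the extremal analysis on the parabolic curve. Two further points in your favour: your constants, derived from $\tr=2-(\cosh a-1)(\cosh b-1)$, reproduce the corollary's $\cosh^{-1}3=\log(3+2\sqrt2)$; since $(\cosh a-1)(\cosh b-1)=4\sinh^2(a/2)\sinh^2(b/2)$, the Proposition's stated threshold $\sinh(a/2)\sinh(b/2)=2$ is inconsistent with its own trace formula (it should be $=1$), so working from the trace, as you do, is the right call. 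And for item (3) you exhibit a hyperbolic rectangle with corners (e.g.\ $R_{3,3}$), which is the logically correct witness that the condition $|\kappa|\le 1$ of Theorem \ref{thm:suf} is not necessary; the paper's wording via elliptic rectangles does not by itself establish non-necessity. The one step you should write out is $ab\to 0$ along the parabolic curve: $b\to 0$ alone does not give it, but the constraint yields $\cosh b-1=4/(\cosh a-1)$, hence $b\sim 4e^{-a/2}$, so $ab\to 0$ (and $a+b\to\infty$) at both ends, which is all your global-extremum and item (1)--(2) arguments need.
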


\subsection{Ellipses}

One obtains similar results (albeit, numerically) for the bicycling monodromy of ellipses with semi-axes $a,b.$ 
In Fig.~\ref{fig:ellipses} we observe  the same trichotomy as before. 
\begin{figure}[h]
\centering
\def\svgwidth{1\textwidth}\import{figures/}{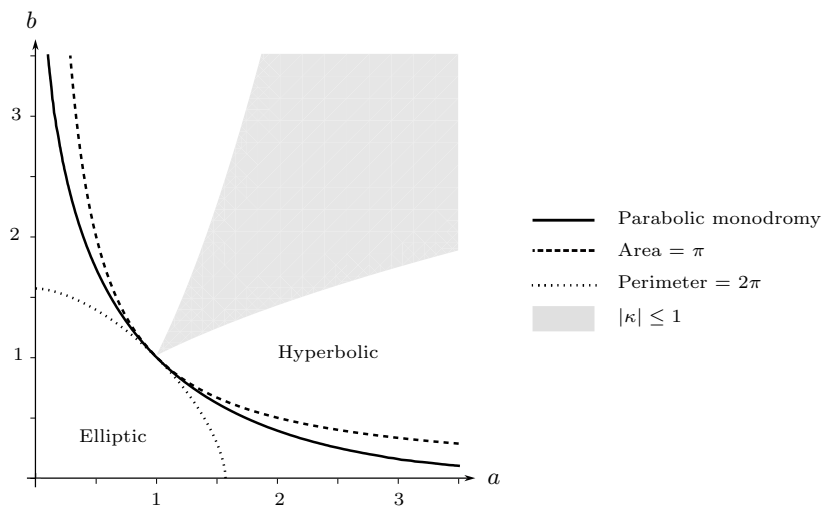}
\caption{Monodromy types for ellipses with semi-axes $a,b$.}\label{fig:ellipses}
\end{figure}

\begin{enumerate}[left=0em, itemsep=-.2em, label={\rm (\arabic*)}]
\item There exist ellipses with hyperbolic  monodromy whose area is arbitrarily small. Thus the sufficient condition $A>\pi$ for hyperbolicity of the bicycle monodromy in Menzin conjecture is not a necessary condition.

\item There exist ellipses with  arbitrarily large perimeter whose monodromy is elliptic. Thus the necessary condition $L>2\pi$ of Theorem \ref{thm:nec} is not sufficient   for  hyperbolicity of the monodromy.

\item There exist ellipses with hyperbolic  monodromy with arbitrarily large  curvature (if $a\geq b$ then the maximum curvature, at $(a,0)$, is $a/b^2$). Thus the sufficient condition $|\kappa|\leq 1$ for hyperbolicity of the bicycle monodromy in Theorem  \ref{thm:suf} is not a necessary condition.  

\item
Menzin's conjecture for ellipses: any ellipse with parabolic monodromy has area $\leq \pi$, with equality at the unit circle; thus any ellipse with area $>\pi$  has hyperbolic monodromy.
\item Every parabolic ellipse has perimeter $\geq 2\pi$ (with equality at the unit circle); thus any ellipse with hyperbolic monodromy has perimeter $>2\pi$.
\end{enumerate}

We note that, unlike the case of rectangles, these items 4 and 5 are  particular cases of the general theorems concerning convex curves.

\section{Preliminaries}\label{sec:prelim}
Here we give a proof of Theorem \ref{thm:foote}, as well as a more detailed formulation of   Theorem \ref{thm:mon}, our main tool in proving Theorems \ref{thm:suf} and \ref{thm:nec}, extended also to piecewise smooth front tracks $\G$. More information can be found in \cite[\S2.10]{BLPT} and \cite[\S4-5]{F}.

\subsection{A linear version of the bicycle equation \eqref{eq:bei}}\label{sec:ricatti} 

Consider a linear system of ODEs, 
\be\label{eq:lin}
\y'=A(t)\y, 
\ee
where 
$$\y(t)\in\R^2,\quad A(t)\in Mat_{2\times 2}(\R).
$$
The flow of this system is the 1-parameter family of linear isomorphisms $g(t)\in\GL_2(\R)$, mapping $\y(0)\mapsto \y(t)$, where $\y(t)$ is a solution of \eqref{eq:lin}. It satisfies 
\be\label{eq:ling}
g'=A(t)g, \quad g(0)=Id. 
\ee
Since   $g(t)$  is linear, it acts  on the projective line (the space of lines through the origin in $\R^2$), 
$$\RP^1:=\left(\R^2\setminus \{ 0\}\right)/\R^*\cong S^1, 
$$ 
mapping $[\y]\mapsto [g(t) \y].$ The resulting  ODE on $\RP^1$ is  called the {\em projectivization} of the linear system \eqref{eq:lin}. 

Explicitly, identify $\R^2=\C$, and define 
\be\label{eq:hopf}
\pi:\C^*\to S^1, \quad \y=y_1+iy_2\mapsto e^{i\theta}={\y^2\over |\y|^2}={y_1^2-y_2^2\over y_1^2+y_2^2}+i {2y_1y_2\over y_1^2+y_2^2}.
\ee
\begin{prop}
Let $\y(t)\in\R^2$ be a non-vanishing solution of \eqref{eq:lin}, 
and let $e^{i\theta(t)}=\pi(\y(t))$, as in equation \eqref{eq:hopf}. Then $\theta(t)$ satisfies 
\be\label{eq:bep}
\theta'=p(t)\sin\theta +q(t)\cos\theta+r(t), 
\ee
where
$$p=d-a, \ q=b+c, \ r=c-b
$$
and $a,b,c,d$ are the entries of $A$, 
$$A(t)=\mat{a(t)&b(t)\\ c(t)&d(t)}.
$$
Conversely, every solution $\theta(t)$ of \eqref{eq:bep} lifts to a  non-vanishing  solution $\y(t)$ of \eqref{eq:lin}, unique up to multiplication by a non-zero constant.
\end{prop}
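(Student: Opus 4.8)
The plan is to verify the claimed ODE \eqref{eq:bep} by a direct differentiation of $e^{i\theta(t)} = \pi(z(t)) = z^2/|z|^2$, and then to handle the converse by an existence-uniqueness argument for ODEs. First I would write $z = x + iy$ with $(x,y)' = A(x,y)^\top$, i.e. $x' = ax + by$, $y' = cx + dy$, and compute $\theta' $ from $e^{i\theta} = (x^2-y^2 + 2ixy)/(x^2+y^2)$. A cleaner route: since $\pi(z) = z^2/|z|^2$ and $\pi(z) = \pi(\lambda z)$ for real $\lambda \neq 0$, we may work projectively; introduce the affine coordinate on $\RP^1$ and recover the Riccati equation, then conjugate to the trigonometric form. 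But the most transparent computation is to differentiate $\theta$ directly: from $\tan\theta = 2xy/(x^2-y^2)$ one gets, after simplification, $\theta' = \mathrm{Im}(\overline{z}^2 \cdot (z^2)')/|z|^4 \cdot(\text{const})$; more precisely, writing $w = z^2$, one has $e^{i\theta} = w/|w|$, so $\theta' = \mathrm{Im}(w'/w) = \mathrm{Im}(2z'/z)$. This is the key identity and it makes the rest routine.

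With $\theta' = \mathrm{Im}(2z'/z)$ in hand, I would substitute $z' = (a x + by) + i(cx+dy)$ and $z = x+iy$, compute $2z'/z = 2z'\bar z/|z|^2$, and take the imaginary part, obtaining a ratio whose numerator is a quadratic form in $(x,y)$ divided by $x^2+y^2$. Re-expressing $x^2 - y^2$, $2xy$, and $x^2+y^2$ via $\cos\theta$, $\sin\theta$, and $1$ (using $\cos\theta = (x^2-y^2)/(x^2+y^2)$, $\sin\theta = 2xy/(x^2+y^2)$), the numerator collapses to $(d-a)\sin\theta + (b+c)\cos\theta + (c-b)$, giving exactly \eqref{eq:bep} with $p = d-a$, $q = b+c$, $r = c-b$. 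I expect this to be the bulk of the work, though it is elementary; one should be slightly careful that $\mathrm{Im}(2z'/z)$ really equals $\theta'$ — this holds because $|z|$-rescaling does not affect $\theta$, so only the argument of $z$ matters, and $\frac{d}{dt}\arg(z) = \mathrm{Im}(z'/z)$, hence $\frac{d}{dt}\arg(z^2) = 2\,\mathrm{Im}(z'/z)$.

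For the converse, given a solution $\theta(t)$ of \eqref{eq:bep} on an interval, I would produce a lift $z(t)$ as follows. Pick any $z_0 \neq 0$ with $\pi(z_0) = e^{i\theta(0)}$; solve the linear system \eqref{eq:lin} with initial condition $z(0) = z_0$; since $A$ is continuous this has a unique solution, and it never vanishes (a linear ODE solution starting nonzero stays nonzero, as $0$ is an equilibrium). Then $\tilde\theta(t) := \arg(z(t)^2)$ satisfies \eqref{eq:bep} by the forward direction and agrees with $\theta$ at $t=0$; by uniqueness for the scalar ODE \eqref{eq:bep} (its right-hand side is smooth in $\theta$ and continuous in $t$, hence Lipschitz in $\theta$ locally), $\tilde\theta \equiv \theta$, so $\pi(z(t)) = e^{i\theta(t)}$ for all $t$. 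Uniqueness of the lift up to a nonzero scalar: if $z_1, z_2$ are two lifts, then $\pi(z_1(0)) = \pi(z_2(0))$ forces $z_1(0) = \lambda z_2(0)$ for some real $\lambda \neq 0$ (since $\pi(z) = \pi(z') \iff z' \in \R^* z$), and then $z_1$ and $\lambda z_2$ are two solutions of the linear system with the same initial value, hence equal by uniqueness.

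The main obstacle is purely bookkeeping in the forward computation — making sure the quadratic form in $(x,y)$ is grouped correctly into the coefficients of $\sin\theta$, $\cos\theta$, and $1$ — together with the (minor) subtlety that $\pi$ is a \emph{double} cover of $S^1$ by $\RP^1$, so one must track the factor of $2$ and confirm it does not spoil the identification of solutions of \eqref{eq:bep} with lines $[z]$; this is exactly why the formula reads $e^{i\theta} = z^2/|z|^2$ rather than $z/|z|$, and why the passage $\theta \mapsto z$ is only defined up to sign of $z$ at the level of $\RP^1$, consistent with the stated ``unique up to multiplication by a non-zero constant.''
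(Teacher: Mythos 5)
Your proposal is correct and supplies exactly the ``simple verification'' that the paper omits: the identity $\theta'=2\,\mathrm{Im}(z'/z)$ gives $\theta'=2\bigl(cx^2+(d-a)xy-by^2\bigr)/(x^2+y^2)$, which regroups as $(d-a)\sin\theta+(b+c)\cos\theta+(c-b)$, and your existence/uniqueness argument for the converse (nonvanishing of a nonzero linear-ODE solution, plus uniqueness for the scalar equation and for the linear system) is the standard and intended one. No gaps.
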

The proof is via a simple verification (omitted).

\mn 

Theorem \ref{thm:foote} follows from this proposition:  the bicycle equation \eqref{eq:bei}, associated with a front track $\G:[t_0, t_1]\to\R^2$,  is the projectivization  of  the linear system 
\be\label{eq:be}
\y'=A(t)\y,  \ \mbox{where}\ A=-{1\over 2}\left(\begin{array}{rr}X'&Y'\\Y'&-X'\end{array}\right) \mbox{ and } 
\G'=(X', Y').
\ee


\subsection{Hyperbolic development }
\begin{definition}\label{def:hyp_dev}
Let $\G:[t_0, t_1]\to\R^2$ be  a unit speed smooth immersion ($C^2$ is enough) and $\H$ the hyperbolic plane, of constant curvature $-1$. 
A  {\em hyperbolic development} of $\G$ is a unit speed smooth  immersion 
$\tG:[t_0, t_1]\to\H$ with  the same geodesic curvature as that of $\G$:
$$|\G'(t)|=|\tG'(t)|_{\H}=1, \ \kappa_{\tG}(t)=\kappa_\G(t)\ \mbox{ for all } t\in[t_0, t_1].
$$  
\end{definition}

Note that the {\em sign} of the geodesic curvature of an oriented curve on a surface depends on an orientation of the surface. Thus for this definition to make sense both $\R^2$ and $\H$ are assumed oriented.

Clearly, $\tG$ is determined by its initial conditions $(\tG(t_0), \tG'(t_0))$, 
and any two developments  differ by the orientation preserving  isometry of $\H$ that maps the initial conditions of one  to the other.

\begin{theorem}\label{thm:dev}
Let $\G:[t_0, t_1]\to\R^2$ be a unit speed smoothly  parametrized curve in $\R^2$ and $b:[t_0, t_1]\to\SLt$ the flow of the associated equation \eqref{eq:be}. 
\begin{enumerate}[left=0em,itemsep=0em, label={\rm (\alph*)}]
\item
Let $\tG:[t_0, t_1]\to\H$ be a hyperbolic development of $\G$ and   $h:[t_0, t_1]\to\SLt$
 the associated (lifted) curve of orientation preserving hyperbolic isometries, such that  $h(t)$ maps  $(\tG(t_0), \tG'(t_0))\mapsto (\tG(t), \tG'(t))$, $t\in[t_0,t_1]$,   and $h(t_0)=Id.$ Then  $h(t)$ coincides with $b(t)^{-1},$ up to an overall conjugation by an element of $\SLt$. 

\item More precisely, in the upper half-plane model of $\H$, $\tG(t):=b(t)^{-1}\cdot i$ is such that $\tG(t_0)=i,  \tG'(t_0)=i\overline{\G'(t_0)},$ and $\kappa_{\tG}=-\kappa_{\G}$. That is, $\tG$ is a development of $\G$ with the orientation of $\H$ opposite  to the one induced from its inclusion in $\R^2$.
\end{enumerate} 
\end{theorem}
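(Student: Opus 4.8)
The plan is to verify Theorem~\ref{thm:dev} by a direct computation in the upper half-plane model, exploiting the fact that both sides of the claimed identity are solutions of first-order ODEs with the same initial data, so that it suffices to check they satisfy the same differential equation. Concretely, I would set $\tG(t):=b(t)^{-1}\cdot i$, where $\SLt$ acts on $\H$ by M\"obius transformations, and show that $\tG$ is a unit-speed curve in $\H$ whose geodesic curvature (with the reversed orientation of $\H$) equals $\kappa_\G(t)$. Since a unit-speed curve in $\H$ is determined up to orientation-preserving isometry by its geodesic curvature function together with the initial point and initial velocity, this pins down $\tG$ as \emph{the} hyperbolic development with the stated initial conditions, and part~(a) follows by recording that the isometry $h(t)$ carrying the frame at $t_0$ to the frame at $t$ is then conjugate to $b(t)^{-1}$.

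First I would assemble the kinematics. Writing $b' = A(t)b$ with $A = -\tfrac12\smatrix{X'&Y'\\ Y'&-X'}$ and $X'=\cos\alpha(t)$, $Y'=\sin\alpha(t)$ where $\alpha$ is the turning angle of $\G$ (so $\alpha'=\kappa_\G$), I would differentiate $\tG = b^{-1}\cdot i$ using $(b^{-1})' = -b^{-1}b'b^{-1} = -b^{-1}A$. The velocity of a curve $g(t)\cdot i$ in $\H$ is computed via the derivative of the M\"obius action, so $\tG'(t)$ is $(b^{-1})'$ applied at the point $i$; one then checks $|\tG'(t)|_\H = 1$ by evaluating the matrix $-b^{-1}A b \,(\text{at } i)$ against the hyperbolic metric — here the key algebraic fact is that $A(t)\in\slt$ and that conjugation by $b^{-1}$ preserves the relevant quadratic form, reducing the length computation to the ``constant'' case at $t_0$. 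Next, to extract the geodesic curvature, I would compute the second derivative $\tG''$ (again via the $\slt$-action on $\H$), decompose it into tangential and normal parts relative to $\tG'$, and read off $\kappa_{\tG}$; the turning is governed by the ``rotation part'' of $A(t)$, which is precisely $\tfrac12(c-b)$ in the notation of the Proposition above — but for Foote's matrix $A$ this off-diagonal antisymmetric part, after projectivization, contributes the term matching $\kappa_\G$, with an overall sign flip accounting for the reversed orientation of $\H$ asserted in part~(b).

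The main obstacle I anticipate is bookkeeping the orientation and sign conventions consistently: the sign of geodesic curvature depends on orientations of both $\R^2$ and $\H$, the passage $\SLt\to\PSLt$ and the choice of lift must be tracked, and the Hopf-type map $\pi$ of \eqref{eq:hopf} relating $\R^2\setminus\{0\}$ to $S^1$ squares the phase, which is exactly why $b(t)^{-1}$ rather than $b(t)$ appears and why the statement is about $h(t)$ versus $b(t)^{-1}$. I would handle this by fixing once and for all the identification of the unit tangent bundle of $\H$ with $\PSLt$ (via $g\mapsto (g\cdot i,\, g_*(\text{upward unit vector}))$), checking the claim at $t=t_0$ by hand, and then arguing that both $t\mapsto h(t)$ and $t\mapsto (\text{conjugate of } b(t)^{-1})$ solve the same ODE $\dot g = g\,\xi(t)$ on $\SLt$ for the velocity field $\xi(t)$ built from $\kappa_\G$, whence they agree by uniqueness of ODE solutions. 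Part~(b) is then just the explicit form of this identification with the base frame $(i, i\overline{\G'(t_0)})$, and part~(a) is the conjugation-invariant consequence.
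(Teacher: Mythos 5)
Your plan is essentially the paper's own proof: reduce everything to part (b), set $\tG(t)=b(t)^{-1}\cdot i$, and verify by computation in the upper half-plane that $|\tG'|_\H=1$, $\tG'(t_0)=i\overline{\G'(t_0)}$ and $\kappa_{\tG}=-\kappa_\G$, using the group action to translate the computation back to the base point $i$ (the paper's second argument does exactly this, via the Killing field $v_A$ and the expansion $h(t_*)^{-1}h(t_*+\epsilon)=I-\epsilon A+\tfrac{\epsilon^2}{2}(A^2-A')$, with $A^2=I/4$); part (a) then follows as you say.

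One step of your sketch is concretely wrong, though, and would fail if followed literally: you attribute the turning of $\tG$ to the ``rotation part'' $\tfrac12(c-b)$ of $A(t)$. Foote's matrix $A=-\tfrac12\smatrix{X'&Y'\\ Y'&-X'}$ is \emph{symmetric}, so this antisymmetric part vanishes identically (equivalently, in the Riccati form the coefficient $r=c-b$ is $0$); indeed the one-parameter subgroup generated by a constant $A$ of this form moves $i$ along a geodesic, as it must, since straight lines develop to geodesics. The curvature $\kappa_\G$ enters only through $A'(t)$, i.e.\ through $X'',Y''$ in the $\epsilon^2$ term of the expansion above, and one must also keep the metric correction term $x'/y$ of the half-plane curvature formula; combining these gives $\kappa_{\tG}=Y'X''-X'Y''=-\kappa_\G$. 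Your honest computation of $\tG''$ and its normal component would surface this, so the plan is repairable, but the stated justification for where $\kappa_\G$ comes from is not correct. (A minor bookkeeping slip in the same vein: the velocity of $\tG$ at time $t$ is the pushforward by $b(t)^{-1}$ of $-v_{A(t)}$ evaluated at $i$, equivalently the Killing field of $-\mathrm{Ad}_{b^{-1}}A$ at the point $b^{-1}\cdot i$, not ``at $i$''; unit speed then follows because $b^{-1}$ acts by isometries.)
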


\begin{proof} It is enough to prove (b). If $\G(t)=(X(t), Y(t))$, then its   geodesic curvature  is 
$$\kappa_\G=X'Y''-Y'X''.$$ 
Let 
$$h(t):=b(t)^{-1}=\left(\begin{array}{rr}a(t)&b(t)\\ c(t)&d(t)\end{array}\right)\in\SLt,
$$
where $b(t)$ is the  solution of \eqref{eq:be}. It satisfies 
\be\label{eq:h}h'=-h A(t), \quad h(t_0)=Id.
\ee
Proving (b) then amounts to  verifying that 
\be\label{eq:hs}\tG(t):=h(t)\cdot i= {a(t)i+b(t)\over c(t)i+d(t)}.
\ee 
satisfies
\be\label{eq:claims}
|\tG'|_\H=1,\quad 
\tG'(t_0)=i\overline{\G'(t_0)},  \quad 
\kappa_{\tG}=-\kappa_\G.
\ee
These  can be verified  by a straightforward  calculation  (we used Mathematica). Here are some details. 

Let $\tG(t)=(x(t), y(t))\in\H$ and recall that $\G(t)=(X(t), Y(t)).$ Using $ad-bc=1$ and equations  \eqref{eq:h}-\eqref{eq:hs}, one finds successively
$$y={1\over c^2+d^2}, \ x'+iy'=\frac{i\overline{\G'}}{(ci+d)^2}, \
x''+i y''= { i(c+i d) \overline{\G'}^2\over(c-i d)^3} -  {\overline{i\G''}^2\over (c-i d)^2}. 
$$
The first two formulas of \eqref{eq:claims} follow. A  formula for the geodesic curvature in $\H$ with respect to the standard orientation \cite[Prop 3, p 252]{D},    then gives
\be\label{eq:gc}
\kappa_{\tG}={x'y''-y'x''\over y^2}+{x'\over y}=X'' Y'-X' Y''=-\kappa_\G.
\ee

We now give a second proof of \eqref{eq:claims}, it will be useful later, during the proof of Theorem \ref{thm:dev_broken}. 

\begin{lemma}The  $\SLt$-action on the upper half-plane by M\"obius transformations assigns to every 
$$A=\mat{a&b\\ c& -a}\in \slt$$ the  Killing vector field 
$$v_A:=f\partial_z+\bar f\partial_{\bar z},\ \mbox{  where } f=b + 2 a z - c z^2.
$$ For the $A$ appearing in equation \eqref{eq:be} one has 
$$f={1\over 2}(1-z^2)Y'-zX'.$$
\end{lemma}
This is proved by a simple verification (omitted).

\mn 

Next, to find the speed and geodesic curvature  of $\tG(t)=h(t)\cdot i$ at some  $t=t_*,$ we translate $\tG$ by $h(t_*)^{-1}$  and  study  instead the curve $\g(\epsilon):=h(t_*)^{-1}h(t_*+\epsilon)\cdot i$ at $\epsilon=0$. Using equation \eqref{eq:h} and the above lemma,  one finds
\be\label{eq:inf}
\g'(0)=-v_A(i)= Y'(t_*)\partial_x+X'(t_*)\partial_y.
\ee
This has  norm 1,   which proves the first formula of \eqref{eq:claims}. Taking $t_*=t_0$  proves also the second one.  
To find $\kappa_\g(0)$, we calculate mod $\epsilon^3$, using equation \eqref{eq:h}, 
$$h(t_*)^{-1}h(t_*+\epsilon)=h^{-1}\left(h+\epsilon h'+{\epsilon^2\over 2}h''\right)=
I-\epsilon A+{\epsilon^2\over 2}\left(A^2-A'\right),$$
where we omit  evaluation at $t_*$ throughout. Now $A^2=(X'^2+Y'^2)I/4=I/4,$ so
\begin{align*}\g(\epsilon)&=h(t_*)^{-1}h(t_*+\epsilon)\cdot i=\left[I-\epsilon A+{\epsilon^2\over 2}\left({I\over 4}-A'\right)\right]\cdot i\\
&=i+\epsilon  \left(Y'+ iX'\right)+\frac{1}{2} \epsilon ^2 \left[2 X' Y'+ Y''+i(X'^2-Y'^2+ X'')\right].
\end{align*}
Thus
$$\g'(0)=(x',y')=(Y',X'), \  \g''(0)=(x'',y'')=(2 X' Y'+ Y'', X'^2-Y'^2+ X'').$$
Using the geodesic curvature formula  \eqref{eq:gc}  with $y=1$, one finds from the above
$$ \kappa_\g(0)=x'y''-y'x''+x'=Y'X''-X'Y''=-\kappa_\G,
$$
as needed. This proves Theorem \ref{thm:dev}. 

A less direct proof, using ``hyperbolic rolling,''  is given in  \cite[\S2.10]{BLPT}. 
\end{proof}

\subsection{Piecewise smooth curves}

Let $\G:[t_0, t_2]\to \R^2$ be a continuous and piecewise smooth unit speed immersion, with a single ``corner" at some  $t_1\in (t_0, t_2)$, so that $\G=\G_1*\G_2$ (concatenation), where $\G_1=\G\left|_{[t_0,t_1]}\right.$ and $\G_2=\G\left|_{[t_1,t_2]}\right.$ are smooth, with 
\be\label{eq:pw}
\G_2(t_1)=\G_1(t_1) \ \mbox{and }    \G'_2(t_1)=\rho\,\G'_1(t_1),\ \mbox{ for some  } \rho\in\SO_2.
\ee

A hyperbolic development of such a $\G$ is a continuous and piecewise smooth unit speed $\tG:[t_0,t_2]\to \H$ with the same data:  $\tG=\tG_1*\tG_2$,  where $\tG_1=\tG\left|_{[t_0,t_1]}\right.$ and $\tG_2=\tG\left|_{[t_1,t_2]}\right.$ are smooth developments of $\G_1, \G_2$ (respectively),  with 
$$\tG_2(t_1)=\tG_1(t_1)\ \mbox{and }  \tG_2'(t_1)=\rho\tG_1'(t_1).
$$
 See Fig.~\ref{fig:dev}. 

\begin{figure}[h]
\centering
\def\svgwidth{.7\textwidth}\import{figures/}{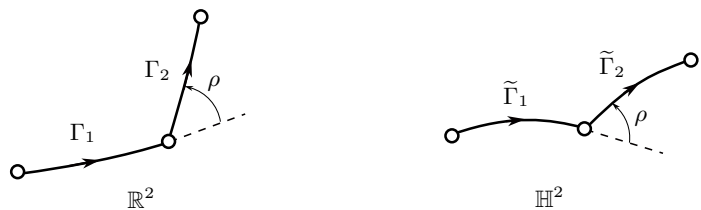}
\caption{Hyperbolic development with a corner.}\label{fig:dev}
\end{figure}

Define the bicycle transport of $\G$ as the composition of the bicycle transports of $\G_1, \G_2$. Namely, let $$b_1:[t_0, t_1]\to\SLt,\ b_2:[t_1, t_2]\to\SLt$$ 
be the bicycle transports of $\G_1, \ \G_2$ (respectively), each satisfying an equation similar to \eqref{eq:be}, 
$$b_1'=A(t)b_1, \ b_1(t_0)=Id, \quad b_2'=A(t)b_2, \ b_2(t_1)=Id, $$
with $A(t)$ piecewise continuous, defined in $[t_0, t_1]$ by $\G_1$  and in $[t_1, t_2]$ by $\G_2$. 
Then let 
\be\label{eq:b}b(t):=\left\{\begin{array}{ll}
b_1(t)&\mbox{if }  t\in[t_0, t_1], \\
& \\
b_2(t)b_1(t_1) &\mbox{if }  t\in[t_1, t_2]. 
\end{array}\right.
\ee

These definitions extend in an obvious manner for an arbitrary piecewise smooth $\G$ with a finite number of   corners. 

\begin{theorem}\label{thm:dev_broken}
\leavevmode
\vspace{-.2em}
Theorem \ref{thm:dev} holds verbatim  in the piecewise smooth case. 
\end{theorem}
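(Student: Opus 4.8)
The plan is to reduce the piecewise smooth case to the already-proven smooth case (Theorem \ref{thm:dev}) by a ``matching'' argument at each corner, so that no new computation is needed beyond tracking how the rotation $\rho$ enters on the two sides. Concretely, it suffices to treat the case of a single corner at $t_1$, since the general case follows by induction on the number of corners. On $[t_0,t_1]$ Theorem \ref{thm:dev} already applies to the smooth arc $\G_1$: the curve $\tG_1(t):=b_1(t)^{-1}\cdot i$ is a development of $\G_1$ with $\kappa_{\tG_1}=-\kappa_{\G_1}$, unit speed, and $\tG_1'(t_0)=i\,\overline{\G_1'(t_0)}$. Similarly, on $[t_1,t_2]$ Theorem \ref{thm:dev} applies to $\G_2$, giving that $\tG_2^{\,0}(t):=b_2(t)^{-1}\cdot i$ is a development of $\G_2$ with $\kappa_{\tG_2^{\,0}}=-\kappa_{\G_2}$ and $(\tG_2^{\,0})'(t_1)=i\,\overline{\G_2'(t_1)}$, based at $\tG_2^{\,0}(t_1)=i$.

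The core step is to verify that the curve produced by $b(t)^{-1}$ on $[t_1,t_2]$, namely $\tG(t):=b(t)^{-1}\cdot i=b_1(t_1)^{-1}b_2(t)^{-1}\cdot i$, is precisely the hyperbolic development of $\G=\G_1*\G_2$ in the sense of the piecewise definition: that is, $\tG=\tG_1*\tG_2$ with $\tG_2$ smooth, $\tG_2(t_1)=\tG_1(t_1)$, $\kappa_{\tG_2}=-\kappa_{\G_2}$, and the corner relation $\tG_2'(t_1)=\rho\,\tG_1'(t_1)$ holding with the \emph{same} $\rho\in\SO_2$ as in \eqref{eq:pw}. First I would note that on $[t_1,t_2]$ we have $\tG(t)=g\cdot \tG_2^{\,0}(t)$ where $g:=b_1(t_1)^{-1}$ is a fixed element of $\SLt$; since an isometry of $\H$ preserves arc length and geodesic curvature, $\tG_2:=\tG|_{[t_1,t_2]}$ is automatically unit speed with $\kappa_{\tG_2}=\kappa_{\tG_2^{\,0}}=-\kappa_{\G_2}$. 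Continuity at the corner is immediate: $\tG_2(t_1)=g\cdot i=b_1(t_1)^{-1}\cdot i=\tG_1(t_1)$. It remains only to check the tangent-vector relation at $t_1$: applying $g_*$ to $(\tG_2^{\,0})'(t_1)=i\,\overline{\G_2'(t_1)}$ and comparing with $\tG_1'(t_1)$ (which, by Theorem \ref{thm:dev}(b) applied along $\G_1$, is $(b_1(t_1)^{-1})_*$ of the vector $i\,\overline{\G_1'(t_1)}$ at $i$), the rotation $\rho$ relating $\G_1'(t_1)$ and $\G_2'(t_1)$ is carried through the common differential $g_*$ and reappears as the rotation relating $\tG_1'(t_1)$ and $\tG_2'(t_1)$. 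Here I would use the infinitesimal description from the lemma in the proof of Theorem \ref{thm:dev}: at a point, the identification of $T\H$ with $\R^2$ coming from $b(t)^{-1}\cdot i$ sends $\G'$ to $i\,\overline{\G'}$, an anti-linear (hence angle-reversing but rotation-equivariant up to a sign that cancels on both sides) map, so that the abstract rotation $\rho$ at the corner is preserved.

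Assembling these observations: $\tG=\tG_1*\tG_2$ is a continuous, piecewise smooth, unit speed curve in $\H$ with $\kappa_{\tG}=-\kappa_\G$ on each smooth piece and with corner data $\tG_2'(t_1)=\rho\,\tG_1'(t_1)$ matching that of $\G$. By the definition of piecewise hyperbolic development given above Figure \ref{fig:dev}, this $\tG$ \emph{is} a hyperbolic development of $\G$ (with the opposite orientation convention), and $h(t)=b(t)^{-1}$ is by construction the associated lifted curve of isometries with $h(t_0)=Id$; uniqueness of the development up to $\Isom^+(\H)$ then yields the statement ``verbatim'' as in Theorem \ref{thm:dev}. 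The induction step over several corners is routine: having matched through the first corner, one repeats the argument on $[t_1,t_2]$ with $b_2(t_1)b_1(t_1)$ in place of the identity, the cumulative conjugating element being absorbed exactly as $g$ was above.

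I expect the only genuinely delicate point to be the bookkeeping of the corner rotation $\rho$ through the anti-linear identification $\G'\mapsto i\,\overline{\G'}$: one must make sure that conjugating by the fixed element $g=b_1(t_1)^{-1}$ does not conjugate $\rho$ into some other rotation, and that the opposite-orientation convention (the sign flip $\kappa_{\tG}=-\kappa_\G$) does not secretly replace $\rho$ by $\rho^{-1}$. The clean way to see this is that both $\tG_1'(t_1)$ and $\tG_2'(t_1)$ are obtained from $\G_1'(t_1)$ and $\G_2'(t_1)$ respectively by one and the same recipe --- push the vector $i\,\overline{(\,\cdot\,)}$ at $i$ forward by $g_*$ --- and this recipe is a fixed $\R$-linear isomorphism $\R^2\to T_{\tG_1(t_1)}\H$ (for a fixed endpoint it does not matter that $v\mapsto i\bar v$ is only anti-linear, since it is applied to both tangent vectors), so it intertwines the $\SO_2$-action on the domain with an $\SO_2$-action on the target and sends $\rho\,\G_1'(t_1)$ to $\rho\,\tG_1'(t_1)$ verbatim. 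Everything else is a transcription of the smooth proof.
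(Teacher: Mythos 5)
Your overall strategy is the same as the paper's: treat a single corner, split $\tG=\tG_1*\tG_2$ with $\tG_1(t)=b_1(t)^{-1}\cdot i$ and $\tG_2(t)=b_1(t_1)^{-1}b_2(t)^{-1}\cdot i$, observe that each piece is a development of $\G_1,\G_2$ by Theorem \ref{thm:dev} (isometries preserve speed and curvature), and then compare the two velocity vectors at the corner by translating the corner to $i$ and using the infinitesimal formula $\g_i'(0)=i\,\overline{\G_i'(t_1)}$. Up to that point your argument matches the paper's proof.

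However, the one step you yourself single out as delicate is resolved incorrectly. The recipe $v\mapsto g_*\bigl(i\bar v\bigr)$, with $g=b_1(t_1)^{-1}$, is \emph{anti-conformal}: $g_*$ acts at a point as multiplication by a complex number, while $v\mapsto i\bar v$ is anti-linear, so from $\G_2'(t_1)=\rho\,\G_1'(t_1)$ one gets $i\,\overline{\G_2'(t_1)}=\bar\rho\; i\,\overline{\G_1'(t_1)}$ and hence $\tG_2'(t_1)=\bar\rho\,\tG_1'(t_1)$ in the standard orientation of the upper half-plane --- not $\rho\,\tG_1'(t_1)$ ``verbatim'' as you assert. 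Your justification (``it does not matter that $v\mapsto i\bar v$ is only anti-linear, since it is applied to both tangent vectors'') is a non sequitur: applying one fixed anti-linear map to $v$ and to $\rho v$ produces vectors related by $\bar\rho$, precisely because conjugating a rotation by an orientation-reversing linear map inverts it. The conclusion of the theorem is nevertheless correct, but for the reason you dismiss: since $b(t)^{-1}\cdot i$ develops $\G$ with respect to the \emph{opposite} orientation of $\H$ (this is exactly the convention behind $\kappa_{\tG}=-\kappa_\G$ in Theorem \ref{thm:dev}(b)), a rotation by $\rho$ measured in that reversed orientation is $\bar\rho$ in standard coordinates, so the corner relation $\tG_2'(t_1)=\bar\rho\,\tG_1'(t_1)$ is the required one. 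This is the remark the paper makes explicitly at the end of its proof; your write-up as it stands asserts the wrong relation in standard coordinates, so the corner-matching step needs this sign bookkeeping corrected. The remaining ingredients (induction over several corners, absorbing the accumulated transport into the fixed isometry) are fine.
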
 

\begin{proof} We treat the case of $\G:[t_0, t_2]\to \R^2$, with  a single  corner at $\G(t_1)$ for some $t_1\in (t_0, t_2),$  satisfying equation \eqref{eq:pw} for some $\rho\in \SO_2.$ The general case is similar and is omitted.  

We thus need to show that 
$$\tG(t):=b(t)^{-1}\cdot i, \ t\in [t_0, t_2],
$$
 is the hyperbolic development of $\G$ with initial conditions  $\tG(t_0)=i, \ \tG'(t_0)=i\overline{\G'(t_0)},$ and where $b(t)$ is defined via equation \eqref{eq:b}. One has  $\tG=\tG_1*\tG_2$, with $\tG_1=\tG\left|_{[t_0,t_1]}\right.$ and $\tG_2=\tG\left|_{[t_1,t_2]}\right.$. That is, 
 \begin{align*}
 \tG_1(t)&=b_1(t)^{-1}\cdot i,\ t\in [t_0, t_1], \\
 \tG_2(t)&=b_1(t_1)^{-1}b_2(t)^{-1}\cdot i,\ t\in[t_1,t_2].
\end{align*}
 Clearly, by Theorem \ref{thm:dev}, these are developments of $\G_1, \G_2$ (respectively). (Note that $\tG_2$ is the image under the hyperbolic isometry $b_1(t_1)^{-1}$ of the development  $b_2(t)^{-1}\cdot i$ of $\G_2$, hence is also a development of $\G_2$). It remains to show that $\tG'_2(t_1)=\rho\tG'_1(t_1).$ 
 Let us translate the common point $\tG_1(t_1)=\tG_2(t_1)$ to  $i$ by $b_1(t_1)$, defining 
 \begin{align*}
 &\g_1(\epsilon):=b_1(t_1)\tG_1(t_1+\epsilon)=b_1(t_1)b_1(t_1+\epsilon)^{-1}\cdot i,\ \mbox{where } \epsilon \leq 0,\\
 &\g_2(\epsilon):=b_1(t_1)\tG_2(t_1+\epsilon)=b_2(t_1+\epsilon)^{-1}\cdot  i,\ \mbox{where } \epsilon\geq 0. 
\end{align*}
 Let $A_i:=A(\G'_i(t_1)),$  $i=1,2.$ Then $\g_i'(0)=-v_{A_i}(i)=i\overline{\G'_i(t_1)}$  (see equation \eqref{eq:inf}). So $\g_2'(0)=\bar\rho\g'_1(0)$. It follows that $\tG'_2(t_1)=\bar\rho\tG'_1(t_1)$ (remember that $b(t)^{-1}\cdot i$ is a development of $\G$ with respect to the {\em  opposite} of the standard orientation). 
\end{proof}

\section{Proof of Theorem \ref{thm:suf}}\label{sec:suf}
Denote by $\H$ the hyperbolic plane and by $\kappa$ the geodesic curvature of an immersed curve. 
\begin{lemma}\label{lemma:folk}
An immersed  curve $\beta:\R\to\H$ with $|\kappa|\leq 1$ is embedded.
\end{lemma}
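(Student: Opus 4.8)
The statement is that an immersed curve $\beta:\R\to\H$ with geodesic curvature $|\kappa|\le 1$ everywhere is embedded. The plan is to lift $\beta$ to the universal cover of the unit tangent bundle (or, equivalently, work with the development/holonomy picture) and exploit the fact that curves of curvature $|\kappa|<1$ in $\H$ are "quasigeodesics": they stay within bounded distance of a genuine geodesic, and hence cannot close up or self-intersect. The borderline case $|\kappa|\equiv 1$ (horocycles, which are embedded anyway) is excluded from being the only behavior, but even pointwise $|\kappa|\le 1$ with equality allowed at some points should still force the quasigeodesic property as long as we are careful.

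\medskip

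\textbf{Step 1: reduce to a comparison/support statement.} Fix a point $p=\beta(0)$ and the geodesic $\ell$ through $p$ in the direction $\beta'(0)$. The key quantitative claim is that $\dist(\beta(t),\ell)$ stays uniformly bounded — more precisely, I would show that $\beta$ remains in a fixed neighborhood (a hypercycle region) of some complete geodesic. The cleanest route is a Riccati/comparison argument: parametrize by arclength and compare $\beta$ against arcs of curves of constant curvature $1$ (horocycles). Horocycles of curvature $1$ through $p$ tangent to $\beta'(0)$ bound horodisks on either side; the condition $|\kappa|\le 1$ should confine $\beta$, via a maximum-principle/support argument, to the lens-shaped region between these two horocycles (or rather, the complement of the two open horodisks). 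One then iterates this local confinement along $\beta$.

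\medskip

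\textbf{Step 2: promote local confinement to global embeddedness.} Once $\beta$ is trapped in such a region, project to the geodesic $\ell$ (nearest-point projection onto a complete geodesic is distance-nonincreasing in $\H$). I would show the projection $t\maps\pi_\ell(\beta(t))$ is strictly monotone: the curvature bound forces the tangent direction $\beta'(t)$ to make angle less than $\pi/2$ with the direction of $\ell$ "as seen through the projection," so $\beta$ cannot double back. Strict monotonicity of the projection immediately gives injectivity of $\beta$, i.e. embeddedness. Alternatively — and this may be the slicker packaging — use the development map: by Theorem \ref{thm:dev}/Theorem \ref{thm:mon} the curve $\beta$ corresponds (up to isometry) to a path $b(t)$ in $\SLt$, and the curvature bound $|\kappa|\le 1$ translates into a bound on the "angular part" of $b(t)$ that prevents the corresponding isometry from ever being elliptic-like enough to fold the curve back on itself.

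\medskip

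\textbf{Expected main obstacle.} The hard part is the borderline case: pointwise $|\kappa|\le 1$ allows $\kappa$ to touch $\pm 1$ on a large set, and the comparison with horocycles is only a weak (non-strict) confinement there, so a naive maximum principle gives $\beta$ contained in the closed region but not obviously avoiding the boundary horocycles in a way that still yields strict monotonicity of the projection. One must argue that $\beta$ cannot run along a boundary horocycle for positive length without being forced by $\kappa=\pm1$ to coincide with it — and a full horocycle is itself embedded, so this is harmless — but patching smooth pieces with $|\kappa|<1$ to horocyclic arcs and still getting a globally embedded (quasigeodesic) curve requires care. I would handle this by a small perturbation/limiting argument: the statement is closed under $C^2$-limits of curves with $|\kappa|\le 1-\eps$, each of which is strictly quasigeodesic, so one gets embeddedness in the limit, using properness of $\beta$ (which itself follows from the confinement in Step 1). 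This limiting step, and making "properness" precise, is where the real work lies.
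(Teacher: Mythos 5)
The central quantitative claim of your Step 1 --- that $\beta$ is confined to a bounded (hypercycle) neighborhood of some complete geodesic --- is exactly what fails once the curvature bound is not strict: a horocycle has $\kappa\equiv 1$ and lies within bounded distance of \emph{no} geodesic, and more generally a curve whose curvature tends to $1$ can leave every hypercycle neighborhood, so the ``quasigeodesic'' confinement (and the properness you later extract from it) is simply unavailable under the hypothesis $|\kappa|\le 1$. Even in the strict case $|\kappa|\le 1-\eps$, the curve need not stay near the tangent geodesic $\ell$ at $\beta(0)$ (a hypercycle of constant curvature $k<1$ stays near its \emph{axis}, not near its tangent geodesic at a point), so the monotonicity of the nearest-point projection onto $\ell$ in Step 2 is aimed at the wrong geodesic, and the mechanism you offer for it (the tangent making angle less than $\pi/2$ with the projected direction of $\ell$) is asserted rather than proved.

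Your proposed repair --- approximate by curves of curvature $(1-\eps)\kappa$ and pass to the limit --- does not close the gap, because embeddedness is not a closed condition under $C^2$ convergence: a sequence of embedded curves can converge to a curve with a \emph{tangential} self-intersection, and properness does not exclude this. To make the limit work you would need a separation (chord--arc) estimate uniform in $\eps$, e.g.\ a comparison of chords of $\beta$ with chords of horocycle arcs; but such an estimate is essentially the content of the lemma itself (indeed of the Arm-Lemma-type results the paper labors over elsewhere), and the confinement constants from the strict case blow up as $\eps\to 0$. The paper's proof avoids all of this with one pointwise observation: at each $\beta(t)$ take the geodesic perpendicular to $\beta$, whose ideal endpoints are the base points of the two horocycles tangent to $\beta'(t)$; the bound $|\kappa|\le 1$ forces these two endpoints to move in opposite directions along $\partial\H$ as $t$ increases (one standing still where $\kappa=\pm 1$), so the perpendiculars are pairwise disjoint and each separates the past of $\beta$ from its future, which forbids any self-intersection --- borderline case included, with no approximation or limiting argument needed.
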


This lemma  appeared without proof in \cite[Lemma 1]{B}. The author has kindly sent us a sketch of the following proof, he heard in a class of W. Thurston.

\begin{wrapfigure}{r}{0.2\textwidth}
\vspace{-2em}
\def\svgwidth{.2\textwidth}
\begingroup%
  \makeatletter%
  \providecommand\color[2][]{%
    \errmessage{(Inkscape) Color is used for the text in Inkscape, but the package 'color.sty' is not loaded}%
    \renewcommand\color[2][]{}%
  }%
  \providecommand\transparent[1]{%
    \errmessage{(Inkscape) Transparency is used (non-zero) for the text in Inkscape, but the package 'transparent.sty' is not loaded}%
    \renewcommand\transparent[1]{}%
  }%
  \providecommand\rotatebox[2]{#2}%
  \newcommand*\fsize{\dimexpr\f@size pt\relax}%
  \newcommand*\lineheight[1]{\fontsize{\fsize}{#1\fsize}\selectfont}%
  \ifx\svgwidth\undefined%
    \setlength{\unitlength}{352.36037127bp}%
    \ifx\svgscale\undefined%
      \relax%
    \else%
      \setlength{\unitlength}{\unitlength * \real{\svgscale}}%
    \fi%
  \else%
    \setlength{\unitlength}{\svgwidth}%
  \fi%
  \global\let\svgwidth\undefined%
  \global\let\svgscale\undefined%
  \makeatother%
  \begin{picture}(1,0.8954832)%
    \lineheight{1}%
    \setlength\tabcolsep{0pt}%
    \put(0,0){\includegraphics[width=\unitlength,page=1]{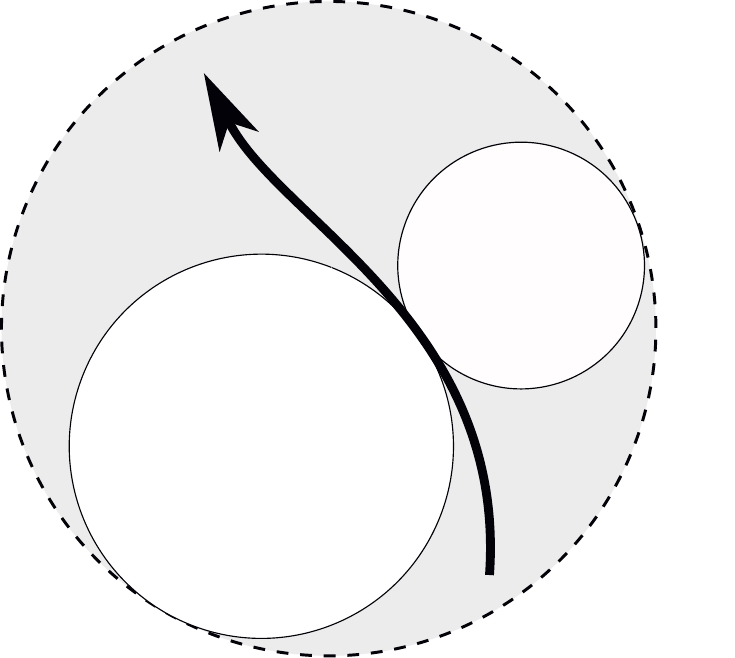}}%
    \put(0.6809029,0.20584532){\color[rgb]{0,0,0}\makebox(0,0)[lt]{\lineheight{1.25}\smash{\begin{tabular}[t]{l}\s$\beta$\end{tabular}}}}%
    \put(0.315141,0.27899771){\color[rgb]{0,0,0}\makebox(0,0)[lt]{\lineheight{1.25}\smash{\begin{tabular}[t]{l}\s$\g$\end{tabular}}}}%
    \put(0.17696428,0.00101871){\color[rgb]{0,0,0}\makebox(0,0)[lt]{\lineheight{1.25}\smash{\begin{tabular}[t]{l}\ss$p_1$\end{tabular}}}}%
    \put(0.89385765,0.58136089){\color[rgb]{0,0,0}\makebox(0,0)[lt]{\lineheight{1.25}\smash{\begin{tabular}[t]{l}\ss$p_2$\end{tabular}}}}%
    \put(0,0){\includegraphics[width=\unitlength,page=2]{horocycles.pdf}}%
  \end{picture}%
\endgroup%

\end{wrapfigure}

\mn{\em Proof.} For each $t$ there are exactly two horocycles in $\H$ tangent to $\beta'(t)$ (curves with  $\kappa\equiv 1$). Since $|\kappa (t)|\leq 1$, the curve $\beta$ will never go inside either of the horocycles. Observe that the exterior of this pair of horocycles (the shaded region in the figure) consists of  two connected components: one that contains  ``the future'' of $\beta$ and the other one its past.

The geodesic $\g$ perpendicular to $\beta$ at $\beta(t)$ hits $\partial\H$ at the base points of these tangent horocycles. Again, $\g$ divides $\H$ in two disjoint parts, one that contains the future of $\beta$ and the other one its past. As $t$ increases, these perpendiculars $\g$ will always be disjoint, since their feet at ``infinity", $p_1$ and $p_2$,  are moving in opposite  directions along $\partial\H$ (or one stands  still and the other moves, when  $\kappa=1$, see below). 
In particular, $\beta$ cannot double back and self-intersect, i.e., it is an embedding.  

\begin{wrapfigure}{r}{0.3\textwidth}
\vspace{.2em}\hspace{1em}\def\svgwidth{.3\textwidth}
\begingroup%
  \makeatletter%
  \providecommand\color[2][]{%
    \errmessage{(Inkscape) Color is used for the text in Inkscape, but the package 'color.sty' is not loaded}%
    \renewcommand\color[2][]{}%
  }%
  \providecommand\transparent[1]{%
    \errmessage{(Inkscape) Transparency is used (non-zero) for the text in Inkscape, but the package 'transparent.sty' is not loaded}%
    \renewcommand\transparent[1]{}%
  }%
  \providecommand\rotatebox[2]{#2}%
  \newcommand*\fsize{\dimexpr\f@size pt\relax}%
  \newcommand*\lineheight[1]{\fontsize{\fsize}{#1\fsize}\selectfont}%
  \ifx\svgwidth\undefined%
    \setlength{\unitlength}{245.06763128bp}%
    \ifx\svgscale\undefined%
      \relax%
    \else%
      \setlength{\unitlength}{\unitlength * \real{\svgscale}}%
    \fi%
  \else%
    \setlength{\unitlength}{\svgwidth}%
  \fi%
  \global\let\svgwidth\undefined%
  \global\let\svgscale\undefined%
  \makeatother%
  \begin{picture}(1,0.45790709)%
    \lineheight{1}%
    \setlength\tabcolsep{0pt}%
    \put(0,0){\includegraphics[width=\unitlength,page=1]{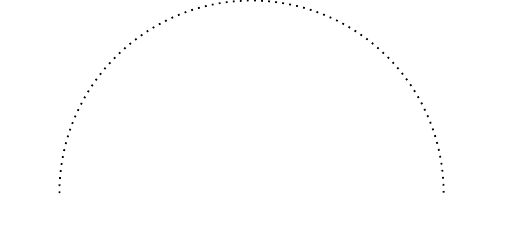}}%
    \put(0.56465618,0.19175445){\color[rgb]{0,0,0}\makebox(0,0)[lt]{\lineheight{1.25}\smash{\begin{tabular}[t]{l}\s$\beta$\end{tabular}}}}%
    \put(0.22990956,0.2912928){\color[rgb]{0,0,0}\makebox(0,0)[lt]{\lineheight{1.25}\smash{\begin{tabular}[t]{l}\s$\g$\end{tabular}}}}%
    \put(0,0){\includegraphics[width=\unitlength,page=2]{ps.pdf}}%
    \put(0.84738763,0.01350489){\color[rgb]{0,0,0}\makebox(0,0)[lt]{\lineheight{1.25}\smash{\begin{tabular}[t]{l}\s$p_2$\end{tabular}}}}%
    \put(0,0){\includegraphics[width=\unitlength,page=3]{ps.pdf}}%
    \put(0.08311541,0.01323485){\color[rgb]{0,0,0}\makebox(0,0)[lt]{\lineheight{1.25}\smash{\begin{tabular}[t]{l}\s$p_1$\end{tabular}}}}%
  \end{picture}%
\endgroup%

\end{wrapfigure}

 As for the claim about the motion of   $p_1$ and  $p_2$, we first note that this motion depends on the second-order jet of $\beta$, 
 and that at each point of $\beta$ one can find an osculating curve (i.e., with second-order contact with $\beta$)  with constant curvature. In the upper half plane model of $\H$, a curve with constant curvature $|\kappa|\leq 1$ is congruent to a line segment. If $|\kappa|<1$ then the segment is non-horizontal,  $p_1, p_2\in\R$ are the feet of a semicircle, and are easily seen to  move in opposite directions. 
 
\begin{wrapfigure}{r}{0.2\textwidth}
\vspace{-1.5em}
\hspace{1em}\def\svgwidth{.18\textwidth}
\begingroup%
  \makeatletter%
  \providecommand\color[2][]{%
    \errmessage{(Inkscape) Color is used for the text in Inkscape, but the package 'color.sty' is not loaded}%
    \renewcommand\color[2][]{}%
  }%
  \providecommand\transparent[1]{%
    \errmessage{(Inkscape) Transparency is used (non-zero) for the text in Inkscape, but the package 'transparent.sty' is not loaded}%
    \renewcommand\transparent[1]{}%
  }%
  \providecommand\rotatebox[2]{#2}%
  \newcommand*\fsize{\dimexpr\f@size pt\relax}%
  \newcommand*\lineheight[1]{\fontsize{\fsize}{#1\fsize}\selectfont}%
  \ifx\svgwidth\undefined%
    \setlength{\unitlength}{169.10649337bp}%
    \ifx\svgscale\undefined%
      \relax%
    \else%
      \setlength{\unitlength}{\unitlength * \real{\svgscale}}%
    \fi%
  \else%
    \setlength{\unitlength}{\svgwidth}%
  \fi%
  \global\let\svgwidth\undefined%
  \global\let\svgscale\undefined%
  \makeatother%
  \begin{picture}(1,0.67172362)%
    \lineheight{1}%
    \setlength\tabcolsep{0pt}%
    \put(0.04097204,0.32491936){\color[rgb]{0,0,0}\makebox(0,0)[lt]{\lineheight{1.25}\smash{\begin{tabular}[t]{l}\s$\beta$\end{tabular}}}}%
    \put(0.48394554,0.61036406){\color[rgb]{0,0,0}\makebox(0,0)[lt]{\lineheight{1.25}\smash{\begin{tabular}[t]{l}\s$\g$\end{tabular}}}}%
    \put(0,0){\includegraphics[width=\unitlength,page=1]{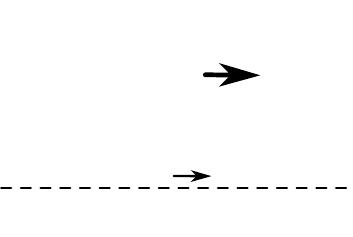}}%
    \put(0.37671188,0.0153149){\color[rgb]{0,0,0}\makebox(0,0)[lt]{\lineheight{1.25}\smash{\begin{tabular}[t]{l}\s$p_1$\end{tabular}}}}%
    \put(0,0){\includegraphics[width=\unitlength,page=2]{ps1.pdf}}%
  \end{picture}%
\endgroup%

\end{wrapfigure}

 If  $|\kappa|=1$ then the line segment is horizontal (a horocycle), $\g$ is a vertical geodesic, with one foot  on the $x$-axis, and the other at $\infty$.  As the point on the horizontal segment moves, $\g$ is translated horizontally to a family of parallel (non-intersecting) vertical geodesics (these geodesics share a point at infinity).  
\qed

\vspace{1.5em} 

We now proceed to the proof of Theorem \ref{thm:suf}.  Assume $\G:\R\to\R^2$ is a unit speed  immersion, $L$-periodic, where $L$ is the length of $\G$, with bicycle monodromy $b(\G)$. Let $\tG :\R\to\H$ be a hyperbolic development  of $\G$, parametrized by arc length and with  the same geodesic curvature function as $\G$. Thus $|\kappa_{\tG}|\leq 1$, but $|\kappa_{\tG}|\not\equiv 1$. By Theorem \ref{thm:dev},
$\tG (t+L)=h (\tG (t))$ for all  $t\in\R$ and some $h\in\Isom^+(\H)\cong \PSLt$, conjugate to $b(\G)^{-1}$.

From  Lemma \ref{lemma:folk} we know that $\tG$ is an embedding. Let us see that $h$ cannot be an elliptic isometry.
First, $h$ cannot have finite order, else $h$-invariance of $\tG$  implies that $\tG$ is closed. Similarly, if $h$ is conjugate to an  irrational rotation then $\tG$ would  be dense in some annulus and self-intersect. See Fig.~\ref{fig:dense}. 

Thus $h$ is either parabolic or hyperbolic. In either case, for each $p\in\H$,  $h^{\pm\infty}(p):=\lim_{n\to\pm\infty}h^n(p)\in\partial\H=S^1$ exists, and $h$ is hyperbolic if and only if these two limits are distinct for some (and therefore all)  $p\in \H$. Take $p=\tG (0)$. The geodesic perpendicular to $\tG$ at $p$ divides $\partial\H$ in two disjoint open arcs. Since $\tG$ doesn't have curvature $\equiv 1$, it is not a horocycle, and therefore $h^\infty(p)=\lim_{t\to\infty}\G(t)$ does not coincide with either of the base points of the geodesic. Thus $h^\infty(p)$ belongs to the future arc.  Similarly, $h^{-\infty} (p)$ belongs to the past arc. Thus $h^\infty(p)\neq h^{-\infty}(p)$ and $h$ is hyperbolic.
\qed
\begin{figure}
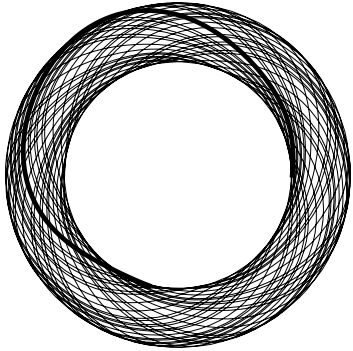

\centering
\fig{.35}{dense}
\caption{A non-closed curve invariant under an elliptic isometry of infinite order (rotation by an irrational angle) fills up densely  an annulus and must self intersect.}\label{fig:dense}
\end{figure}

\begin{rmrk} Let us compare Theorem \ref{thm:suf} with the version of Menzin Conjecture proved in \cite{LT}. 
Both theorems provide sufficient conditions for a front track $\G$ to have hyperbolic monodromy, but under different assumptions. The assumptions of the former are local 
(small pointwise curvature), while the assumptions of the latter are global (convexity and large area). These two types of conditions are independent, although for simple curves, the local condition  $|\kappa|\leq 1$  of Theorem \ref{thm:suf} implies the area condition $A>\pi$. One way of showing this is by  proving that at least  one of the  osculating circles of a simple $\G$ is contained in its interior. Surprisingly, proving this is not so easy. See Theorem 2 of \cite{PZ} or Corollary 3.6 of \cite{W}.
\end{rmrk}

\begin{cor}\label{cor:small}
Let $\G\subset\R^2$ be a closed immersed curve. Then for sufficiently large $c>0$ (or sufficiently small bicycle length) the bicycle monodromy of $c\G$ is hyperbolic.
\end{cor}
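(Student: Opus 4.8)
The plan is to reduce Corollary \ref{cor:small} to Theorem \ref{thm:suf} by showing that rescaling the front track by a large factor makes its curvature pointwise small. Recall that if $\G$ is a fixed closed immersed curve with curvature function $\kappa_\G$ and we rescale it by $c>0$, obtaining $c\G$, then arc length scales by $c$ and curvature scales by $1/c$: at corresponding points, $\kappa_{c\G} = \kappa_\G / c$. Since $\G$ is a fixed \emph{smooth closed} curve, its curvature is bounded, say $|\kappa_\G|\le \kappa_{\max} < \infty$. Hence for $c > \kappa_{\max}$ we have $|\kappa_{c\G}| < 1$ pointwise, so in particular $|\kappa_{c\G}|\le 1$ and is not identically $1$. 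Theorem \ref{thm:suf} then applies directly and gives that the bicycle monodromy of $c\G$ is hyperbolic.

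First I would make precise the scaling behavior of curvature. Parametrize $\G$ by arc length $s\in[0,L]$; then $c\G$ parametrized by $c\G(s)$ has speed $c$, and reparametrizing by its own arc length $u = cs$ one checks that the signed geodesic curvature satisfies $\kappa_{c\G}(u) = c^{-1}\kappa_\G(u/c)$. This is the one genuinely computational point, but it is entirely routine — it is the standard fact that curvature has dimensions of inverse length. Second, I would invoke compactness: $\kappa_\G$ is a continuous function on the compact parameter circle $\R/L\Z$, hence bounded, so $\kappa_{\max} := \max |\kappa_\G|$ is finite. Third, choose any $c > \max(\kappa_{\max}, \,\text{anything})$ — concretely $c > \kappa_{\max}$ suffices — so that $\sup |\kappa_{c\G}| = \kappa_{\max}/c < 1$. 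Fourth, apply Theorem \ref{thm:suf} to the closed immersed curve $c\G$ to conclude its bicycle monodromy is hyperbolic. The parenthetical remark about "sufficiently small bicycle length" follows by the same argument, since scaling the curve by $c$ with bike length $1$ is equivalent, after rescaling the ambient plane by $1/c$, to keeping $\G$ fixed and taking bike length $1/c$; so $\ell < 1/\kappa_{\max}$ gives hyperbolic monodromy.

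There is essentially no obstacle here: the corollary is a direct consequence of Theorem \ref{thm:suf} once one notes the elementary scaling law for curvature and the boundedness of $\kappa_\G$ on a compact curve. The only subtlety worth stating explicitly is why the exceptional case "$|\kappa|\equiv 1$" in Theorem \ref{thm:suf} cannot cause trouble — and it cannot, because for $c$ strictly larger than $\kappa_{\max}$ we in fact have the strict inequality $|\kappa_{c\G}| < 1$ everywhere, so the hypothesis of Theorem \ref{thm:suf} holds with room to spare. (Even if $\G$ were, say, a round circle of radius $r$, then $c\G$ has radius $cr$ and curvature $1/(cr) < 1$ as soon as $c > 1/r$, and is certainly not identically $1$.) Thus the proof is a two-line deduction, and I would write it as such rather than belaboring the scaling computation.
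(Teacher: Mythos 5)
Your proposal is correct and follows exactly the paper's route: the paper's proof is the one-line observation that $\kappa_{c\G}=\frac{1}{c}\kappa_\G$, so Theorem \ref{thm:suf} applies once $c$ exceeds the (finite) maximum curvature of the fixed closed curve $\G$. Your additional remarks on the arc-length reparametrization, the boundedness of $\kappa_\G$ by compactness, and the equivalence with small bicycle length are just explicit spellings-out of what the paper leaves implicit.
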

\pf Since $\kappa_{c\G}={1\over c}\kappa_\G$, the result follows from the theorem.\qed

\section{Proof of Theorem \ref{thm:nec}}\label{sec:nec}
Let $\R\to\G$ be a unit speed parametrization of a smooth closed convex curve of length $L$ in the Euclidean plane with hyperbolic bicycling monodromy $b(\G)$. Let $\tG:\R\to\H$ be a hyperbolic development of $\G$ (see Definition \ref{def:hyp_dev}). By Theorem \ref{thm:mon}, 
$\tG$ is invariant under a hyperbolic isometry $h$ of $\H$, conjugate to $b(\G)^{-1}.$ That is 
$$\tG(t+L)=h(\tG(t)) \mbox{ for all } t\in\R.
$$
The action of $h$ on $\H$ leaves invariant a foliation by curves of constant curvature. In the upper half-plane model of $\H$, we can assume  that $h$ is  a dilation, $z\mapsto \lambda z,$ $\lambda>1$, and that the $h$-invariant foliation consists of the rays emanating from the origin.
We claim that $\tG$ is inscribed between two of these rays: this is clear for the compact portion $\tG([0,L])$, and by $h$-invariance,  for the whole $\tG$. 
The contact points of $\tG$ with  these two rays are clearly tangency points. By shifting if necessary the arclength parameter $t$, we can assume that $\tG$ is tangent to the right ray at $\tG(nL)$, $n\in\Z$. See Fig.~\ref{fig:tG}. 

\begin{figure}[h]
\centering
\def\svgwidth{.8\textwidth}\import{figures/}{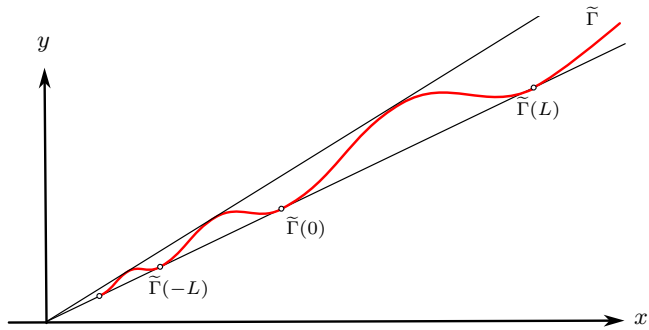}
\caption{The hyperbolic development of a convex curve in $\R^2$ with a hyperbolic bicycle monodromy.} \label{fig:tG}
\end{figure}

The proof of the following proposition is postponed to the next subsection. 
\begin{prop}\label{prop:emb} Fix $n\in\N$, let $\g=\tG\left|_{[-nL, nL]}\right.$,  and let $g$ be the geodesic segment connecting the end points of $\g$. Then $\g$ is simple (no self intersections),  lies on one side of $g$, 
and intersects $g$ only at its end points,  $\tG(-nL)$ and $ \tG(nL).$\end{prop}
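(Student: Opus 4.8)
\textbf{Proof plan for Proposition \ref{prop:emb}.}

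The plan is to prove the three assertions about $\g=\tG|_{[-nL,nL]}$ by combining (i) the Arm Lemma, applied to the convex arcs of $\G$ of length $<L$ via the ``bicycling'' limiting argument promised in the sketch, and (ii) the $h$-invariance of $\tG$ together with the geometry of the invariant foliation by rays. First I would establish the local statement: if $t_1<t_2$ with $t_2-t_1\le L$, then $\tG(t_1)\ne\tG(t_2)$ and moreover the hyperbolic distance $\dist_\H(\tG(t_1),\tG(t_2))$ is at least the Euclidean chord length $|\G(t_1)-\G(t_2)|$, with equality only in degenerate cases. Since $\G$ is convex, any sub-arc of $\G$ of length $\le L$ is a convex arc (its tangent direction turns monotonically through an angle $\le 2\pi$), so the Arm Lemma applies: developing a convex Euclidean arc into $\H$ strictly increases the distance between its endpoints. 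The subtlety flagged in the introduction is that the literature's Arm Lemma is stated for polygons; so here I would approximate the smooth arc $\G|_{[t_1,t_2]}$ by inscribed (or circumscribed) polygons, invoke Theorem \ref{thm:dev_broken} to identify the hyperbolic development of the polygonal approximant with the (piecewise-geodesic) parallel transport of the Foote connection, apply the polygonal Arm Lemma to each approximant, and pass to the limit using continuous dependence of the flow of \eqref{eq:be} on the curve (uniform convergence of $A(t)$ in $L^1$, hence of $b(t)$, hence of $\tG(t)=b(t)^{-1}\cdot i$). This shows $\g$ has no self-intersections among parameter pairs less than $L$ apart.

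Next I would upgrade to full simplicity of $\g$ using the ray geometry. Recall $\tG$ lies in the wedge between two $h$-invariant rays from the origin, tangent to the right ray exactly at the points $\tG(mL)$. Suppose $\tG(t_1)=\tG(t_2)$ with $t_2-t_1>L$; write $t_1\in[mL,(m+1)L)$ and $t_2\in[kL,(k+1)L)$ with $k\ge m+1$. Applying a power of $h$ (which preserves $\tG$ as a set and permutes the tangency points) I may normalize $m=-n$ or bring the pair into a single fundamental domain's worth of ``vertical'' comparison; the point is that between consecutive tangency points $\tG(mL)$ and $\tG((m+1)L)$ the arc $\tG|_{[mL,(m+1)L]}$ is a convex-looking arc touching the right ray only at its endpoints, and distinct such arcs, being related by powers of $h$, are separated by the rays $h^j(\text{right ray})$ — here is where I would use the ``partial convexity'' of $\tG$ (each period-arc stays on one side of, and meets, the bounding rays only at the tangency points) to conclude that two period-arcs can meet only at a common tangency point, and a single period-arc is embedded by the local statement. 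Hence $\g$ is simple, its only possible self-contact being $\tG(-nL)=\tG(nL)$, which is excluded since these lie on the right ray at different ``heights'' ($h^{2n}$ separates them).

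Finally, for the claim that $\g$ lies on one side of the geodesic $g$ joining $\tG(-nL)$ and $\tG(nL)$ and meets $g$ only at the endpoints: both endpoints lie on the right invariant ray, so $g$ is the geodesic segment between two points of that ray. I would argue that the whole arc $\g$ lies in the region of the wedge bounded by the right ray and $g$ — equivalently, $\g$ never crosses $g$ and never crosses the right ray except at its endpoints. Crossing the right ray at an interior point is impossible because $\tG$ meets the right ray only at the tangency points $\tG(mL)$ (which for $|m|<n$ I must also rule out as lying on the arc $g$; but the tangency points $\tG(mL)$, $-n<m<n$, are ``below'' $\tG(-nL)$ or ``above'' $\tG(nL)$ along the ray only after accounting for the ordering — in fact $h^j$ orders them along the ray, so $\tG(-nL),\tG(-( n-1)L),\dots,\tG(nL)$ appear in order, and $g$ connects the two extreme ones, so each intermediate tangency point lies on the ray between them, hence potentially on the same side — this is precisely the configuration that forces $\g$ to the one side). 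Then a separation/Jordan-curve argument on the disk bounded by $\g\cup g$, using that $\g$ cannot cross $g$ transversally an odd number of times and cannot be tangent to it without violating the local distance-increase estimate, finishes the one-sidedness.

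\textbf{Main obstacle.} The hard part will be the passage from the polygonal Arm Lemma to the smooth case — i.e., making rigorous that the hyperbolic development of a smooth convex arc is a limit of developments of its polygonal approximations, with the chord-length comparison surviving the limit — since this is exactly the gap the authors identify in the existing literature; everything after that (upgrading local simplicity to global simplicity and one-sidedness via $h$-invariance and the ray foliation) is comparatively routine Jordan-curve bookkeeping, modulo having the ``partial convexity'' of $\tG$ in hand.
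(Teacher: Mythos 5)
Your Step 1 is essentially the paper's route: the paper also proves injectivity of $\tG$ on a single period by a ``smooth Arm Lemma'' obtained from the polygonal Arm Lemma via convex polygonal approximation, identification of the polygonal development with the bicycling transport (Theorem \ref{thm:dev_broken}), and continuous dependence of the flow of \eqref{eq:be} on the curve (this is Proposition \ref{prop:lim} and Lemma \ref{lemma:sarm}). The genuine gap is in what you dismiss as ``comparatively routine Jordan-curve bookkeeping.'' First, your separation mechanism fails as stated: in the upper half-plane normalization $h$ is the dilation $z\mapsto\lambda z$, and every ray through the origin --- in particular the right bounding ray --- is itself $h$-invariant, so the images of that ray under powers of $h$ are all the same ray and separate nothing; all period-arcs lie in one and the same wedge and are tangent to one and the same ray. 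Second, the ``partial convexity'' you propose to use (each period-arc stays in the wedge and meets the bounding ray only at its endpoint tangencies) is too weak: a period-arc could retreat from the outer ray into the interior of the wedge and sweep back over an earlier period-arc without ever touching the bounding rays, so your hypothesis neither excludes intersections of distinct period-arcs nor yields one-sidedness with respect to $g$.

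What is actually needed --- and what the paper proves as Lemma \ref{lemma:klein}, working in the Klein model where the invariant leaves are ellipses through $(\pm 1,0)$ and geodesic convexity is Euclidean convexity --- is the stronger containment: $\tG([0,L])$ lies in the convex hull of the arc $\eta$ of the outer invariant leaf between the consecutive tangency points $\tG(0)$ and $\tG(L)$. This containment is what makes the $h$-translates of that region pairwise disjoint away from the tangency points (hence global injectivity of $\tG$), and it simultaneously places $\g$ between the outer arc and the polygonal path through the tangency points, both of which lie on one side of the chord $g$, giving the one-sidedness claim at once. Its proof is not bookkeeping: the paper argues by contradiction, using $\kappa\ge 0$, the tangency conditions at $\tG(0)$ and $\tG(L)$, and the Step 1 injectivity, via an explicit domain construction inside the disk. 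You would need to supply this lemma (or an equivalent precise convexity statement) rather than take it as given; once it is in hand, your remaining deductions do go through, but your closing sketch for one-sidedness (parity of crossings with $g$, tangency ``violating the distance estimate'') is not a proof and becomes unnecessary.
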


We now show how to use this proposition to prove Theorem \ref{thm:nec}. The proof,  following Bridgeman\cite{B}, uses the  Gauss-Bonnet Theorem and the isoperimetric inequality in $\H$.

\begin{figure}[h]
\centering
\def\svgwidth{.5\textwidth}\import{figures/}{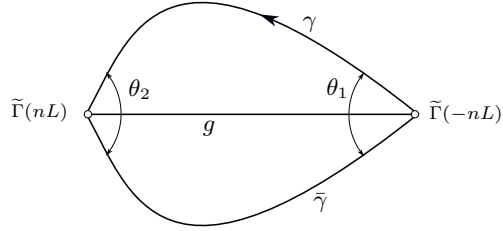}
\caption{Bridgeman's picture.}\label{fig:bri}
\end{figure}

By Proposition  \ref{prop:emb} and the Jordan Theorem,  $\g\cup g$ is the boundary of a compact connected  domain in $\H$, homeomorphic to a disk, lying on one side of $g$.  Reflect $\g$ about $g$ to get a curve $\bar\g$, which together with $\g$ determines a simple closed curve, enclosing a (topological) disk $D$. Denote the internal angles at $\tG(-nL), \tG(nL )$ by $\theta_1, \theta_2$, respectively. See Fig.~\ref{fig:bri}. 

Applying the   Gauss-Bonnet Theorem to $D$, one gets
\be\label{eq:GB}
2\int_{-nL}^{nL} \kappa= A(D) +\theta_1 +\theta_2,
\ee
where $A(D)$ is the area of $D$. 

Let $A_p$ be the area of a disk in $\H$ with perimeter  $p$. Then 
$$
A_p=p\sqrt{1+\left({2\pi\over p}\right)^2}-2\pi.
$$
This follows from the usual formulas for the area and perimeter of a disk of radius $r$ in $\H$:
\begin{equation*}
\begin{split}
p&=2\pi\sinh r\\
A&=2\pi (\cosh r -1).
\end{split}
\end{equation*}

Now, the perimeter of $D$ is $4nL$, hence by the isoperimetric inequality, 
$$
A(D)\leq A_{4nL}=4nL\sqrt{1+\left({\pi\over 2nL}\right)^2}-2\pi<4nL+ {\pi^2\over 2nL}-2\pi,
$$
where we used the inequality $\sqrt{1+x}<1+x/2$ for $x>0$. 

The geodesic curvature of $\tG$ (same as that of  $\G$) is an $L$-periodic
 function, hence  combining the last  inequality with equation \eqref{eq:GB},  one gets
$$
2\pi=\int_0^L\kappa={1\over 2n}\int_{-nL}^{nL}\kappa={1\over 4n}\left[A(D)+\theta_1+\theta_2\right]
< L + {1\over 4n}\left({\pi^2\over 2 n L}+\theta_1 +\theta_2-2\pi\right), 
$$
where $\theta_1,\theta_2$ depend on $n$. 

\begin{lemma}\label{lemma:sum}
$\lim_{n\to\infty}(\theta_1+ \theta_2)<2\pi.$
\end{lemma}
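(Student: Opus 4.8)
### Proof proposal for Lemma \ref{lemma:sum}

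The plan is to understand the angles $\theta_1,\theta_2$ geometrically. Recall that $D$ is obtained by reflecting the arc $\g=\tG|_{[-nL,nL]}$ about the geodesic segment $g$ joining $\tG(-nL)$ to $\tG(nL)$; the internal angle $\theta_i$ at each of these two points is therefore \emph{twice} the angle between $\g$ and $g$ at that point. So it suffices to show that $\lim_{n\to\infty}\big(\angle(\g,g;\tG(-nL))+\angle(\g,g;\tG(nL))\big)<\pi$, where $\angle(\g,g;p)$ denotes the angle the arc makes with the chord at the endpoint $p$. The reason this should be $<\pi$ rather than merely $\le\pi$ is the hyperbolicity of $h$: because $\tG$ is inscribed between the two invariant rays from the origin and is tangent to the right ray at every point $\tG(nL)$, the chord $g$ connecting $\tG(-nL)$ to $\tG(nL)$ has a definite limiting direction as $n\to\infty$, and that direction is \emph{not} tangent to $\tG$ at the endpoints.

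First I would set up coordinates in the upper half-plane so that $h$ is the dilation $z\mapsto\lambda z$ with $\lambda>1$, the invariant foliation is the pencil of rays from the origin, the right invariant ray is (say) the positive imaginary axis or a fixed ray $\{re^{i\alpha_0}:r>0\}$, and $\tG(nL)$ lies on this ray with $\tG(nL)=\lambda^n\,\tG(0)$. The key observation is scaling self-similarity: the chord from $\tG(-nL)=\lambda^{-n}P$ to $\tG(nL)=\lambda^{n}P$ (where $P:=\tG(0)$) has, as $n\to\infty$, one endpoint tending to $0$ and the other to $\infty$ along the fixed ray, so the Euclidean segment joining them converges (in direction at $\tG(nL)$) to the ray itself. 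Hence $\lim_{n\to\infty}\angle(\g,g;\tG(nL))$ equals the angle between $\tG$ and the invariant ray at $\tG(0)$, measured on the appropriate side. Since $\tG$ is tangent to the ray from the \emph{inside} (it lies in the wedge between the two rays and touches the right ray), this limiting angle is the angle of the wedge-interior tangent — call it $\phi_R\in[0,\pi)$; crucially $\phi_R<\pi$ because $\tG$ does not cross the ray. By the analogous argument at the left endpoint, using that $h^{-1}$ contracts, $\lim_{n\to\infty}\angle(\g,g;\tG(-nL))=\phi_L$ where $\phi_L$ is the interior angle $\tG$ makes with the \emph{left} invariant ray at its tangency point there. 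I would then argue that $\phi_L+\phi_R<\pi$: the two rays bound a wedge of angle $<\pi$ at the origin (they are distinct rays from a point, and $\tG$ with its compact fundamental arc fits strictly between them, so the wedge is a proper angular sector, not a half-plane or more), and $\tG$ enters and leaves this wedge tangentially to its two sides while staying inside; the sum of the two tangency-angles of a convex-ish arc inscribed in a wedge of opening angle $\psi$ is controlled by $\psi$. More precisely, in hyperbolic geometry one can compare with the Euclidean picture near each tangency and use that the invariant rays, being geodesics in $\H$ only asymptotically, still bound a region whose "angular width at infinity" is strictly less than $\pi$.

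The cleanest route to the strict inequality avoids delicate wedge estimates: since $h$ is hyperbolic with axis the geodesic through $0$ and $\infty$ (after adjusting coordinates so the invariant ray is literally the imaginary axis and $h:z\mapsto\lambda z$), and $\tG$ is $h$-invariant and contained in one of the two half-planes cut off by this axis (the half-plane $\{\Re z\ge 0\}$ or $\{\Re z\le 0\}$ — here I use Proposition \ref{prop:emb} and the fact that $\tG$ touches but does not cross the right ray), the chords $g_n$ from $\tG(-nL)$ to $\tG(nL)$ converge to the axis itself, and $\tG$ is tangent to the axis at $\tG(0)$ from one side. Therefore both limiting angles $\phi_L,\phi_R$ equal the angle between $\tG$ and its tangent geodesic — which is $0$ in the idealized tangency case — but since $\tG$ has curvature bounded but nonzero in general it pulls away from the axis, giving $\phi_L=\phi_R$ strictly less than $\pi/2$, whence the sum is $<\pi$.

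The main obstacle I anticipate is making rigorous the claim that the chords $g_n$ converge, in the relevant endpoint-direction sense, to the invariant ray (equivalently, to the axis of $h$), and that the limiting angle of $\tG$ with these chords is strictly less than $\pi/2$ (not just $\le\pi/2$). The convergence of chords is a clean consequence of the scaling $\tG(nL)=\lambda^nP$ together with $\tG(-nL)=\lambda^{-n}P$, but one must be careful that the \emph{arc} $\g_n$ near its far endpoint $\tG(nL)$ does not wobble — this is where one invokes that $\tG|_{[0,L]}$ is a fixed compact arc and $\g_n$ near $\tG(nL)$ is just the $\lambda^n$-rescaling of a fixed arc near $\tG(0)$, so its tangent direction there is literally constant in $n$ after rescaling. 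Establishing the strict inequality then reduces to showing $\tG$ is not a horocycle or a geodesic (which would force tangency of $\tG$ to $g_n$ in the limit); this is guaranteed because $\tG$ has the periodic geodesic curvature of a genuine closed convex Euclidean curve and is invariant under the \emph{hyperbolic} $h$, so it cannot be an $h$-invariant geodesic or horocycle. Assembling these pieces yields $\theta_1+\theta_2=2(\angle(\g,g;\tG(-nL))+\angle(\g,g;\tG(nL)))\to 2(\phi_L+\phi_R)<2\pi$.
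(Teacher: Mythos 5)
There is a genuine gap, and it lies exactly where the limiting angle has to be computed. Your reduction $\theta_i = 2\angle(\g,g;\tG(\pm nL))$ is fine, and the normalization $h\colon z\mapsto\lambda z$, $\tG(nL)=\lambda^n z_0$ matches the paper. But you then treat the chord $g$ as if it were (asymptotically) the Euclidean segment from $\lambda^{-n}z_0$ to $\lambda^{n}z_0$ and conclude its direction tends to the invariant ray. Since both endpoints lie on the ray $\R_+z_0$, the Euclidean segment \emph{is} the ray, while the hyperbolic chord $g$ is a half-circle centered on the real axis; as $n\to\infty$ these geodesics converge not to the ray but to the vertical geodesic through the origin, i.e.\ to the axis of $h$. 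Consequently the limiting arc--chord angle is not the angle between $\tG$ and the ray (which is $0$, since $\tG(\pm nL)$ are tangency points with the \emph{same} outer ray -- there is no tangency with the left ray at $\tG(-nL)$, contrary to your $\phi_L$), but the angle $\alpha_\infty$ between $\R_+z_0$ and the vertical direction. Your ``cleanest route'' compounds the error: one cannot normalize so that the tangency ray is the imaginary axis while $h$ stays a dilation -- the tangency ray is an invariant curve of constant curvature (a hypercycle) at positive distance from the axis in general, and $\tG$ is not tangent to the axis, so the claim that the limiting angles vanish ``in the idealized tangency case'' and are pushed below $\pi/2$ by curvature is based on a false picture; in particular the strictness has nothing to do with $\tG$ failing to be a geodesic or horocycle.

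The paper's argument repairs this in one line: by the symmetry of a circle meeting a line, $g$ makes \emph{equal} angles $\alpha=\theta_1/2=\theta_2/2$ with the ray $\R_+z_0$ at both endpoints; as $n\to\infty$ the semicircles $g$ tend to the positive $y$-axis, so $\alpha\to\alpha_\infty$, the angle between $\R_+z_0$ and the vertical, which is automatically $<\pi/2$ because $z_0$ lies in the open upper half-plane. Hence $\theta_1+\theta_2=4\alpha\to 4\alpha_\infty<2\pi$. Your scaling-self-similarity remark (the arc near $\tG(nL)$ is the $\lambda^n$-image of a fixed arc near $\tG(0)$) is the right tool for justifying the limit of the angle at a point escaping to infinity, but it must be applied with the chord correctly identified as a geodesic converging to the axis, not to the ray.
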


\begin{wrapfigure}{r}{.5\textwidth}
\vspace{-1em}
\def\svgwidth{.5\textwidth}\import{figures/}{hyperbol.pdf_tex}
\end{wrapfigure}

\n{\em Proof.} Let us  go back to the upper half-plane picture of the beginning of the proof, as in Fig.~\ref{fig:tG}, so that 
$\tG(nL)=\lambda^n z_0,$ where $z_0=\tG(0).$ Then $g$ is the geodesic semicircle  passing through 
 $
\lambda^n z_0, z_0/\lambda^n.
$ 
The interior angles this circle forms with the ray $\R^+ z_0$ are $\alpha=\theta_1/2=\theta_2/2.$ 
As $n\to\infty$ the semi-circle $g$ tends to the positive $y$-axis and $\alpha$ tends to the angle $\alpha_\infty$ between $\R_+ z_0$ and the positive $y$-axis. Since $\alpha_\infty<\pi/2,$ one has $\lim(\theta_1+\theta_2)<2\pi. $
\qed

\mn

By Lemma \ref{lemma:sum}, for $n$ large enough,  $\theta_1 +\theta_2-2\pi<\delta_0$ for some $\delta_0<0$, hence for $n$ large enough, 
$${\pi^2\over 2 n L}+\theta_1 +\theta_2-2\pi <0,
$$
which implies $2\pi<L.$  This completes the proof of Theorem \ref{thm:nec}. \qed

\subsection{Proof of Proposition \ref{prop:emb}}\label{sec:emb}

The proof  is divided  into  three  steps: 
\begin{enumerate}[left=.5em,itemsep=0em, label={\bf \arabic*.}]
\item Show that 
$\tG\left|_{[0,L)}\right.$ is injective. We do not need for this step to assume that $b(\G)$ is hyperbolic. The main tool is a smooth version of a result in comparison geometry called the Arm Lemma. This is the most technical part of our proof of Theorem \ref{thm:nec}. 
\item Use  the hyperbolicity assumption on $b(\G)$  to show that $\tG$ is injective. 
\item Show that $\g$ lies on one side of $g$. 
\end{enumerate}

\mn{\bf Step 1.}  $\tG\left|_{[0,L)}\right.:[0,L)\to  \H$ is injective. 

 \mn The proof is based on  the following result, a special case of the so-called ``Arm Lemma"  \cite[\S9.63]{AKP}.  

\begin{lemma}
\label{lemma:arm}
Let $P\subset\R^2$  be an oriented convex closed polygon with vertices $q_0,  \ldots, q_n$,   edge lengths $d_0, \ldots,  d_n$ and external angles $\alpha_i\in(0,\pi)$ at $q_i$, $i=1,\ldots,n-1.$ Let $\tilde P\subset \H$ be a hyperbolic development of $P$, except for the last edge; that is,  
$$d_0=\tilde d_0,\ldots, d_{n-1}=\tilde d_{n-1},\quad  
 \alpha_2=\tilde  \alpha_2,\ldots,  \alpha_{n-1}=\tilde  \alpha_{n-1}.
 $$
See Fig.~\ref{fig:arm}. 
Let $\tilde d_n$ be the distance between $\tilde{q}_0$ and $\tilde{q}_n$. Then 
$d_n<\tilde d_n. 
$ \end{lemma}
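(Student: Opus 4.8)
The plan is to prove the Arm Lemma by induction on the number of hinges, opening up one vertex angle at a time and showing that each such opening strictly increases the distance between the free endpoints.

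\textbf{Setup and reduction to a single hinge.} First I would recast the statement as a monotonicity claim about one-parameter deformations. Think of the polygonal arm $q_0 q_1 \cdots q_n$ (with the last edge $q_{n-1}q_n$ omitted as a rigid ``pointer'') as a linkage with fixed edge lengths $d_0,\ldots,d_{n-1}$ and hinges at $q_1,\ldots,q_{n-1}$. Straightening the arm in $\R^2$ (taking all interior angles to $\pi$, i.e. all external angles to $0$) gives the maximal possible value $d_0+\cdots+d_{n-1}$ of the endpoint distance, which dominates $d_n$ because $q_0,\ldots,q_n$ is a closed convex polygon, so $d_n < d_0+\cdots+d_{n-1}$ by the triangle inequality applied around the convex polygon (the last edge is shorter than the sum of the others). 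The real content is therefore: passing from $\R^2$ to $\H$, while keeping the edge lengths and the external angles $\alpha_2,\ldots,\alpha_{n-1}$ fixed, can only \emph{increase} the chord $\tilde d_n$ between $\tilde q_0$ and $\tilde q_n$. I would prove this by the standard Arm Lemma (Cauchy/Schur type) argument: induct on $n$, and at each step use the key single-hinge fact.

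\textbf{The single-hinge lemma.} The crux is: if two rigid rods of fixed lengths are joined at a hinge, the distance between their free endpoints is a strictly increasing function of the hinge angle, \emph{and moreover this holds in a ``space-form-monotone'' way} — for the same lengths and the same hinge angle, the chord is at least as long in $\H$ as in $\R^2$. Concretely, fix lengths $a,b$ and angle $\varphi\in(0,\pi)$; in $\R^2$ the chord is $c_E$ with $c_E^2 = a^2+b^2-2ab\cos\varphi$, while in $\H$ the hyperbolic law of cosines gives $\cosh c_H = \cosh a\cosh b - \sinh a \sinh b \cos\varphi$. One checks $\partial(\cosh c_H)/\partial\varphi = \sinh a\sinh b\sin\varphi > 0$, so $c_H$ increases with $\varphi$; and comparing $c_H$ with $c_E$ at equal $\varphi$ reduces to the inequality $\cosh\sqrt{a^2+b^2-2ab\cos\varphi} \le \cosh a\cosh b - \sinh a\sinh b\cos\varphi$, which in turn follows from the elementary fact that, with $t=\cos\varphi\in[-1,1]$ fixed, the function is convex/comparison-monotone; a clean way is to note both sides agree at $\varphi$ where the triangle degenerates and then compare derivatives, or to invoke the Alexandrov comparison that triangles with the same side lengths have larger angles in $\H$ than in $\R^2$ (equivalently, for the same two sides and included angle, the third side is longer in $\H$). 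I would state this comparison as the one analytic input and verify it by the Taylor/series comparison of $\cosh$ against the Euclidean expression.

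\textbf{The induction.} With the single-hinge lemma in hand, run the Schur-type argument: start from the Euclidean arm $q_0\cdots q_n$; first replace the Euclidean edges by hyperbolic geodesic segments of the same lengths meeting at the same external angles $\alpha_i$ — this is exactly $\tilde P$ minus its last edge — and show the endpoint distance does not decrease. One does this by inserting the arm into $\H$ one vertex at a time: at vertex $q_i$, the sub-arm $q_0\cdots q_i$ already realized in $\H$ has some chord, and attaching the next rod $q_i q_{i+1}$ at external angle $\alpha_{i+1}$; by the induction hypothesis the ``effective'' angle at which the remaining rigid part is seen only opens up, and the single-hinge monotonicity plus the $\R^2\to\H$ comparison gives $\tilde d_{i+1}\ge$ (Euclidean analogue). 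Care is needed with convexity: the hypothesis $\alpha_i\in(0,\pi)$ and convexity of $P$ guarantee the partial chords never ``overshoot'' and that all intermediate angles stay in $(0,\pi)$, so the monotonicity is applied in the correct range; this is the usual subtle point in Cauchy's arm lemma and I would handle it by the standard ``if an intermediate angle would exceed $\pi$, truncate the argument there'' device from \cite[\S9.63]{AKP}. Finally, to get the strict inequality $d_n<\tilde d_n$ rather than $\le$, note that since $|\kappa|$ of the original $\G$ is not identically the degenerate case — more precisely, since $P$ is a genuine convex polygon with at least one hinge and nonzero edge lengths — at least one single-hinge comparison is strict (the $\R^2\to\H$ comparison is strict whenever $a,b>0$ and $\varphi\in(0,\pi)$), so strictness propagates.

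\textbf{Main obstacle.} The technical heart — and the step I expect to consume the most care — is controlling the intermediate angles during the inductive insertion so that the single-hinge monotonicity is always invoked on the monotone branch (angles in $(0,\pi)$), exactly as in Cauchy's classical arm lemma where a naive induction fails; the convexity of $P$ is what saves this, and spelling out why the hyperbolic partial arms inherit the needed convexity/angle bounds is the delicate part. The purely analytic comparison $\cosh\sqrt{a^2+b^2-2ab\cos\varphi}\le \cosh a\cosh b-\sinh a\sinh b\cos\varphi$ is routine by series comparison and I would relegate it to a one-line verification.
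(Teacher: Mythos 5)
A preliminary remark on comparison: the paper does not prove Lemma \ref{lemma:arm} at all --- it is invoked as a special case of the Arm Lemma of \cite[\S 9.63]{AKP}, and the paper's own work goes into the smooth version (Lemma \ref{lemma:sarm}) obtained from it by polygonal approximation. So you are attempting something the paper deliberately outsources, and the question is whether your sketch actually closes the argument. Your analytic ingredient is correct: with sides $a,b$ and included angle $\varphi$, the hyperbolic law of cosines $\cosh c_H=\cosh a\cosh b-\sinh a\sinh b\cos\varphi$ gives strict monotonicity of $c_H$ in $\varphi$, and $c_H>c_E$ for $a,b>0$, $\varphi\in(0,\pi)$; strictness of the final inequality would indeed propagate from one strict hinge comparison.

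The gap is in the inductive step. After the first step the two configurations no longer share their hinge data: at $\tilde q_i$ the relevant hinge has one side equal to the chord $\dist(\tilde q_0,\tilde q_i)$, which is \emph{longer} than its Euclidean counterpart, and an included angle (between that chord and the next edge $[\tilde q_i\tilde q_{i+1}]$) that is \emph{not} controlled by the hypothesis that the polygon angles $\alpha_i$ agree. Single-hinge monotonicity helps only in the angle variable: the side opposite a hinge is not monotone in a side length --- in both $\R^2$ and $\H$, lengthening the side $a$ shortens the opposite side whenever $\varphi$ is acute and $a<b\cos\varphi$ (resp.\ $\tanh a<\tanh b\cos\varphi$) --- and for a convex arm that turns by nearly $2\pi$ the chord--edge angles do become acute near the far end, so ``longer chord $+$ same external angle $\Rightarrow$ longer next chord'' is false as stated. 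What is needed is the strengthened induction hypothesis that, along with the chord length, the angle between $[\tilde q_i\tilde q_0]$ and the next edge is at least its Euclidean analogue; you assert exactly this (``the effective angle only opens up'') but never prove it, and propagating it requires an angle comparison for triangles with unequal side data together with the Cauchy--Steinitz style case analysis (Alexandrov's lemma and the truncation device) --- which is precisely the content of the result in \cite[\S 9.63]{AKP} that you defer to. As written, then, your argument is not an independent proof: at its crux it rests on the same citation the paper uses for the whole lemma. Either carry out the strengthened induction in full (tracking chord length and chord--edge angle simultaneously, with the overshoot case handled by the triangle inequality), or simply quote the AKP Arm Lemma as the paper does; the preliminary reduction via straightening in $\R^2$ and the triangle inequality is, in either case, not needed.
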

 
\begin{figure}[h] \centering{\def\svgwidth{.8\textwidth}\import{figures/}{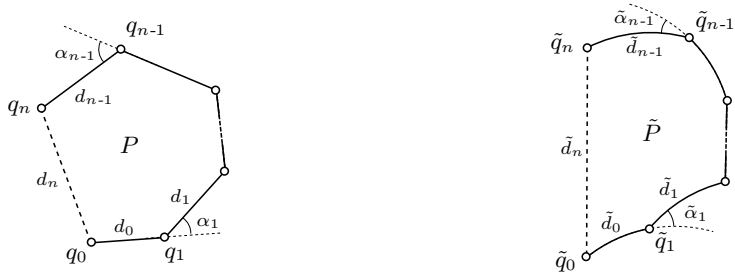}}
\caption{The Arm Lemma.}\label{fig:arm}
\end{figure}

In the next subsection we shall use Lemma \ref{lemma:arm} to prove the following. 

\begin{lemma}[``Smooth Arm Lemma"]\label{lemma:sarm}
Let $\G:[a, b]\to\R^2$ be a unit-speed parametrization of a simple convex smooth  curve and $\tG:[a,b]\to\H$ a hyperbolic development of $\G$. That is,   $\G([a,b ])$  together with the chord connecting its endpoints forms a simple closed curve, the boundary of a convex set with non-empty interior, and 
$$|\G'(t)|=|\tG'(t)|_{\H}=1,\ \kappa_{\tG}(t)=\kappa_\G(t)\geq 0 \mbox{ for all } t\in[a,b].
$$
  Let $d, \tilde d$ be the distance between the end points of $\G, \tG$, respectively. Then $d\leq \tilde d.$\end{lemma}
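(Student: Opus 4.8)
The plan is to deduce the Smooth Arm Lemma (Lemma~\ref{lemma:sarm}) from the polygonal Arm Lemma (Lemma~\ref{lemma:arm}) by a limiting argument over inscribed polygons. First I would fix a partition $a=t_0<t_1<\dots<t_N=b$ of the parameter interval and let $q_i=\G(t_i)$ be the corresponding vertices of an inscribed polygon $P_N$. Because $\G$ is convex and simple, for $N$ large enough (mesh small enough) the polygon $P_N$ is itself convex, its external angles $\alpha_i$ lie in $(0,\pi)$, and its edge lengths $d_i=|\G(t_{i+1})-\G(t_i)|$ are close to $t_{i+1}-t_i$. The subtlety is that the hyperbolic development $\tG$ of the \emph{curve} is not literally the hyperbolic development of the inscribed polygon $P_N$: a development of $P_N$ would use straight hyperbolic geodesic edges of lengths $d_i$ joined at angles $\alpha_i$, whereas $\tG$ bends continuously with curvature $\kappa_{\tG}=\kappa_\G$. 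So the real work is to compare these two hyperbolic polygonal-vs-smooth pictures.

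The clean way to organize this is to introduce, for each subarc $\G|_{[t_i,t_{i+1}]}$, its own hyperbolic development $\tG_i$ and compare the hyperbolic chord $\tilde d_i$ of that subarc with the Euclidean chord $d_i$. If I can first establish the Smooth Arm Lemma \emph{infinitesimally} — i.e.\ that on each short subarc $\tilde d_i \ge d_i$, with an error that is higher order in the mesh — then I can build a hyperbolic geodesic polygon $\tilde P_N$ whose $i$-th edge has length exactly $d_i$ and whose external angles match those of $\tG$ at the junction points, apply Lemma~\ref{lemma:arm} to conclude $d_N < $ (chord of $\tilde P_N$), and then show that as $N\to\infty$ the chord of $\tilde P_N$ converges to $\tilde d$, the chord of $\tG$. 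Actually the cleaner route, and the one I would push, is to use Theorem~\ref{thm:dev_broken}: the hyperbolic development of a piecewise-smooth curve is governed by the bicycling transport, and parallel transport along a smooth curve is the limit of parallel transport along its polygonal approximations (as the introduction explicitly flags, cf.\ App.~1A of \cite{A}). Concretely, I would approximate $\G$ by a sequence of piecewise-geodesic (in the Euclidean sense, i.e.\ polygonal) paths $P_N$ with the same endpoints, apply the exact polygonal Arm Lemma to each $P_N$ versus its hyperbolic development $\tilde P_N$, and then pass to the limit using that (i) $d_N\to d$ trivially since the endpoints are fixed and $P_N$ is inscribed, and (ii) $\tilde d_N\to\tilde d$ because the bicycling monodromy of $P_N$ converges to that of $\G$ in $\SLt$ (by continuity of the solution of the linear ODE \eqref{eq:be} under approximation of the driving data), hence the hyperbolic isometry carrying $\tG(a)$ to the far endpoint converges, hence the displacement converges.

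Carrying this out requires the following steps in order. \textbf{Step A:} For a fine partition, verify that the inscribed polygon $P_N$ is convex with external angles in $(0,\pi)$ — this is where strict convexity (or at least convexity plus the simple-closed-curve hypothesis) of $\G$ enters, and it is routine. \textbf{Step B:} Check that the hyperbolic development of the polygon $P_N$ — defined as in Section~\ref{sec:prelim} via composition of bicycle transports along the geodesic edges and the rotations at the corners — is exactly the object $\tilde P_N$ to which Lemma~\ref{lemma:arm} applies, so that $d_N<\tilde d_N$ for every $N$. \textbf{Step C:} Establish convergence $\tilde d_N\to\tilde d$. This is the heart of the matter: one writes $\tG(t)=b(t)^{-1}\cdot i$ with $b$ solving \eqref{eq:be}, writes the analogous $b_N$ for the polygonal driving data $A_N(t)$ (a piecewise-continuous $2\times 2$ matrix-valued function converging to $A(t)$ in $L^1$), and invokes continuous dependence of ODE solutions to get $b_N(b)\to b(b)$ in $\SLt$, hence $\tilde d_N = \dist_\H(i, b_N(b)^{-1}\cdot i)\to \dist_\H(i,b(b)^{-1}\cdot i)=\tilde d$. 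Taking the limit of $d_N<\tilde d_N$ gives $d\le\tilde d$. \textbf{Step D:} Note the inequality is weak (not strict) in the limit, which is exactly what the statement claims.

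The main obstacle I expect is \textbf{Step C}, and specifically making the comparison between the curve's development and the polygon's development rigorous: I must be sure that the matrix-valued driving data $A_N$ for the piecewise-geodesic approximation really does converge (in $L^1_{\mathrm{loc}}$, say) to the driving data $A$ for $\G$, uniformly enough that the monodromies converge. The driving data $A(t)=-\tfrac12\smatrix{X'&Y'\\Y'&-X'}$ depends on the \emph{unit tangent} $\G'(t)=(X',Y')$; for the inscribed polygon the tangent is the constant chord direction on each subinterval, which converges to $\G'$ uniformly as the mesh shrinks provided $\G$ is $C^1$ — so the convergence $A_N\to A$ is even uniform, and continuous dependence of the flow is then immediate. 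One must also take a little care matching the \emph{parametrizations}: the polygon's edge lengths $d_i$ differ from the parameter increments $t_{i+1}-t_i$, so $P_N$ should be parametrized by its own arclength and one checks this reparametrization is a small perturbation. These are the kinds of estimates the paper's own remarks warn are "lengthy"; I would present the structure cleanly and relegate the ODE-continuity estimate to a short lemma or a citation to \cite{A}.
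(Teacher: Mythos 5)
Your proposal is correct and follows essentially the same route as the paper: inscribe convex polygons in $\G$, apply the discrete Arm Lemma \ref{lemma:arm} to each together with its hyperbolic development (via Theorem \ref{thm:dev_broken}), and pass to the limit using convergence of the bicycle transport $b(\G_N)\to b(\G)$, which the paper establishes exactly as you outline in Step C (uniform convergence of the polygonal tangents to $\G'$, hence of the driving data $A_N\to A$, plus a Gronwall continuous-dependence argument; see Proposition \ref{prop:lim} and Lemmas \ref{lemma:approx}, \ref{lemma:ct}). The only cosmetic difference is that the paper packages the ODE-continuity step as a general statement about parallel transport of a matrix-valued connection 1-form along polygonal approximations, which you instead cite informally.
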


Although Lemma \ref{lemma:sarm} seems a straightforward  ``continuous limit"  of   Lemma \ref{lemma:arm},
 we failed to come up with a simple proof  nor find one in the literature. Instead, we offer a rather indirect ``bicycle proof", using Theorem \ref{thm:dev_broken}.  See Subsection \ref{sect:arm} below.

\mn 

Step 1  follows from the smooth Arm Lemma:  applying  it  to $\G$ restricted to every  subinterval $[a,b]\subset [0,L)$, we  conclude that  $\tG(a)\neq \tG(b)$, hence $\tG\left|_{[0,L)}\right.$ is injective. \qed

\mn{\bf Step 2.} $\tG:\R\to \H$ is injective. 

\mn

We switch from the upper half-plane model of $\H$ in Fig.~\ref{fig:tG} to the Klein model, the unit disk  in $\R^2$, whose geodesics are chords of this disk, so geodesic convexity is the usual (euclidean) convexity.  Let $h\in\PSLt$ be the  isometry of $\H$ such that $\tG(t+L)=h(\tG(t))$, for all $t\in\R$. It is conjugate to the inverse of the bicycle monodromy $b(\G)$, so it is a hyperbolic isometry, with  two fixed points on $S^1=\partial\H$  (in the upper half-plane model we took it to be the  dilation $z\mapsto \lambda z$, where the 2 fixed points are $0, \infty$). We can assume, by passing if needed to a congruent $\tG$, that  the  two  fixed points of $h$ on $\partial\H$ are $(\pm 1,0)$, so the $h$-invariant foliation of $\H$  is the family of  ellipses tangent at $(\pm 1, 0)$, $x^2+y^2/b^2=1,$  $b\in (0,1).$ See Fig.~\ref{fig:klein}a. 

As before, $\tG$ is inscribed between two of these $h$-invariant ellipses (the shaded region in  Fig.~\ref{fig:klein}a), and the points $\tG(nL)$, $n\in\Z$, are tangency points of $\tG$ with the outer ellipse.  Denote by $\eta$ the arc of the outer ellipse between $\tG(0)$ and $\tG(L)$.
See Fig.~\ref{fig:klein}b.

\begin{figure}[h!]
\centering
\def\svgwidth{.85\textwidth}\import{figures/}{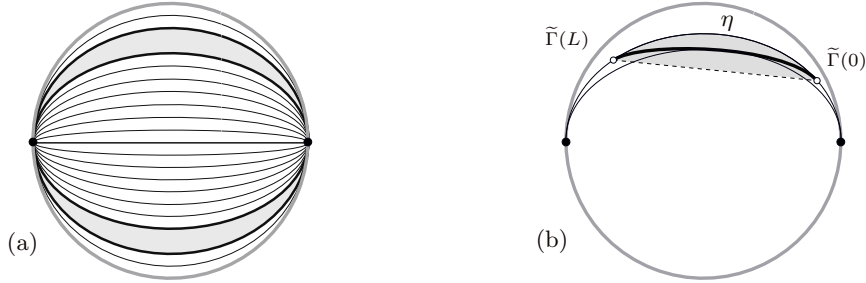}
\caption{\s The  Klein model of $\H$:  (a) The $h$-invariant foliation of $\H$, with 2 $h$-fixed points on the boundary. $\tG$ is inscribed between two leaves of the $h$-invariant foliation, in the shaded area. (b) Lemma \ref{lemma:klein} states that $\tG([0,L])$ is contained in the shaded area.} \label{fig:klein}
\end{figure}

\begin{lemma}\label{lemma:klein}
$\tG([0,L])$ is contained in the convex hull of $\eta$ (the darkened region in Fig.~\ref{fig:klein}b). 
\end{lemma}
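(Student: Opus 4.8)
\textbf{Proof plan for Lemma \ref{lemma:klein}.} The plan is to leverage what Step 1 already gives us --- that $\tG|_{[0,L]}$ is a simple arc, and in fact (via the Smooth Arm Lemma applied to subarcs together with the convexity $\kappa_{\tG}\ge 0$) a \emph{convex} arc in $\H$ --- and combine it with the geometry of the $h$-invariant picture in the Klein model. First I would observe that $\tG([0,L])$ is a convex arc in $\H$: its geodesic curvature $\kappa_{\tG}=\kappa_{\G}\ge 0$ is nonnegative and it is simple by Step 1, so it bounds a convex region together with the geodesic chord $g_1$ joining $\tG(0)$ to $\tG(L)$. In the Klein model geodesic convexity is Euclidean convexity, so $\tG([0,L])\cup g_1$ bounds a Euclidean-convex region $K$. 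The goal is to show $K$ lies inside the convex hull of $\eta$, equivalently that $\tG([0,L])$ stays on the inner side (toward the foliation's ``core'') of $\eta$ and inside the two straight segments from $(\pm1,0)$-adjacent structure that cut off the lune.

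The key geometric input is the tangency data at the endpoints. By the normalization, $\tG(0)$ and $\tG(L)=h(\tG(0))$ both lie on the outer ellipse $E$ of the inscribing pair, and $\tG$ is tangent to $E$ there (these are the tangency points described just before the lemma). So $\tG([0,L])$ is a convex arc which touches $E$ exactly at its two endpoints and whose tangent lines there agree with the tangent lines of $E$. Now I would argue: a convex arc that is tangent to the convex curve $E$ at both its endpoints, and which does not cross $E$ (it can't, since all of $\tG$ is trapped in the closed region between the two ellipses, hence inside $E$), must lie in the region bounded by $\eta$ (the short arc of $E$ between the two points) and the arc $\tG([0,L])$ itself --- this region is precisely the convex hull of $\eta$ minus the ``cap'' cut off by $\tG$, and in particular $\tG([0,L])\subset\operatorname{conv}(\eta)$. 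Concretely: $\operatorname{conv}(\eta)$ is bounded by $\eta$ and the chord $c$ of $E$ between $\tG(0)$ and $\tG(L)$; our arc $\tG([0,L])$ is a convex curve with the same endpoints as $c$, tangent to $E$ there, staying inside $E$ --- so it lies between $c$ and $\eta$, hence in $\operatorname{conv}(\eta)$.

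The step I expect to be the main obstacle is making the tangency-plus-containment argument fully rigorous, i.e.\ ruling out that the convex arc $\tG([0,L])$, despite being tangent to $E$ at its endpoints and confined inside $E$, somehow ``bulges past'' the chord $c$ on the far side or wraps around. The clean way is: let $\ell_0,\ell_L$ be the common tangent lines to $\tG$ and $E$ at $\tG(0),\tG(L)$ (geodesics in $\H$); since $\tG([0,L])$ is convex it lies in the intersection of the two closed half-planes bounded by $\ell_0$ and $\ell_L$ that contain the arc; since $E$ is convex and tangent to $\ell_0,\ell_L$, the short arc $\eta$ lies in the \emph{same} intersection of half-planes, while the complementary arc of $E$ lies outside it. Because $\tG([0,L])$ is inside $E$ and inside this wedge, and the only part of the closed disk bounded by $E$ that lies in the wedge is exactly $\operatorname{conv}(\eta)$, we conclude $\tG([0,L])\subset\operatorname{conv}(\eta)$. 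Care is needed when $\ell_0$ and $\ell_L$ are asymptotically parallel or ultraparallel in $\H$ (the ``wedge'' is then an ideal triangle or a region with a common perpendicular), but in each case the same half-plane bookkeeping goes through; I would also note that $h$-invariance then automatically propagates the conclusion: since $\tG(nL)$, $n\in\Z$, are the tangency points with the outer leaf and $h$ preserves the foliation, each arc $\tG([nL,(n+1)L])$ sits in the corresponding translated copy of $\operatorname{conv}(\eta)$, which is what Step 2 will need.
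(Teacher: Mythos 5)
There is a genuine gap, and it sits exactly where the paper is most careful. Your opening claim --- that because $\kappa_{\tG}\ge 0$ and $\tG|_{[0,L]}$ is simple (Step 1), the arc together with its geodesic chord bounds a convex region --- does not follow from those two facts. Local convexity plus simplicity does not give global convexity for an \emph{arc}: a long geodesic segment followed by an almost-complete circle of curvature $1$ (a ``balloon on a string'') is simple, has $\kappa\ge 0$ and total turning just under $2\pi$, yet together with its chord it bounds a badly non-convex region. This is precisely the ``full convexity'' of $\tG$ that the authors say they believe but deliberately avoid proving; Lemma \ref{lemma:klein} is their substitute (the ``partial convexity'' mentioned in the introduction), so taking convexity of the arc as an observation is close to assuming something stronger than the lemma itself. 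The paper's proof needs no such input: after an affine normalization making the outer leaf a circle, it argues by contradiction at an interior minimum of the height function, using only $y'(t_*)=0$ together with $\kappa\ge 0$ (which forces $x'(t_*)>0$), the injectivity of $\tG|_{[0,L)}$ from Step 1, and a Jordan-curve separation argument.

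Even granting convexity, your ``clean'' half-plane argument fails. Since the outer leaf $E$ is convex and tangent to $\ell_0$ at $\tG(0)$, the closed region bounded by $E$ lies entirely in one closed half-plane of $\ell_0$, and the arc, being inside that region, lies in the same half-plane; likewise for $\ell_L$. So the wedge of half-planes ``containing the arc'' contains the \emph{whole} disk bounded by $E$, not just the convex hull of $\eta$, and the complementary arc of $E$ does not lie outside it. What actually has to be excluded is the arc crossing to the far side of the chord of $E$ joining $\tG(0)$ to $\tG(L)$, and endpoint tangency plus containment in the disk cannot do this on its own: a convex arc tangent to $E$ at both endpoints could a priori bulge toward the complementary arc of $E$. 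Breaking that symmetry requires the directions of departure at the tangency points (in the paper's normalization, $y'(0)>0$ and $y'(L)<0$), which your argument never invokes.
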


\begin{proof} 
The statement is clearly invariant under affine transformations, hence we can assume, by applying an affine map (a vertical stretching followed by a rotation), that the outer  ellipse is a circle, the boundary of the closed unit disk centered at the origin, see Fig.~\ref{fig:lemma}a. This disk  contains $\tG([0,L])$, $\eta$ is an arc of its boundary,  whose endpoints, $\tG(0)$ and $\tG(L)$,  are tangency points of $\tG$ with $\eta$, and have the same  $y$-coordinate. 
Thus, if $\tG(t)=(x(t), y(t))$, then $y(0)=y(L)$, $x(0)=-x(L)>0,$ $\tG(0)\cdot\tG'(0)=\tG(L)\cdot\tG'(L)=0,$ $y'(0)>0,$ $y'(L)<0$,  and we  want to show that $y(t)\geq y(0)$ for all $t\in(0,L).$  See Fig.~\ref{fig:lemma}a.

\begin{figure}[h!]
\centering
\def\svgwidth{.85\textwidth}\import{figures/}{lemma.pdf_tex}
\caption{Lemma \ref{lemma:klein}.}\label{fig:lemma}
\end{figure}
 
 Assume, by contradiction, that $y_*:=\min\{ y(t) \; \big|\; t\in [0,L]\}<y(0)$ and let $t_*\in (0,L)$ such that $y_*=y(t_*).$ Clearly, $y'(t_*)=0,$ and since $\kappa\geq 0$, $x'(t_*)>0.$ Let $\Omega$ be the domain inside the unit disk bounded by the simple curve consisting of $\tG([0,t_*])$, followed by the vertical segment to the point of the unit circle below it, $S$, followed by the counterclockwise circular arc going along the boundary of the unit disk back to $\tG(0)$. See 
 Fig.~\ref{fig:lemma}b.  Now since $x'(t_*)>0$, then $\tG(t_*+\epsilon)$, for small enough $\epsilon>0$, lies inside $\Omega$, while $\tG(L)$ is outside it.
 It follows that $\tG(t)$ must intersect $\partial\Omega$ for some $t\in (t_*,L)$. Let $t_{**}\in (t_*, L)$ be the maximal such $t$, so $\tG(t_{**})\in \partial\Omega$ but $\tG(t)\not\in\partial\Omega$ for all $t\in(t_{**},L].$ Now $\tG(t_{**})$ clearly cannot belong to  the closed vertical segment from $\tG(t_*)$ to $S$, nor to the circular arc from $S$ to $\tG(0)$. So it must belong to $\tG([0,t_*])$, contradicting the injectivity of $\tG\left|_{[0,L)}\right.$ from  Step 1.
\end{proof}

\begin{wrapfigure}{r}{0.45\textwidth}
\vskip-1.5em
\def\svgwidth{.45\textwidth}
\begingroup%
  \makeatletter%
  \providecommand\color[2][]{%
    \errmessage{(Inkscape) Color is used for the text in Inkscape, but the package 'color.sty' is not loaded}%
    \renewcommand\color[2][]{}%
  }%
  \providecommand\transparent[1]{%
    \errmessage{(Inkscape) Transparency is used (non-zero) for the text in Inkscape, but the package 'transparent.sty' is not loaded}%
    \renewcommand\transparent[1]{}%
  }%
  \providecommand\rotatebox[2]{#2}%
  \newcommand*\fsize{\dimexpr\f@size pt\relax}%
  \newcommand*\lineheight[1]{\fontsize{\fsize}{#1\fsize}\selectfont}%
  \ifx\svgwidth\undefined%
    \setlength{\unitlength}{132.96472865bp}%
    \ifx\svgscale\undefined%
      \relax%
    \else%
      \setlength{\unitlength}{\unitlength * \real{\svgscale}}%
    \fi%
  \else%
    \setlength{\unitlength}{\svgwidth}%
  \fi%
  \global\let\svgwidth\undefined%
  \global\let\svgscale\undefined%
  \makeatother%
  \begin{picture}(1,0.36777025)%
    \lineheight{1}%
    \setlength\tabcolsep{0pt}%
    \put(0,0){\includegraphics[width=\unitlength,page=1]{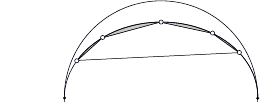}}%
    \put(0.19769673,0.26046487){\color[rgb]{0.05882353,0.01960784,0.01960784}\transparent{0.21265402}\rotatebox{0.696949}{\makebox(0,0)[lt]{\lineheight{1.25}\smash{\begin{tabular}[t]{l}  \end{tabular}}}}}%
    \put(0.09183619,0.20501282){\color[rgb]{0.05882353,0.01960784,0.01960784}\rotatebox{0.696949}{\makebox(0,0)[lt]{\lineheight{1.25}\smash{\begin{tabular}[t]{l}\ss$\tG(nL)$\end{tabular}}}}}%
    \put(0.8827086,0.23446193){\color[rgb]{0.05882353,0.01960784,0.01960784}\rotatebox{0.696949}{\makebox(0,0)[lt]{\lineheight{1.25}\smash{\begin{tabular}[t]{l}\ss$\tG(-nL)$\end{tabular}}}}}%
    \put(0.54789618,0.11988796){\color[rgb]{0.05882353,0.01960784,0.01960784}\rotatebox{0.696949}{\makebox(0,0)[lt]{\lineheight{1.25}\smash{\begin{tabular}[t]{l}\ss$g$\end{tabular}}}}}%
  \end{picture}%
\endgroup%

\end{wrapfigure}

\mn{\em Completing Step 2.} Lemma \ref{lemma:klein} shows that $\tG$ is injective since it follows, by $h$-invariance, that each curve segment $\tG([nL, (n+1)L])$, $n\in\Z$,  lies in the convex hull of the elliptic arc $h^n(\eta)$, and these regions are disjoint (except for meeting at their endpoints  $\tG(nL)$).  This completes Step 2. \qed

\mn{\bf Step 3.} $\g$ lies on one side of $g$.

 \mn 
 
 This step also follows immediately from Lemma \ref{lemma:klein}. The curve segment $\g:=\tG\left|_{[-nL, nL]}\right.$ lies between two curves: the   arc of the $h$-invariant ellipse between the end points of $\g$,  and the polygonal path with $2n+1$ vertices $\tG(k)$, $|k|\leq n$. Since both of these curves  lie on the same  side of the line segment  $g$ joining the endpoints of $\g$, the same holds for $\g$.  \qed.

\subsection{Proof of Lemma \ref{lemma:sarm} (the smooth Arm Lemma)}\label{sect:arm}
We start with the following  general setup. Let $\alpha$ be a $k\times k$ matrix-valued smooth  1-form in $\R^2$, i.e., $\alpha=fdx+gdy$, where $f,g:\R^2\to Mat_{k\times k}(\R)$ are smooth  functions. For  each continuous and piecewise-$C^1$  curve $\G:[0,1]\to \R^2$,   one considers the ODE 
\be\label{eq:pt}
\y'=A(t)\y,\mbox{ where }  A(t):=\alpha(\G'(t))\in Mat_{k\times k}(\R).
\ee

 Note that since $\G$ is assumed piecewise-$C^1$, $A(t)$ is only piecewise continuous.
 A solution to such an ODE is a continuous and piecewise-$C^1$ function $\y:[0,1]\to \R^k$, satisfying the ODE on each  subinterval  $[a,b]\subset[0,1]$ where $\g$ is $C^1$, and thus $A$ is continuous. The usual existence and uniqueness results for ODEs extend also to such a linear system with piecewise continuous coefficients; see, for example, exercise 1.2 of  \cite[p. 46]{Ha}.

Define $T(\G)\in \GL_k(\R)$ as the map that assigns to $\y(0)$ the value $\y(1)$ of the unique solution $\y(t)$ to equation \eqref{eq:pt} with this initial condition. Note that $T(\G)$ is invariant under orientation preserving reparametrizations of $\G$. 

We  can  think of $d\y-\alpha\y$ as the covariant derivative of a section of  the trivial vector bundle of rank $k$ on $\R^2$ with respect to a linear connection given by $\alpha$,  then  $T(\G)$ is the parallel transport of this connection along $\G$. 

\begin{prop}\label{prop:lim}Let $\G:[0,1]\to\R^2$ be a constant-speed smooth immersion
 and $\G_n\subset\R^2$, $n=1,2,3,\ldots,$  the polygonal path with $n+1$ vertices $q_0, \ldots , q_n,$ where $q_k=\G(t_k), $ $t_k=k/n,$ $k=0, 1, \ldots, n$. Then $T(\G_n)\to T(\G).$ 
\end{prop}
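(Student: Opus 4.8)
\textbf{Proof plan for Proposition \ref{prop:lim}.}

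The plan is to estimate, uniformly over all relevant curves, how much the parallel transport changes when a smooth arc is replaced by its chord. Fix notation: for a smooth curve $\sigma:[s_0,s_1]\to\R^2$ let $T(\sigma)\in\GL_k(\R)$ denote the transport defined by \eqref{eq:pt}. The key elementary building block is a \emph{one-segment estimate}: if $\sigma$ is a $C^1$ arc defined on an interval of length $\delta=s_1-s_0$, contained in a fixed compact set $K\subset\R^2$, with $|\sigma'|$ bounded by a constant $v$, and $[\sigma]$ is the affine chord from $\sigma(s_0)$ to $\sigma(s_1)$ parametrized at constant speed on the same interval, then
\be\label{eq:onestep}
\|T(\sigma)-T([\sigma])\|\le C\big(\operatorname{osc}_{[s_0,s_1]}\sigma'\big)\,\delta + C'\delta^2,
\ee
where $C,C'$ depend only on $K$, on $v$, and on the sup-norms of $f,g$ and their first derivatives on $K$. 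This follows from writing both $T(\sigma)$ and $T([\sigma])$ via the Peano (Volterra) series $I+\int A + \iint AA + \cdots$ for equation \eqref{eq:pt}; the zeroth-order terms agree, the difference of the first-order terms is $\int_{s_0}^{s_1}\big(\alpha(\sigma'(t))-\alpha([\sigma]'(t))\big)\,dt$, whose integrand is $O(\operatorname{osc}\sigma' ) + O(\delta)$ by the mean value theorem applied to $f,g$ (the curves stay within $O(\delta)$ of each other), and all higher-order terms are $O(\delta^2)$ because $\|A(t)\|$ is uniformly bounded. I would also record the coarser bound $\|T(\sigma)-I\|\le C''\delta$, valid for both the arc and its chord.

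Next I would assemble these estimates telescopically. Write $\g_n$ as the concatenation of the chords $[\g|_{[t_{k-1},t_k}]$ and $\g$ as the concatenation of the arcs $\g|_{[t_{k-1},t_k]}$; both transports factor as ordered products $T(\g)=T_n\cdots T_1$ and $T(\g_n)=\widehat T_n\cdots\widehat T_1$ over $k=1,\dots,n$, where $T_k$ (resp.\ $\widehat T_k$) is the transport along the $k$-th arc (resp.\ chord). Since all factors are uniformly bounded in norm — say by $1+C''/n\le e^{C''/n}$, so the product of any subset of them is bounded by $e^{C''}$, uniformly in $n$ — the standard product-difference bound gives
\be\label{eq:telescope}
\|T(\g_n)-T(\g)\|\le e^{C''}\sum_{k=1}^{n}\|\widehat T_k-T_k\|.
\ee
Applying \eqref{eq:onestep} with $\delta=1/n$ on each subinterval, and using that $\g'$ is uniformly continuous on $[0,1]$ so that $\max_k\operatorname{osc}_{[t_{k-1},t_k]}\g' =:\omega(1/n)\to0$, the right side of \eqref{eq:telescope} is at most $e^{C''}\big(C\,\omega(1/n)+C'/n\big)\to0$ as $n\to\infty$. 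This proves $T(\g_n)\to T(\g)$.

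I expect the main technical obstacle to be making the one-segment estimate \eqref{eq:onestep} fully rigorous with the right power of $\delta$, in particular controlling the first-order term: one must keep the $O(\delta^2)$ error genuinely second order (not $O(\delta)$) while only assuming $\g$ is $C^1$, since the oscillation-of-$\g'$ term cannot be improved without a $C^2$ hypothesis — and it is exactly this term, summed $n$ times against $\delta=1/n$, that survives and must be shown to vanish via uniform continuity of $\g'$. A secondary point requiring care is the bookkeeping in \eqref{eq:telescope} for the piecewise-continuous coefficients: the existence/uniqueness and continuity-in-initial-data facts for \eqref{eq:pt} with piecewise continuous $A$ are exactly those quoted from \cite{Ha} just before the proposition, so the product decomposition and the uniform norm bounds on partial products are legitimate. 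Once \eqref{eq:onestep} and \eqref{eq:telescope} are in hand the convergence is immediate.
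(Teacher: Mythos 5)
Your argument is correct, but it takes a genuinely different route from the paper's. The paper parametrizes each chord at constant speed on $[t_{k-1},t_k]$ and proves (Lemmas \ref{lemma:d} and \ref{lemma:approx}) that $\g_n'\to\g'$ uniformly on $[0,1]$; since $\alpha(\g_n')\to\alpha(\g')$ uniformly, convergence of the transports then follows from a general continuity-in-coefficients statement for linear systems with piecewise continuous coefficients, proved via Gronwall's inequality (Lemma \ref{lemma:ct}), together with Lemma \ref{lemma:exc}. You instead compare the transports subinterval by subinterval: a Peano/Volterra-series estimate bounds the difference between the transport along one arc and along its chord by a multiple of $\bigl(\operatorname{osc}_{[t_{k-1},t_k]}\g'\bigr)/n+1/n^2$, and a telescoping product identity, with partial products uniformly bounded because each factor is $I+O(1/n)$, sums these to $O\bigl(\omega(1/n)+1/n\bigr)$, where $\omega$ is the modulus of continuity of $\g'$; this vanishes as $n\to\infty$. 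Both proofs rest on the same two facts --- the constant-speed chord velocity is the average of $\g'$ over the subinterval, hence within $\operatorname{osc}\g'$ of it, and a stability estimate for the linear equation \eqref{eq:pt} --- but your multiplicative decomposition yields an explicit rate of convergence, while the paper's route is more modular, with the Gronwall lemma usable as a black box. Two small points of bookkeeping: in the telescoping identity the left and right partial products each contribute a bounded factor, so the constant in your final bound should be the square of the one you wrote (harmless); and the inequality asserting that the chord's velocity is within $\operatorname{osc}_{[t_{k-1},t_k]}\g'$ of $\g'(t)$, which your one-segment estimate uses implicitly, deserves one explicit line (it is immediate from the averaging identity, and plays the role of the paper's Lemmas \ref{lemma:d}--\ref{lemma:approx}). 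With these trivial touches, your proof is complete and correct.
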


\mn{\em Proof.} This follows from   the next lemmas.

\begin{lemma}\label{lemma:d}
Let $\G:[a,b]\to \R^2$ be a unit-speed smooth immersion, with $d=|\G(b)-\G(a)|>0$ and $\G(b)=\G(a)+de^{i\theta_*}$ for some  $\theta_*\in\R$. Let   $\G'(t)=e^{i\theta(t)}$ for a smooth function $\theta:[a,b]\to\R$, whose variation is bounded by some $\epsilon<\pi/2$. That is, $|\theta(t_1)-\theta(t_2)|\leq \epsilon$ for all $t_1, t_2\in[a,b]$.   Then 
\begin{enumerate}[itemsep=0em, label={\rm (\arabic*)}]
\item  There is  $t_*\in[a,b]$ such that 
$\theta(t_*)=\theta_*$ {\rm (mod $2\pi$)}; 

\item    $L\cos\epsilon\leq d\leq L,$ where $L:=b-a$ is the length of $\G$. 
\end{enumerate}
\end{lemma}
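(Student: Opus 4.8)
\emph{Proof strategy.} The plan is to write the chord of $\g$ as the integral of its unit tangent and then extract both assertions from the real and imaginary parts of that integral. First I would normalize by a rigid motion: replacing $\g$ by $t\mapsto e^{-i\theta_*}(\g(t)-\g(a))$ preserves $L$, preserves $d$, preserves the oscillation of $\theta$, and subtracts $\theta_*$ from $\theta$, so without loss of generality $\theta_*=0$ --- and then $(1)$ asks for some $t_*$ with $\theta(t_*)\in 2\pi\Z$. Under this normalization $\g(b)-\g(a)$ equals the positive real number $d$, and since $\g(b)-\g(a)=\int_a^b\g'(t)\,dt=\int_a^b e^{i\theta(t)}\,dt$, taking real and imaginary parts gives
\[
\int_a^b\cos\theta(t)\,dt=d>0,\qquad \int_a^b\sin\theta(t)\,dt=0 .
\]

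Next I would use these two identities in tandem. Since $\theta$ is continuous on the compact interval $[a,b]$, its image is a closed interval $[m,M]$, and the oscillation hypothesis gives $M-m\le\epsilon<\pi/2$; in particular $[m,M]$ contains at most one integer multiple of $\pi$. If it contained none, then $\sin\theta$ would be continuous and nowhere zero, hence of constant sign, contradicting $\int_a^b\sin\theta\,dt=0$. If it contained an \emph{odd} multiple of $\pi$, then --- because $M-m<\pi/2$ --- the whole of $[m,M]$ would lie inside an open band on which $\cos<0$, so $\int_a^b\cos\theta\,dt<0$, contradicting $d>0$. Hence $[m,M]$ meets $2\pi\Z$, and the intermediate value theorem supplies $t_*$ with $\theta(t_*)\in 2\pi\Z$. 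This proves $(1)$.

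For $(2)$ the upper bound is immediate from $d=\bigl|\int_a^b\g'(t)\,dt\bigr|\le\int_a^b|\g'(t)|\,dt=b-a=L$. For the lower bound I would use the point $t_*$ just produced: writing $\theta(t_*)=2\pi k$, the oscillation bound gives $|\theta(t)-2\pi k|\le\epsilon<\pi/2$ for every $t\in[a,b]$, whence $\cos\theta(t)=\cos(\theta(t)-2\pi k)\ge\cos\epsilon$, and integrating yields $d=\int_a^b\cos\theta(t)\,dt\ge (b-a)\cos\epsilon=L\cos\epsilon$.

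Everything here is elementary, and I do not anticipate a genuine obstacle; the one step that needs a little care --- and the step I would flag as the crux --- is the dichotomy inside $(1)$, namely excluding an \emph{odd} multiple of $\pi$ from the range of $\theta$. This is exactly where the hypothesis $d>0$ enters (through the $\cos$ identity): the $\sin$ identity by itself only forces \emph{some} multiple of $\pi$ into the range, and one needs the sign of $d$ to rule out the odd case and thus land on a multiple of $2\pi$.
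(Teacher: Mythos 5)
Your proof is correct and follows essentially the same route as the paper's: normalize so the chord is real (the paper takes it horizontal), write $d=\int_a^b\cos\theta\,dt$, locate a $t_*$ where the tangent is parallel to the chord, use $d>0$ to exclude the anti-parallel case, and integrate $\cos\theta\geq\cos\epsilon$ for the lower bound. The only cosmetic difference is how $t_*$ is produced --- the paper takes the extremum of $y(t)$ (so $\sin\theta(t_*)=0$), while you argue from $\int_a^b\sin\theta\,dt=0$ and the range of $\theta$ --- which is the same idea in a slightly different guise.
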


\begin{wrapfigure}{r}{0.3\textwidth}
\vskip-1em
\def\svgwidth{.3\textwidth}\import{figures/}{piece.pdf_tex}
\end{wrapfigure}

\mn{\em Proof.} Let $\G(t)=(X(t), Y(t))$ and suppose without loss of generality that $Y(a)=Y(b)=0$ and $X(b)>X(a),$ so $d=X(b)-X(a)$ and $\theta_*=0$. Then 
$$d=X(b)-X(a)=\int_a^bX'(t)dt=\int_a^b\cos\theta(t)\,dt>0. 
$$

Let $t_*\in[a,b]$ be such that
$|Y(t_*)|=\max\{|Y(t)|\st t\in [a,b]\}.$ Then $\theta(t_*)=0$ or $\theta(t_*)=\pi$ (after shifting $\theta(t)$ by some integer multiple of $2\pi$). The second option is excluded because otherwise $\epsilon <\pi/2$ would imply  $\pi/2<\theta(t)<3\pi/2$   for all $t\in[a,b]$, thus   $\cos\theta(t)<0$, hence  $X(b)<X(a).$ Thus $\theta(t_*)=0$, which is  statement (1). This implies $|\theta(t)|<\epsilon$ for all $t\in[a,b]$, and since $\epsilon<\pi/2$ we have $\cos\theta(t)>\cos\epsilon$ for all $t\in[a,b]$. Using this in the above integral expression for $d$, one gets 
$d\geq L\cos\epsilon.$ The other inequality, $d\leq L$, is obvious. 
\qed

\begin{lemma}\label{lemma:approx}
Under the same assumptions of Proposition \ref{prop:lim}, assume also  that $\G_n:[0,1]\to\R^2$, $\G_n(t_k)=q_k$, and that  restricted to each $I_k:=[t_{k-1},t_k]$,  $\G_n$ is a constant speed parametrization of the line segment connecting $q_{k-1}$ to $q_k$, $k=1, \ldots, n$. Then $\G'_n\to\G'$ uniformly.
\end{lemma}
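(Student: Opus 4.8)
The plan is to compute $\g_n'$ explicitly in terms of $\g'$ and then recognize the difference $\g_n'-\g'$ as an averaging error controlled by the modulus of continuity of $\g'$. Write $t_k=k/n$ and $I_k=[t_{k-1},t_k]$, so $|I_k|=1/n$. On the interior of $I_k$ the curve $\g_n$ is the affine constant-speed parametrization, over an interval of length $1/n$, of the segment from $q_{k-1}=\g(t_{k-1})$ to $q_k=\g(t_k)$; hence
\be
\g_n'(t)=n\,(q_k-q_{k-1})=n\bigl(\g(t_k)-\g(t_{k-1})\bigr)=n\int_{t_{k-1}}^{t_k}\g'(s)\,ds,\qquad t\in(t_{k-1},t_k),
\ee
i.e. on each $I_k$ the derivative $\g_n'$ is the constant vector equal to the mean value of $\g'$ over $I_k$. (For $n$ large these segments are non-degenerate, since $\g$ is an immersion, so $\g_n$ is genuinely well defined.)

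Next I would estimate the difference pointwise. For $t\in(t_{k-1},t_k)$ one has, subtracting $\g'(t)=n\int_{t_{k-1}}^{t_k}\g'(t)\,ds$,
\be
\g_n'(t)-\g'(t)=n\int_{t_{k-1}}^{t_k}\bigl(\g'(s)-\g'(t)\bigr)\,ds,
\ee
so that
\be
\bigl|\g_n'(t)-\g'(t)\bigr|\le \sup_{s\in I_k}\bigl|\g'(s)-\g'(t)\bigr|\le \omega\!\left(\tfrac1n\right),\qquad\text{where}\quad \omega(\delta):=\sup_{\substack{s,t\in[0,1]\\ |s-t|\le\delta}}\bigl|\g'(s)-\g'(t)\bigr|
\ee
is the modulus of continuity of $\g'$ on $[0,1]$. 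Taking the supremum over $k$ and over $t$ yields $\|\g_n'-\g'\|_\infty\le\omega(1/n)$.

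Finally, since $\g$ is smooth — $C^1$ on the compact interval $[0,1]$ already suffices — its derivative $\g'$ is uniformly continuous, hence $\omega(1/n)\to0$ as $n\to\infty$, and therefore $\g_n'\to\g'$ uniformly, which is the claim. I do not expect any genuine obstacle here: the entire content is the identity ``difference quotient $=$ mean of the derivative'' combined with uniform continuity of $\g'$. The only point requiring a word of care is that $\g_n$ has corners at the finitely many nodes $t_k$, where $\g_n'$ is only one-sided; the uniform estimate is to be read on $[0,1]\setminus\{t_0,\dots,t_n\}$ (equivalently, for one-sided derivatives at the nodes), and those finitely many points contribute nothing to the sup-norm bound, nor to the parallel transports $T(\g_n)$ that this lemma feeds into.
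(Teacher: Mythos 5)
Your proof is correct, but it takes a genuinely different route from the paper's. You identify the piecewise-constant derivative with an average, $\g_n'(t)=n(q_k-q_{k-1})=n\int_{t_{k-1}}^{t_k}\g'(s)\,ds$ on the interior of $I_k$, so that $\g_n'(t)-\g'(t)=n\int_{I_k}\bigl(\g'(s)-\g'(t)\bigr)\,ds$ is bounded by the modulus of continuity $\omega(1/n)$ of $\g'$, which tends to $0$ by uniform continuity on the compact interval. The paper argues instead in polar terms: it writes $q_k-q_{k-1}=d_ke^{i\theta_k}$ and invokes the preceding Lemma \ref{lemma:d}, whose part (1) shows that the chord direction $\theta_k$ is attained by the tangent angle somewhere in $I_k$ (so the direction of $\g_n'$ differs from that of $\g'$ by at most the angular oscillation $\eps$ of $\g'$ on $I_k$), and whose part (2) gives $(L/n)\cos\eps\le d_k\le L/n$, hence $L\cos\eps\le|\g_n'|\le L=|\g'|$; both derivatives then lie in an annular sector of diameter at most $2L\eps$. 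Your argument buys simplicity and generality: it needs only $\g\in C^1$, uses neither the constant-speed nor the immersion hypothesis, makes Lemma \ref{lemma:d} unnecessary for this purpose, and yields the equally explicit bound $\|\g_n'-\g'\|_\infty\le\omega(1/n)$ (away from the finitely many nodes, which, as you correctly note, are immaterial for the sup-norm statement and for the transports $T(\g_n)$ it feeds into). What the paper's longer version buys is a bound phrased directly in terms of the turning of $\g'$ (the angle variation $\eps$), in the same geometric language of chord lengths and directions used in the surrounding Arm Lemma discussion.
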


\begin{wrapfigure}{r}{0.22\textwidth}
\vskip-.5em
\def\svgwidth{.24\textwidth}
\begingroup%
  \makeatletter%
  \providecommand\color[2][]{%
    \errmessage{(Inkscape) Color is used for the text in Inkscape, but the package 'color.sty' is not loaded}%
    \renewcommand\color[2][]{}%
  }%
  \providecommand\transparent[1]{%
    \errmessage{(Inkscape) Transparency is used (non-zero) for the text in Inkscape, but the package 'transparent.sty' is not loaded}%
    \renewcommand\transparent[1]{}%
  }%
  \providecommand\rotatebox[2]{#2}%
  \newcommand*\fsize{\dimexpr\f@size pt\relax}%
  \newcommand*\lineheight[1]{\fontsize{\fsize}{#1\fsize}\selectfont}%
  \ifx\svgwidth\undefined%
    \setlength{\unitlength}{18.92409482bp}%
    \ifx\svgscale\undefined%
      \relax%
    \else%
      \setlength{\unitlength}{\unitlength * \real{\svgscale}}%
    \fi%
  \else%
    \setlength{\unitlength}{\svgwidth}%
  \fi%
  \global\let\svgwidth\undefined%
  \global\let\svgscale\undefined%
  \makeatother%
  \begin{picture}(1,0.83816707)%
    \lineheight{1}%
    \setlength\tabcolsep{0pt}%
    \put(0.98218544,0.52930393){\color[rgb]{0,0,0}\makebox(0,0)[lt]{\lineheight{1.25}\smash{\begin{tabular}[t]{l} \end{tabular}}}}%
    \put(0,0){\includegraphics[width=\unitlength,page=1]{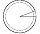}}%
    \put(0.69503434,0.4435441){\color[rgb]{0,0,0}\makebox(0,0)[lt]{\lineheight{1.25}\smash{\begin{tabular}[t]{l}\ss$\epsilon$\end{tabular}}}}%
    \put(0.72326659,0.33484008){\color[rgb]{0,0,0}\makebox(0,0)[lt]{\lineheight{1.25}\smash{\begin{tabular}[t]{l}\ss$L$\end{tabular}}}}%
    \put(0.93004906,0.4727703){\color[rgb]{0,0,0}\makebox(0,0)[lt]{\lineheight{1.25}\smash{\begin{tabular}[t]{l}\ss$L\epsilon$\end{tabular}}}}%
    \put(0,0){\includegraphics[width=\unitlength,page=2]{approx.pdf}}%
    \put(0.53000233,0.46890654){\color[rgb]{0,0,0}\rotatebox{19.135001}{\makebox(0,0)[lt]{\lineheight{1.25}\smash{\begin{tabular}[t]{l}\ss$L\cos\epsilon$\end{tabular}}}}}%
  \end{picture}%
\endgroup%

\end{wrapfigure}

\mn{\em Proof.} Let $L$ be the length of $\G$  so that  $|\G'|=L$. If $\epsilon>0$  then for  large enough $n$ the angle variation of  $\G'$ in each $I_k$  is $<\epsilon$. Let $q_k-q_{k-1}=d_k e^{i\theta_k}$. By statement (1) of Lemma \ref{lemma:d},  there is  $t_k^*\in I_k$ such that $\theta_k=\theta(t_k^*)$, hence $|\theta(t)-\theta_k|\leq \epsilon$ for all $t\in I_k$. By statement (2) of the same Lemma, $(L/n)\cos\epsilon \leq d_k\leq L/n.$ Now  one has $|\G_n'(t)|=d_kn$ for all $t\in I_k$, hence 
$L\cos\epsilon \leq |\G_n'(t)|\leq L.$ It follows that restricted to $I_k$ both $\G', \G'_n$ lie inside the anular region bounded  by  the concentric circles of radii $L$ and $L\cos\epsilon$ and  the angle $|\theta-\theta_k|\leq\epsilon$ (the shaded region in the figure). This region has diameter $\leq L\epsilon +L(1-\cos\epsilon)\leq 2L\epsilon $, thus $|\G'(t)-\G_n'(t)|\leq 2L\epsilon$ for all $t\in[0,1]$, as needed.  
\qed

\begin{lemma}\label{lemma:ct}Let $\y_0\in\R^k$, $A:[0,1]\to Mat_{k\times k}(\R)$ piecewise continuous and $\y(t)$ the solution of $\y'=A(t)\y,\ \y(0)=\y_0.$ Then $\y(1)$ depends continuously  on $A$. That is, if $A_n:[0,1]\to Mat_{k\times k}(\R)$ are piecewise continuous,  $\y_n(t)$ is the solution of $\y'_n=A_n(t)\y_n,$ $\y_n(0)=\y_0$, and $A_n\to A$ uniformly, then $\y_n(1)\to\y(1)$. 
\end{lemma}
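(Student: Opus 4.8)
The statement is a standard continuous-dependence-on-parameters result for linear ODEs with piecewise-continuous coefficients, and the plan is to prove it via a Gr\"onwall-type estimate applied to the integral equation, paying attention only to the mild technical point that the coefficient matrices are merely piecewise continuous rather than continuous.

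\textbf{Step 1: reduce to integral equations.} Both $y$ and $y_n$ are continuous on $[0,1]$ and satisfy, for all $t\in[0,1]$,
$$
y(t)=y_0+\int_0^t A(s)y(s)\,\d s,\qquad y_n(t)=y_0+\int_0^t A_n(s)y_n(s)\,\d s,
$$
where the integrands are bounded and piecewise continuous, hence Riemann integrable. Subtracting and writing $e_n(t):=y_n(t)-y(t)$ gives
$$
e_n(t)=\int_0^t A_n(s)e_n(s)\,\d s+\int_0^t\bigl(A_n(s)-A(s)\bigr)y(s)\,\d s.
$$

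\textbf{Step 2: uniform a priori bounds.} Since $A_n\to A$ uniformly, there is $M<\infty$ with $\|A_n(s)\|\le M$ and $\|A(s)\|\le M$ for all $s\in[0,1]$ and all $n$. Applying Gr\"onwall's inequality to $y_n$ itself (from its integral equation) yields $\|y_n(t)\|\le \|y_0\|e^{Mt}\le \|y_0\|e^{M}=:C$ uniformly in $n$ and $t$; similarly $\|y(t)\|\le C$. Set $\eps_n:=\sup_{s\in[0,1]}\|A_n(s)-A(s)\|\to 0$. Then the second term in the displayed identity for $e_n$ is bounded in norm by $\eps_n C$, so
$$
\|e_n(t)\|\le \eps_n C+M\int_0^t\|e_n(s)\|\,\d s\qquad\text{for all }t\in[0,1].
$$

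\textbf{Step 3: Gr\"onwall and conclude.} By Gr\"onwall's inequality (valid for continuous $\|e_n(\cdot)\|$ against a constant plus an integral term, even when the relevant functions are only piecewise continuous, since all integrals are ordinary Riemann integrals), $\|e_n(t)\|\le \eps_n C\,e^{Mt}\le \eps_n C e^{M}$ for all $t\in[0,1]$. Letting $n\to\infty$ gives $e_n(t)\to 0$ uniformly on $[0,1]$, and in particular $y_n(1)\to y(1)$. This proves the lemma; combined with Lemma \ref{lemma:approx} (which supplies $A_n:=\alpha(\g_n')\to \alpha(\g')=:A$ uniformly) it yields $T(\g_n)\to T(\g)$, completing the proof of Proposition \ref{prop:lim}.

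\textbf{Main obstacle.} There is no serious obstacle; the only point requiring a word of care is that the coefficients are piecewise continuous, so one must check that Gr\"onwall's inequality and the existence/uniqueness theory used to define $y$ and $y_n$ apply in this setting — which is exactly the content of the exercise in \cite[p.~46]{Ha} cited earlier, and follows because the integrands remain Riemann integrable and the fundamental estimates are unaffected by finitely many jump discontinuities.
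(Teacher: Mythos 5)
Your proof is correct and follows essentially the same route as the paper: pass to the integral equations, obtain uniform bounds on the solutions via Gr\"onwall, subtract, and apply Gr\"onwall again to the difference. The only (immaterial) difference is that you group the error as $\int A_n e_n+\int (A_n-A)y$ whereas the paper uses $\int(A_n-A)y_n+\int A z_n$; both decompositions work identically.
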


\begin{proof}Integrating $
\y'=A\y,\  \y'_n=A_n\y_n
$
 over $[0,t]$, we obtain
$$\y(t)=\y_0+\int_0^tA\y,\quad  \y_n(t)=\y_0+\int_0^tA_n\y_n, \ \forall t\in[0,1], 
$$
thus
$$
\z_n(t):=\y_n(t)-\y(t)=\int_0^t\left( A_n-A\right) \y_n+\int_0^t A\z_n,
$$
so
\be\label{eq:zn}
|\z_n(t)|\leq \int_0^t|A_n-A| |\y_n|+\int_0^t |A| |\z_n|.
\ee

We next recall   {\em Gronwall's inequality}:
let $u,v:[0,1]\to\R$ be non-negative functions, with $v$ continuous and $u$ piecewise continuous, such that   for some $C\geq 0$ 
$$v(t)\leq C+\int_0^t uv \quad \mbox{ for all } t\in[0,1].
$$
Then 
$$\quad v(t)\leq C \exp \int_0^t u \quad \mbox{ for all } t\in[0,1].
$$
See, e.g., \cite[p.~24]{Ha}. (Note that in this reference $u$ and $v$  are both assumed to be continuous, but the proof applies unchanged to piecewise continuous $u$ as well.) 

\mn

Now from $\y_n(t)=\y_0+\int_0^tA_n\y_n$ one has $|\y_n(t)|\leq|\y_0|+\int_0^t|A_n||\y_n|,$ so by Gronwall's inequality, $|\y_n(t)|\leq |\y_0|\exp\int_0^t |A_n|.$ Since  $A_n$ is uniformly convergent it is uniformly  bounded, so the $\y_n$ are also uniformly bounded. 
Hence  the first summand in \eqref{eq:zn} tends to 0 as $n\to\infty$. It follows that if $\epsilon>0$ then  for large enough  $n$ the first  summand is $\leq \epsilon$, hence $|\z_n(t)|\leq \epsilon+\int_0^t |A| |\z_n|.$ Again by Gronwall's inequality,
$|\z_n(1)|\leq \epsilon\exp\int_0^1 |A|.$
Thus $\z_n(1)\to 0$, as needed. 
\end{proof}

The proof of the next lemma can be safely left to the reader. 

\begin{lemma}\label{lemma:exc}Let $T_n, T\in \GL_k(\R)$ such that $T_n(\y)\to T(\y)$ for every $\y\in\R^k$. Then $T_n\to T.$
\end{lemma}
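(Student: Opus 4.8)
The final statement to prove is Lemma~\ref{lemma:exc}: if $T_n, T \in \GL_k(\R)$ satisfy $T_n(y) \to T(y)$ for every $y \in \R^k$, then $T_n \to T$ (as matrices, in any norm on $\Mat_{k\times k}(\R)$).

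\medskip\noindent\textbf{Proof proposal.}

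The plan is to exploit finite-dimensionality: pointwise convergence of a sequence of \emph{linear} maps on a finite-dimensional space is automatically uniform on bounded sets, hence gives convergence of the representing matrices. Concretely, fix the standard basis $e_1, \ldots, e_k$ of $\R^k$. The $j$-th column of the matrix $T_n$ (in this basis) is exactly the vector $T_n(e_j)$, and similarly the $j$-th column of $T$ is $T(e_j)$. By hypothesis, applied to each of the finitely many vectors $y = e_j$, we have $T_n(e_j) \to T(e_j)$ in $\R^k$ as $n \to \infty$, for each $j = 1, \ldots, k$. Thus every column of $T_n$ converges to the corresponding column of $T$, which is precisely entrywise convergence $T_n \to T$. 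Since all norms on the finite-dimensional space $\Mat_{k\times k}(\R)$ are equivalent, this is convergence in whatever matrix norm one wishes to use (e.g.\ the operator norm, which is what is implicitly needed when this lemma is applied together with Lemma~\ref{lemma:ct} and Lemma~\ref{lemma:approx} to conclude $T(\g_n) \to T(\g)$ in Proposition~\ref{prop:lim}).

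If one prefers to phrase the conclusion directly as uniform convergence of the maps on bounded sets rather than entrywise convergence, the same argument gives it with one extra line: for $y = \sum_j c_j e_j$ with $|c_j| \le \|y\|_\infty$, linearity yields $\|T_n(y) - T(y)\| \le \sum_{j=1}^k |c_j|\, \|T_n(e_j) - T(e_j)\| \le \|y\| \sum_{j=1}^k \|T_n(e_j) - T(e_j)\|$, and the right-hand sum tends to $0$ independently of $y$; taking the supremum over $\|y\| \le 1$ gives $\|T_n - T\|_{\mathrm{op}} \to 0$.

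There is essentially no obstacle here; the only thing to be a little careful about is stating which topology ``$T_n \to T$'' refers to, but by norm-equivalence in finite dimensions this ambiguity is harmless. As the paper itself says, this lemma ``can be safely left to the reader,'' and the argument above is the one-paragraph justification.
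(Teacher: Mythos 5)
Your proof is correct, and it is exactly the standard argument the paper has in mind when it leaves this lemma to the reader: apply the hypothesis to the basis vectors $e_j$ to get columnwise (hence entrywise, hence norm) convergence, with the optional upgrade to operator-norm convergence by linearity and equivalence of norms in finite dimensions. Nothing is missing.
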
 

We can now complete the proof of  Proposition \ref{prop:lim}. By Lemma \ref{lemma:approx}, $\G'_n\to\G'$ (all convergence is uniform in $[0,1]$). Since $\alpha(\G')$ depends on $\G'$ continuously (even linearly),  $\alpha(\G_n')\to\alpha(\G').$ By Lemmas \ref{lemma:ct} and \ref{lemma:exc}, $T(\G_n)\to T(\G).$ \qed

\paragraph{Proof of the Smooth Arm Lemma \ref{lemma:sarm}.}We apply Proposition \ref{prop:lim} to
$$\alpha:=-{1\over 2}\mat{dX&dY \\ dY&-dX},
$$
so that  $T(\G)=b(\G)$. It follows that  given  a smooth convex $\G$,  there exists by Proposition \ref{prop:lim}  a sequence of convex polygonal approximations $\G_n$, with the same endpoints as $\G$, such that $b(\G_n)\to b(\G)$. Let $h_n:=b(\G_n)^{-1}$,  $h:=b(\G)^{-1}.$ 
By Theorem \ref{thm:dev}, part (b), there are hyperbolic developments $\tG, \tG_n$ of $\G, \G_n$ (respectively), starting at $i\in\H$ (using the upper half-plane model of $\H$), such that $h\cdot i$ and $h_n\cdot i$ are the other endpoint of $\tG,$ $ \tG_n$ (respectively). Thus the distance between the endpoints of these curves are $\tilde d_n=\dist(h_n\cdot i, i),$ $\tilde d=\dist(h\cdot i, i).$ By the (discrete) Arm Lemma, Lemma \ref{lemma:arm}, $\tilde d_n> d$ for all $n$. By Proposition \ref{prop:lim}, $h_n\to h$, hence $\tilde d=\lim \tilde d_n\geq d$. \qed

\section{Towards another proof of Theorem  \ref{thm:nec}}
\label{sec:alt}

In this section the bicycle length $\ell>0$ is arbitrary.

\subsection{Contact geometric viewpoint}

Recall some basic definitions of contact geometry. A contact element at a point $x \in M$ of a smooth manifold is a hyperplane in the tangent space $T_x M$. A co-orientation of a contact element is a choice of one (positive) side of this hyperplane. The space of contact elements is the projectivization of the cotangent bundle $PT^* M$, and the space of cooriented contact elements is its spherization $S T^* M$. The space of contact elements carries a contact structure, a completely non-integrable distribution of codimension 1. It is given by the constraint that the velocity of the base point of a contact element 
lies in the contact hyperplane.

We are concerned with the case when $M=\R^2$. In this case, a co-oriented contact element is a point of the plane and a co-oriented line through it. The co-orientation is given by a normal vector. The orientation of the plane and a co-orientation of a line define its orientation:
we assume that the co-orienting vector and the vector that orients the line, in that order, form a positive frame. 
The contact structure is defined by the same condition: the velocity of the point is aligned with the lines through it.

We can think of the bicycle as a cooriented contact element whose foot point is the rear end of the bicycle segment and whose line contains this segment. The orientation of the bicycle segment defines a co-orientation of the contact element as described above.

More concretely, if the coordinates of the rear end of the segment are $(x,y)$ and its direction is $\theta$, then a contact structure  is given  by the kernel of the differential 1-form 
\be\label{eq:contact}
\lambda=\cos\theta\ dy - \sin\theta\ dx.
\ee
 This defines the nonholonomic no skid constraint.

\begin{wrapfigure}{r}{0.25\textwidth}
\vspace{-1em}
\def\svgwidth{.25\textwidth}
\begingroup%
  \makeatletter%
  \providecommand\color[2][]{%
    \errmessage{(Inkscape) Color is used for the text in Inkscape, but the package 'color.sty' is not loaded}%
    \renewcommand\color[2][]{}%
  }%
  \providecommand\transparent[1]{%
    \errmessage{(Inkscape) Transparency is used (non-zero) for the text in Inkscape, but the package 'transparent.sty' is not loaded}%
    \renewcommand\transparent[1]{}%
  }%
  \providecommand\rotatebox[2]{#2}%
  \newcommand*\fsize{\dimexpr\f@size pt\relax}%
  \newcommand*\lineheight[1]{\fontsize{\fsize}{#1\fsize}\selectfont}%
  \ifx\svgwidth\undefined%
    \setlength{\unitlength}{210.85302638bp}%
    \ifx\svgscale\undefined%
      \relax%
    \else%
      \setlength{\unitlength}{\unitlength * \real{\svgscale}}%
    \fi%
  \else%
    \setlength{\unitlength}{\svgwidth}%
  \fi%
  \global\let\svgwidth\undefined%
  \global\let\svgscale\undefined%
  \makeatother%
  \begin{picture}(1,0.76434717)%
    \lineheight{1}%
    \setlength\tabcolsep{0pt}%
    \put(0,0){\includegraphics[width=\unitlength,page=1]{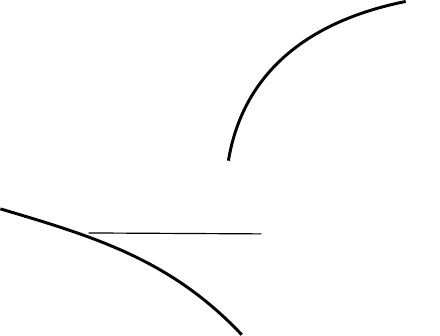}}%
    \put(0.53473229,0.04984074){\color[rgb]{0,0,0}\makebox(0,0)[lt]{\lineheight{1.25}\smash{\begin{tabular}[t]{l}\s$\G$\end{tabular}}}}%
    \put(0,0){\includegraphics[width=\unitlength,page=2]{contact.pdf}}%
    \put(0.86372333,0.68684002){\color[rgb]{0,0,0}\makebox(0,0)[lt]{\lineheight{1.25}\smash{\begin{tabular}[t]{l}\s$\g$\end{tabular}}}}%
    \put(0.34955304,0.27809826){\color[rgb]{0,0,0}\makebox(0,0)[lt]{\lineheight{1.25}\smash{\begin{tabular}[t]{l}\s$\theta$\end{tabular}}}}%
    \put(0.65019417,0.55370589){\color[rgb]{0,0,0}\makebox(0,0)[lt]{\lineheight{1.25}\smash{\begin{tabular}[t]{l}\ss$(x,y)$\end{tabular}}}}%
    \put(0,0){\includegraphics[width=\unitlength,page=3]{contact.pdf}}%
    \put(0.02419329,0.08990087){\color[rgb]{0,0,0}\makebox(0,0)[lt]{\lineheight{1.25}\smash{\begin{tabular}[t]{l}\ss$(X,Y)$\end{tabular}}}}%
    \put(0.35254759,0.48124023){\color[rgb]{0,0,0}\makebox(0,0)[lt]{\lineheight{1.25}\smash{\begin{tabular}[t]{l}\s$\ell$\end{tabular}}}}%
    \put(0,0){\includegraphics[width=\unitlength,page=4]{contact.pdf}}%
  \end{picture}%
\endgroup%

\end{wrapfigure}

Bicycle motion is then described as a smooth Legendrian curve in the space of directed segments of length $\ell$, that is, a curve in $ST^* \R^2$ that is everywhere tangent to the contact distribution. There are two projections to the plane, to the rear and to the front end of the segment. They are given by the formulas

$$
\pi_1: (x,y,\theta) \mapsto (x,y)$$
and
$$\pi_2: (x,y,\theta) \mapsto (X,Y)=(x - \ell \cos\theta, y - \ell \sin\theta).
$$
The fibers of $\pi_1$ are Legendrian, and the fibers of $\pi_2$ are transverse to the contact distribution. It follows that the second projection of a smooth Legendrian curve, the front bicycle track,  is smooth; whereas the first projection, the rear bicycle track, may have singularities. 

Singularities happen when the Legendrian curve is tangent to the fibers of $\pi_1$. Generically, this is an isolated  quadratic tangency, in which case the rear track $\g$ has a semi-cubical singularity (in bicycle terms, the rear wheel stops momentarily and the front wheel moves in a direction perpendicular to the bicycle segment).
Thus a generic rear track is a wave front, that is, a  curve that has a well defined tangent line at every point, and whose only singularities (if any) are semi-cubical cusps.

For a reference on Legendrian knots, see \cite{Et}, and on geometry and topology of wave fronts, see \cite{Ar}. 

\subsection{Relations between the rear and front tracks} 

Let $\g$ and $\G$ be oriented closed rear and front tracks, that is, the bicycle monodromy of $\G$ is hyperbolic or parabolic. 
Denote by $\rho(\g)$ and $\rho(\G)$ the rotation numbers of these curves. 

Let us define these numbers. The front track  is a closed smoothly immersed oriented curve $\G(t)$ in $\R^2$, and   $\rho(\G)$ is the winding (or total turning) number of its velocity vector  $\G'(t)$. The  back track $\g$ is given by the motion of a unit vector $\r(t),$ the ``bicycle frame", so that  $\g(t)=\G(t)+\ell\r(t).$
The rotation number  $\rho(\g)$ is, by definition, that  of  $\r(t)$. The bicycle frame is  co-oriented by $i\r(t)$ (the ``right-pedal"). The pair $(i\r(t),\g(t))$ defines a Legendrian curve $\tg(t)$ in the space of co-oriented contact elements $ST^* \R^2= S^1 \times \R^2$.

When $\g$ is smooth, $\rho(\g)$ is the winding number of $\g'(t)$. If $\g$ is a generic wave front with cusps, the number of full turns of the directed bicycle segment may be a half-integer, but if the front is co-oriented, this is still an integer. More generally, 
$\rho(\g)$ is defined also when  $\g$ is non-generic, for example, it  might even reduce to a single point. 

The topological definition of $\rho(\g)$ is as follows. As we explained earlier, one has an oriented closed Legendrian curve in the space of co-oriented contact elements $ST^* \R^2= S^1 \times \R^2$, that is, a map of a circle to this space. The projection on the $S^1$ factor provides a map of an oriented circle to an oriented circle, and $\rho(\g)$ is its degree. 

The back track $\g$, at its smooth points, is both oriented and cooriented. It is oriented by $\G$ and  co-oriented by the bike frame via the ``right-pedal-rule''. The generic singularities of $\g$ are semi-cubical cusps and the co-orientation is continuous across them. The orientation is reversed when crossing such a cusp. 

\begin{wrapfigure}{r}{0.3\textwidth}\vspace{-1em}\def\svgwidth{.28\textwidth}
\begingroup%
  \makeatletter%
  \providecommand\color[2][]{%
    \errmessage{(Inkscape) Color is used for the text in Inkscape, but the package 'color.sty' is not loaded}%
    \renewcommand\color[2][]{}%
  }%
  \providecommand\transparent[1]{%
    \errmessage{(Inkscape) Transparency is used (non-zero) for the text in Inkscape, but the package 'transparent.sty' is not loaded}%
    \renewcommand\transparent[1]{}%
  }%
  \providecommand\rotatebox[2]{#2}%
  \newcommand*\fsize{\dimexpr\f@size pt\relax}%
  \newcommand*\lineheight[1]{\fontsize{\fsize}{#1\fsize}\selectfont}%
  \ifx\svgwidth\undefined%
    \setlength{\unitlength}{93.3674984bp}%
    \ifx\svgscale\undefined%
      \relax%
    \else%
      \setlength{\unitlength}{\unitlength * \real{\svgscale}}%
    \fi%
  \else%
    \setlength{\unitlength}{\svgwidth}%
  \fi%
  \global\let\svgwidth\undefined%
  \global\let\svgscale\undefined%
  \makeatother%
  \begin{picture}(1,0.40093382)%
    \lineheight{1}%
    \setlength\tabcolsep{0pt}%
    \put(0,0){\includegraphics[width=\unitlength,page=1]{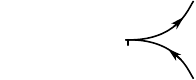}}%
    \put(0.61267012,0.01703056){\color[rgb]{0.03137255,0.03137255,0.03137255}\makebox(0,0)[lt]{\lineheight{1.25}\smash{\begin{tabular}[t]{l}\s$\mu=-1$\end{tabular}}}}%
    \put(0,0){\includegraphics[width=\unitlength,page=2]{maslov1.pdf}}%
    \put(0.06424271,0.01168012){\color[rgb]{0.03137255,0.03137255,0.03137255}\makebox(0,0)[lt]{\lineheight{1.25}\smash{\begin{tabular}[t]{l}\s$\mu=+1$\end{tabular}}}}%
    \put(1.02410011,0.19740051){\color[rgb]{0.03137255,0.03137255,0.03137255}\makebox(0,0)[lt]{\lineheight{1.25}\smash{\begin{tabular}[t]{l} \end{tabular}}}}%
  \end{picture}%
\endgroup%
\end{wrapfigure}

 The  contact distribution on the space of co-oriented contact elements of the plane is oriented by the contact form $\lambda$ of  equation \eqref{eq:contact} (the restriction of $d\lambda$ to the contact distribution is an area form on it). In fact, it is parallelized. The parallelization is given by the oriented fibers of the projection $\pi_1$. The tangent line to a closed smooth oriented Legendrian curve makes a number of turns with respect to this parallelization. This is the {\em Maslov index} $\mu$ of a Legendrian curve. 
It  equals the algebraic number of times the curve is tangent to the fibers of the projection $\pi_1$. That is, the 
Maslov index  $\mu(\g)$ is  the algebraic number of cusps, where a cusp is positive if it is traversed in the co-orienting direction and negative otherwise. See Fig.~\ref{fronts} for two  examples.

\begin{figure}[ht]
\centering
\def\svgwidth{.9\textwidth}\import{figures/}{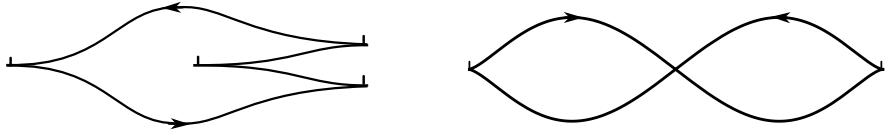}
\caption{Two back tracks with $\rho =0$ and  $\mu =2$. }
\label{fronts}
\end{figure}

The relation between the rotation numbers of the rear and front tracks is given by the next lemma from \cite{LT}. 

\begin{lemma} \label{lm:rot}
\be\label{eq:form}
\rho(\G)=\rho(\g) + \frac{1}{2} \mu(\g).
\ee
\end{lemma}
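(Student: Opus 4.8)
I would work entirely in terms of the direction angle $\theta(t)$ of the bicycle frame together with the \emph{signed} speed of the rear track, and reduce the lemma to an identity between winding numbers of periodic $\C^*$-valued loops. Identify $\R^2=\C$; since $\g-\G$ is nowhere zero (it has length $\ell$) we may choose a smooth real lift $\theta$ with $\mathbf r(t)=e^{i\theta(t)}$ and $\g(t)=\G(t)+\ell e^{i\theta(t)}$, so that by definition $\rho(\g)$ is the winding number of $t\mapsto e^{i\theta(t)}$, i.e. the degree of $t\mapsto\theta(t)$. The no-skid condition says the bicycle segment is tangent to the rear track, $\g'\parallel(\g-\G)=\ell\mathbf r$, so there is a smooth real function $v$ with $\g'(t)=v(t)e^{i\theta(t)}$, and $v$ vanishes exactly at the cusps of $\g$.

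\textbf{Step 1: a formula for $\G'$.} Differentiating $\g=\G+\ell e^{i\theta}$ gives $\g'=\G'+i\ell\theta'e^{i\theta}$, hence
\[
\G'(t)=\bigl(v(t)-i\ell\,\theta'(t)\bigr)\,e^{i\theta(t)}.
\]
Since $\G$ is an immersion, $\G'$ is nowhere zero, so $w(t):=v(t)-i\ell\,\theta'(t)$ is a nowhere-zero periodic loop in $\C^*$. Winding numbers are additive over products in $\C^*$, so
\[
\rho(\G)=\mathrm{wind}(\G')=\mathrm{wind}\bigl(e^{i\theta}\bigr)+\mathrm{wind}(w)=\rho(\g)+\mathrm{wind}(w),
\]
and it remains to prove $\mathrm{wind}(w)=\tfrac12\mu(\g)$.

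\textbf{Step 2: identifying $\mathrm{wind}(w)$ with $\tfrac12\mu(\g)$.} Consider the Legendrian lift $\tg=(\g,\theta)$ in $ST^*\R^2=S^1\times\R^2$. In the canonical frame $(\partial_\theta,\,E)$ of the contact distribution, $E:=\cos\theta\,\partial_x+\sin\theta\,\partial_y$, one computes $\tg'=\theta'\,\partial_\theta+v\,E$, so the tangent \emph{direction} of $\tg$ is the loop $t\mapsto(\theta'(t),v(t))\in\R^2$, which is nowhere zero because $\tg$ is an immersion (equivalently, because $\G'\neq 0$). Now $w=v-i\ell\theta'$, viewed as a point of $\R^2$, is the image of $(\theta',v)$ under the linear map $(a,b)\mapsto(b,-\ell a)$, whose determinant $\ell>0$; being orientation preserving it preserves winding numbers, so $\mathrm{wind}(w)=\mathrm{wind}(\theta',v)$. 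Finally, by the definition of the Maslov index recalled above, $\mu(\g)$ is the number of turns made by the tangent \emph{line} of $\tg$ relative to the fiber direction $\partial_\theta$; since a line returns to itself when its direction vector has turned by $\pi$, one full revolution of $(\theta',v)$ is two turns of the line it spans, i.e. $\mu(\g)=2\,\mathrm{wind}(\theta',v)$. Combining with Step 1 yields $\rho(\G)=\rho(\g)+\tfrac12\mu(\g)$.

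\textbf{The main obstacle.} The computation is short; the delicate part is the orientation bookkeeping needed to land on exactly $+\tfrac12\mu(\g)$: one must check (i) that the "number of turns of the tangent line" normalization yields the factor $2$ (not $\tfrac12$) relating $\mu(\g)$ to the winding of the tangent vector $(\theta',v)$ — equivalently that the algebraic count of semicubical cusps, with the co-orienting sign convention, equals $2\,\mathrm{wind}(\theta',v)$ — and (ii) that the frame $(\partial_\theta,E)$ is compatible with the chosen co-orientation $i\mathbf r$ of $\g$, so that the sign in $\mu(\g)=2\,\mathrm{wind}(\theta',v)$ is indeed $+$ rather than $-$. A secondary point worth noting in the writeup: the argument only uses that $\tg$ is a Legendrian immersion (never that $v$ has simple zeros), so it applies verbatim to degenerate rear tracks, including $\g$ a point; alternatively one may observe that $\rho(\G)$, $\rho(\g)$, $\mu(\g)$ are all invariants of the Legendrian loop $\tg$ up to homotopy and verify the identity on generic representatives.
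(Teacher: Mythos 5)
Your proof is correct, but it takes a genuinely different route from the paper. The paper's argument is a deformation-plus-local-analysis one: it fixes the rear track $\g$, varies the bicycle length, notes that $\rho(\G)$ is constant in the family, and then for $\ell\ll 1$ compares $\G$ with $\g$ directly -- along smooth arcs the tangent turning agrees, and at each cusp the smoothed front gains or loses half a turn, checked pictorially in four cases; genericity of the front is assumed and the degenerate case is handled by continuity. You instead prove the exact pointwise identity $\G'=(v-i\ell\theta')e^{i\theta}$ and use multiplicativity of winding numbers to get $\rho(\G)=\rho(\g)+\mathrm{wind}(v-i\ell\theta')$, then identify $\mathrm{wind}(v-i\ell\theta')$ with $\tfrac12\mu(\g)$ via the tangent line of the Legendrian lift in the frame $(\partial_\theta,E)$ of the contact plane. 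This buys something real: no genericity hypothesis, no limiting argument in $\ell$, validity for degenerate rear tracks (even a point) once $\mu$ is read off as the turning of the Legendrian tangent line, and a transparent explanation of the factor $\tfrac12$ (vector versus line). The sign verification you deferred in point (ii) does go through and is a one-line local check: at a generic cusp ($v=0$, $\dot v\neq 0$, $\theta'\neq 0$) one has $\g(t)-\g(t_0)=\tfrac{\dot v}{2}(t-t_0)^2 e^{i\theta_0}+\tfrac{\dot v\,\theta'}{3}(t-t_0)^3\, i e^{i\theta_0}+\dots$, so the cusp is traversed in the co-orienting direction $i\r$ exactly when $\dot v\,\theta'>0$, while the crossing of the line $[\theta':v]$ through the fiber direction $[1:0]$ contributes $\mathrm{sign}(\dot v/\theta')=\mathrm{sign}(\dot v\,\theta')$ to the degree in the same frame; hence the paper's algebraic cusp count equals $2\,\mathrm{wind}(\theta',v)$ with the correct sign, completing your argument. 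It would strengthen the writeup to include this local computation rather than flag it as an obstacle.
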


For convenience of the reader we provide a proof. 

\begin{proof}
Assume that $\g$ is a generic wave front. We shall prove the result in this case; the degenerate case follows by continuity. 

The idea is to consider a 1-parameter family of front tracks with the same rear track and variable length of the bicycle $\ell$. The rotation number of the front track remains the same in this family, so to establish the result it suffices to consider 
a very short bicycle with $\ell \ll 1$.

Along a smooth arc of $\g$, the curve $\G$ is $C^1$-close to $\g$, and the revolution of the tangent lines is the same. At the cusps, $\G$ is a smoothing of $\g$, and its rotation number gains or looses half a turn.
There are four cases, depending on the orientation of the front and the sign of the respective Maslov index, illustrated in Fig.~\ref{fig:cusps}. The result follows.
\end{proof}

\begin{figure}[h]
\centering
\def\svgwidth{.8\textwidth}
\begingroup%
  \makeatletter%
  \providecommand\color[2][]{%
    \errmessage{(Inkscape) Color is used for the text in Inkscape, but the package 'color.sty' is not loaded}%
    \renewcommand\color[2][]{}%
  }%
  \providecommand\transparent[1]{%
    \errmessage{(Inkscape) Transparency is used (non-zero) for the text in Inkscape, but the package 'transparent.sty' is not loaded}%
    \renewcommand\transparent[1]{}%
  }%
  \providecommand\rotatebox[2]{#2}%
  \newcommand*\fsize{\dimexpr\f@size pt\relax}%
  \newcommand*\lineheight[1]{\fontsize{\fsize}{#1\fsize}\selectfont}%
  \ifx\svgwidth\undefined%
    \setlength{\unitlength}{331.53710175bp}%
    \ifx\svgscale\undefined%
      \relax%
    \else%
      \setlength{\unitlength}{\unitlength * \real{\svgscale}}%
    \fi%
  \else%
    \setlength{\unitlength}{\svgwidth}%
  \fi%
  \global\let\svgwidth\undefined%
  \global\let\svgscale\undefined%
  \makeatother%
  \begin{picture}(1,0.34467687)%
    \lineheight{1}%
    \setlength\tabcolsep{0pt}%
    \put(0,0){\includegraphics[width=\unitlength,page=1]{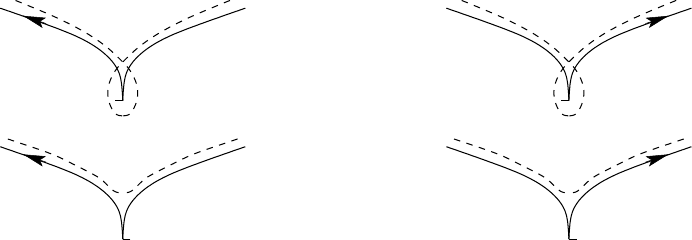}}%
    \put(0.24932963,0.21830484){\makebox(0,0)[lt]{\lineheight{1.25}\smash{\begin{tabular}[t]{l}\ss$(1,{1\over 2}, 1)$\end{tabular}}}}%
    \put(0.25030649,0.03707177){\makebox(0,0)[lt]{\lineheight{1.25}\smash{\begin{tabular}[t]{l}\ss$(0,{1\over 2},-1)$\end{tabular}}}}%
    \put(0.8827429,0.03732962){\makebox(0,0)[lt]{\lineheight{1.25}\smash{\begin{tabular}[t]{l}\ss$(0,-{1\over 2},1)$\end{tabular}}}}%
    \put(0.8827429,0.21830484){\makebox(0,0)[lt]{\lineheight{1.25}\smash{\begin{tabular}[t]{l}\ss$(-1,-{1\over 2},1)$\end{tabular}}}}%
  \end{picture}%
\endgroup%

\caption{Proof of Lemma \ref{lm:rot}. The three numbers in each case denote the changes  in $\rho(\G), \rho(\g), \mu(\g)$ (respectively) due to traversing a cusp, verifying that these changes satisfy formula \eqref{eq:form} of Lemma \ref{lm:rot}.}\label{fig:cusps}
\end{figure}

Let $L$ be the length of the front track $\G$. One has

\begin{prop} \label{prop:ineq}
$L \ge 2\pi\ell |\rho(\g)|$, with equality if and only if $\G$ is a circle of radius $\ell$, perhaps traversed several times, and $\g$ is its center.
\end{prop}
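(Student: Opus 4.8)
The plan is to use the identity $\g(t)=\G(t)+\ell\,\r(t)$ together with the no-skid constraint $\g'(t)\parallel\r(t)$ to get an integral formula for $L$ in terms of the turning of $\r$. First I would parametrize $\G$ by arclength, so $\G'=\T$ is a unit vector, and write $\r(t)=(\cos\phi(t),\sin\phi(t))$, where $\phi$ is a continuous lift of the angle of the bicycle frame; by definition $\rho(\g)=\tfrac{1}{2\pi}\int_0^L\phi'(t)\,dt$. Differentiating $\g=\G+\ell\r$ gives $\g'=\T+\ell\phi' i\r$, and the no-skid condition forces $\g'\parallel\r$, so the component of $\T$ in the direction $i\r$ must cancel: writing $\T=\langle\T,\r\rangle\,\r+\langle\T,i\r\rangle\,i\r$ and using $|\T|=1$, one finds $\g'=\bigl(\langle\T,\r\rangle+\ell\phi'\bigr)\r$ exactly when $\langle\T,i\r\rangle=0$... wait — more carefully, $\g'=\langle\T,\r\rangle\r+(\langle\T,i\r\rangle+\ell\phi')i\r$, and $\g'\parallel\r$ means $\langle\T,i\r\rangle+\ell\phi'=0$, i.e. $\ell\phi'=-\langle\T,i\r\rangle=-\sin\psi$ where $\psi$ is the signed angle from $\r$ to $\T$. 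Hence $\ell\,\phi'(t)=-\sin\psi(t)$ pointwise.

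Integrating over one period gives $2\pi\ell\,\rho(\g)=\ell\int_0^L\phi'\,dt=-\int_0^L\sin\psi(t)\,dt$, so $2\pi\ell\,|\rho(\g)|=\bigl|\int_0^L\sin\psi\,dt\bigr|\le\int_0^L|\sin\psi|\,dt\le\int_0^L 1\,dt=L$. This is the desired inequality $L\ge 2\pi\ell|\rho(\g)|$. The equality analysis is then the heart of the matter: $L=2\pi\ell|\rho(\g)|$ forces $|\sin\psi(t)|\equiv 1$ and $\sin\psi(t)$ of constant sign, i.e. $\psi\equiv\pm\pi/2$ identically, so $\T$ is everywhere perpendicular to $\r$, meaning $\r=\pm i\T$ — the bicycle frame is always normal to the front track. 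Then $\ell\phi'=\mp 1$ is constant, so from $\g'=\langle\T,\r\rangle\r=0$ we get that $\g$ is a single point, the "center"; and $\G(t)=\g-\ell\r(t)=\g\mp\ell i\T(t)$ traces the circle of radius $\ell$ about that point, possibly multiply covered (the covering number being $|\rho(\g)|$). Conversely a circle of radius $\ell$ with $\g$ its center clearly realizes equality. One should double-check the sign/orientation bookkeeping so that the cusped (non-smooth $\g$) case is handled: when $\g$ has semicubical cusps, $\g'$ vanishes there and the parametrization of $\r$ is still smooth, so the computation $\ell\phi'=-\sin\psi$ persists across cusps and the argument goes through verbatim, the only subtlety being that $\rho(\g)$ is the rotation number of $\r$, not of $\g'$, which is exactly how it was defined above.

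I expect the main obstacle to be not the inequality itself — which is a one-line consequence of the pointwise identity $\ell\phi'=-\sin\psi$ — but rather making the equality discussion airtight, in particular confirming that $|\sin\psi|\equiv 1$ really does pin down $\G$ to be a (possibly multiply traversed) circle and $\g$ its center, and ruling out any degenerate parametrization artifacts. A secondary point worth stating cleanly is the derivation of the identity $\ell\phi' = -\sin\psi$ from the contact/no-skid constraint, since it is the geometric core; everything else is then elementary estimation with the triangle inequality for integrals.
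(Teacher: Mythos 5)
Your proof is correct and follows essentially the same route as the paper: your pointwise identity $\ell\phi'=-\langle\T,i\r\rangle$ is exactly the bicycle equation $\ell\theta'=X'\sin\theta-Y'\cos\theta$ (whose right side is a dot product of unit vectors, hence $\ell|\theta'|\le 1$), and both arguments then integrate, bound $2\pi\ell|\rho(\g)|$ by $L$, and characterize equality by the front track being everywhere perpendicular to the bicycle segment, forcing the rear wheel to stand still and $\G$ to be a possibly multiply traversed circle of radius $\ell$ about $\g$. No gaps; the only difference is that you re-derive the bicycle equation from the no-skid constraint rather than quoting it.
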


\begin{proof}
Assume that $\G$ is parameterized by arc length, that is, $\G'=(X',Y')$ is a unit vector.
The bicycle differential equation reads
$$
\ell \theta' =  X' \sin\theta-Y' \cos\theta.
$$
The right hand side is the dot product of two unit vectors, hence 
$\ell |\theta'| \le 1$, with equality if and only if $\G'$ is orthogonal to the bicycle segment. 

Since 
$$
2\pi \rho(\g) = \int_0^L \theta'(t)\ dt,
$$ 
one has
$$
2\pi \ell |\rho(\g)|  \le  \int_0^L \ell|\theta'(t)|\ dt \le \int_0^L dt = L.
$$

The second inequality is equality when  $\ell |\theta'| = 1$ for all $t$, that is, the front track is always perpendicular to the bicycle segment. This means that the rear wheel does not move, $\G$ is a (multiple) circle of radius $\ell$, and $\g$ is its center. Then $L=2\pi\ell |\rho(\g)|$.

Conversely, if $\G$ is a circle of radius $\ell$, traversed $n$ times, and $\g$ is its center, then $L=2\pi n$.
\end{proof}

Thus if a closed front track $\G$ has a hyperbolic or parabolic monodromy and the rotation number of the respective rear track does not vanish, one has a lower bound on the length of $\G$, linear in the bicycle length $\ell$.

\begin{rmrk}
{\rm We note that if the monodromy is hyperbolic, then the two respective rear tracks have the same rotation numbers. Indeed,  consider the initial positions of the bicycle frame corresponding to the two fixed points of the monodromy. As these two bicycle segments of length $\ell$ move around $\G$, they never coincide, therefore they make the same number of full turns.
}
\end{rmrk}

However, if $\rho(\g)=0$, one can make the length of $\G$ arbitrarily small for every $\ell$.
For example, let $\g$ be the rear track depicted in Fig.~\ref{saucer}. 
The respective front track $\G$ is regularly homotopic to an eight-shaped curve. 
By contraction in the vertical direction, one can make the wave front $\g$ as flat, that is, as $C^1$-close to a doubly traversed segment, as one wishes. Then $\G$  is also close to a doubly traversed segment of the same length, translated a distance $\ell$. Further scaling down $\g$, one can make $L$ as small as one wishes. 

\begin{figure}[ht]
\centering
\def\svgwidth{.7\textwidth}
\begingroup%
  \makeatletter%
  \providecommand\color[2][]{%
    \errmessage{(Inkscape) Color is used for the text in Inkscape, but the package 'color.sty' is not loaded}%
    \renewcommand\color[2][]{}%
  }%
  \providecommand\transparent[1]{%
    \errmessage{(Inkscape) Transparency is used (non-zero) for the text in Inkscape, but the package 'transparent.sty' is not loaded}%
    \renewcommand\transparent[1]{}%
  }%
  \providecommand\rotatebox[2]{#2}%
  \newcommand*\fsize{\dimexpr\f@size pt\relax}%
  \newcommand*\lineheight[1]{\fontsize{\fsize}{#1\fsize}\selectfont}%
  \ifx\svgwidth\undefined%
    \setlength{\unitlength}{364.31298065bp}%
    \ifx\svgscale\undefined%
      \relax%
    \else%
      \setlength{\unitlength}{\unitlength * \real{\svgscale}}%
    \fi%
  \else%
    \setlength{\unitlength}{\svgwidth}%
  \fi%
  \global\let\svgwidth\undefined%
  \global\let\svgscale\undefined%
  \makeatother%
  \begin{picture}(1,0.13703053)%
    \lineheight{1}%
    \setlength\tabcolsep{0pt}%
    \put(0,0){\includegraphics[width=\unitlength,page=1]{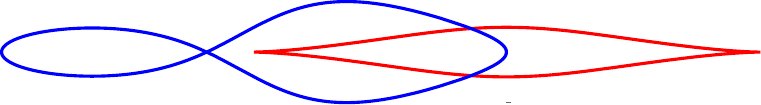}}%
    \put(0.85819195,0.10192854){\color[rgb]{0.03137255,0.03137255,0.03137255}\makebox(0,0)[lt]{\lineheight{1.25}\smash{\begin{tabular}[t]{l}\s$\g$\end{tabular}}}}%
    \put(0.30323419,0.11795925){\color[rgb]{0.03137255,0.03137255,0.03137255}\makebox(0,0)[lt]{\lineheight{1.25}\smash{\begin{tabular}[t]{l}\s$\G$\end{tabular}}}}%
  \end{picture}%
\endgroup%

\caption{The simplest wave front, the ``flying saucer" $\g$, and the associated front track $\G$ (blue).  Both $\rho(\g)$ and $\mu(\g)$ vanish, hence also  $\rho(\G)$, by equation \eqref{eq:form}.}
\label{saucer}
\end{figure}

\subsection{Two conjectures} 

The following conjectures are based on computer experiments. 

\begin{conjecture} \label{conj:1}
If $\G$ is a simple strictly convex curve with hyperbolic or parabolic monodromy, then $\rho(\g)=1$.
\end{conjecture}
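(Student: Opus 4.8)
The plan is to reduce the conjecture to a scalar first-order ODE and then to a statement about the rotation number of its monodromy circle map. I would parametrize $\G$ by arc length and write $\G'=(\cos\phi,\sin\phi)$; strict convexity gives $\kappa=\phi'>0$ and $\phi(t+L)=\phi(t)+2\pi$, so $\rho(\G)=1$ and, by Lemma \ref{lm:rot}, proving the conjecture is equivalent to proving $\mu(\g)=0$, i.e. $\rho(\g)=1$. Writing the bicycle frame as $\r=(\cos\theta,\sin\theta)$ (so that $\rho(\g)$ is the winding number of $\theta$) and setting $\psi:=\theta-\phi$, the angle between the bicycle segment and the front tangent, the bicycle equation becomes
$$\ell\psi'=\sin\psi-\ell\kappa(t),\qquad \kappa>0,\quad \int_0^L\kappa\,dt=2\pi,\quad \int_0^L e^{i\phi}\,dt=0.$$
Hyperbolicity or parabolicity of the monodromy is exactly the existence of a solution with $\psi(t+L)\equiv\psi(t)\ (\mathrm{mod}\ 2\pi)$, and for such a solution $\psi(L)-\psi(0)=2\pi(\rho(\g)-1)$; so the goal is to show $\psi(L)=\psi(0)$.

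First I would dispatch the easy inequality $\rho(\g)\le 1$. Since $\kappa>0$, wherever $\psi\in\pi\Z$ one has $\ell\psi'=-\ell\kappa<0$: the multiples of $\pi$ are one-sided decreasing barriers, so $\psi$ never crosses a multiple of $\pi$ in the increasing direction. Hence $\psi(L)<\psi(0)+\pi$, which together with $\psi(L)-\psi(0)\in 2\pi\Z$ forces $\psi(L)\le\psi(0)$, i.e. $\rho(\g)\le 1$; no global information on $\G$ enters here, and this is the $L>2\pi$ side of Theorem \ref{thm:nec} in pointwise form. The same barrier property shows that a periodic solution with $\psi(L)=\psi(0)$ must remain in a single interval $(2j\pi,(2j+1)\pi)$; after shifting $\psi$ by a multiple of $2\pi$ — which changes neither $\theta$ as a direction nor $\rho(\g)$ — we may take it in $(0,\pi)$, i.e. the frame always points strictly into the interior side of every tangent line of $\G$, and then $\rho(\g)=1$. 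Thus everything reduces to excluding the remaining possibility: that the periodic solution drops by $2\pi$ or more over one period, equivalently that the monodromy circle map has rotation number $\le -1$.

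This last point is the crux, and I would attack it in three ways. (i) \emph{Continuation in $\ell$:} for $\ell\ll 1$ the closed rear track is $C^1$-close to $\G$, so $\rho(\g)=1$, and $\rho(\g)\in\Z$ is locally constant on any set of lengths $\ell$ over which the monodromy stays hyperbolic or parabolic; this settles the conjecture on the connected component of that set containing $\ell=0$, the only gap being the companion conjecture that this set is an interval $(0,\ell_0]$. (ii) \emph{Trapping the rear track:} if a closed rear track of a convex front track can be shown to lie in the convex region bounded by $\G$ — e.g. by a Gronwall-type monotonicity for the distance from $\g(t)$ to that region, or by developing $\g$ hyperbolically and running an argument like that of Proposition \ref{prop:emb} — then $\psi\in(0,\pi)$ for all $t$ and the easy half finishes the proof. (iii) \emph{A sharp descent estimate:} in the support-angle parametrization $d\psi/d\phi=\rho_c(\phi)\sin\psi/\ell-1$, with $\rho_c=1/\kappa>0$, $\int_0^{2\pi}\rho_c\,d\phi=L>2\pi\ell$ (Theorem \ref{thm:nec} rescaled) and $\int_0^{2\pi}\rho_c\,e^{i\phi}\,d\phi=0$, one has to bound the single passage of a decreasing $\psi$ through a neighbourhood of $\pi/2\ (\mathrm{mod}\ \pi)$, where necessarily $\rho_c<\ell$, and conclude that the net descent over one period of $\phi$ stays below $2\pi$.

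The main obstacle is precisely this second half, which is why the statement is posed as a conjecture. I expect route (i) to be the most tractable, but it leans on the unproved $\ell_0$-conjecture; route (ii) reduces the problem to the plausible but apparently nontrivial fact that a closed rear track of a convex curve is contained in the region the curve bounds; and route (iii) is delicate because nothing forbids the radius of curvature $\rho_c$ from being small near the critical value of $\psi$ even though it is large on average.
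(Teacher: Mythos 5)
The statement you are addressing is posed in the paper as a conjecture (Conjecture \ref{conj:1}), supported only by computer experiments; the authors say explicitly that they were unable to prove it, so there is no proof in the paper to measure your proposal against. Your write-up, to its credit, does not claim to close it either, and its verifiable content is correct and goes a bit beyond what the paper records. Writing $\G'=(\cos\phi,\sin\phi)$, $\r=(\cos\theta,\sin\theta)$, $\psi=\theta-\phi$, the reduction to $\ell\psi'=\sin\psi-\ell\kappa$ is right; the barrier argument at $\psi\in\pi\Z$ (where $\ell\psi'=-\ell\kappa<0$, assuming ``strictly convex'' means $\kappa>0$ pointwise) does prove that every closed rear track of such a $\G$ has $\rho(\g)\leq 1$, and that $\rho(\g)=1$ holds exactly when the invariant solution stays in one interval $(2j\pi,(2j+1)\pi)$ (to pin down the even intervals you should add the one-line remark that on an odd interval $\sin\psi<0$ forces $\psi$ strictly decreasing, contradicting periodicity). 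Your route (i) is precisely the implication the paper itself records, namely that Conjecture \ref{conj:2} implies Conjecture \ref{conj:1}, and the paper's only other surrounding result is that Conjecture \ref{conj:1} together with Proposition \ref{prop:ineq} yields Theorem \ref{thm:nec}.

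The genuine gap is the one you name yourself: excluding a periodic solution whose net drop over one period is $2\pi$ or more, i.e. excluding $\rho(\g)\leq 0$. None of your three routes closes it. Route (i) leans on Conjecture \ref{conj:2}, which is equally open; route (ii) leans on the unproved claim that a closed rear track of a convex front track lies in the region bounded by $\G$ (if true this would give $\sin\psi\geq 0$ and finish, but it is essentially of the same difficulty, and it is not delivered by Proposition \ref{prop:emb}, which concerns the hyperbolic development of the front track, while the Euclidean rear track is in general only a wave front); route (iii) is a heuristic, and, as you admit, nothing prevents the radius of curvature from being small exactly where $\sin\psi$ is close to $1$, so the descent through that zone is not controlled. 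So what you have is a correct reformulation plus the easy half $\rho(\g)\leq 1$ and the known implication from Conjecture \ref{conj:2} — useful partial progress, but not a proof, consistent with the statement's status in the paper as an open conjecture.
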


In view of Proposition \ref{prop:ineq}, this conjecture implies Theorem 2. It also implies that if the monodromy is parabolic, then $L\geq 2\pi$ with equality iff $\G$ is a circle of radius 1.

\mn 

Fix a closed front track $\G$ and consider the dependance of the type of the monodromy (hyperbolic, parabolic, elliptic) on the bicycle length $\ell$. If $\ell \ll 1$, the monodromy is hyperbolic: this is intuitively clear -- the bicycle segment remains nearly tangent to $\G$, see Corollary \ref{cor:small} for a proof. As $\ell$ increases, the type of the monodromy may change; if $\G$ bounds a non-zero area, the monodromy  eventually becomes elliptic, as the theory of  the hatchet planimeter shows, see \cite{F, FLT}. 

\begin{conjecture} \label{conj:2}
If $\G$ is a simple strictly convex curve, then there exists a number $\ell_0$ such that if $\ell < \ell_0$ then the monodromy is hyperbolic, and if $\ell > \ell_0$ it is elliptic.
\end{conjecture}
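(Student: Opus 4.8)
\medskip
\noindent\textbf{A plan of attack.} I would follow the monodromy through its trace. Fix a closed curve $\G$ of length $L$, write $A_1(t):=-\tfrac12\mat{X'&Y'\\ Y'&-X'}$ for its Foote matrix in the arclength parametrization, so that (Section \ref{sec:ricatti}) the bicycle transport with segment length $\ell$ is the value at $t=L$ of the flow of $\dot b=\tfrac1\ell A_1(t)\,b$, $b(0)=\mathrm{Id}$, and set $\tau(\ell):=\tr b_\ell(\G)$, using the canonical lift to $\SLt$. Then $\tau$ is real-analytic on $(0,\infty)$, and the monodromy is hyperbolic, parabolic or elliptic according as $|\tau(\ell)|>2$, $=2$ or $<2$. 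Two limits are immediate: $A_\ell=\tfrac1\ell A_1\to 0$ uniformly on $[0,L]$ as $\ell\to\infty$, hence $\tau(\ell)\to 2$; and for $\ell$ small the rescaled curve $\G/\ell$ has curvature $\le1$ and not $\equiv1$, so the monodromy is hyperbolic by Theorem \ref{thm:suf}/Corollary \ref{cor:small}.

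The first substantive step is to show $\tau(\ell)<2$ for every $\ell>0$ — the monodromy is never the identity nor a parabolic of trace $+2$. Since $\G$ is convex with total curvature $2\pi$, the tangent of the hyperbolic development $\tG$ turns by a full $2\pi$ over each period, and this ``odd winding'' of the path $b_\ell|_{[0,L]}$ should place $b_\ell(\G)$ on the trace-$<2$ side of $\SLt$ (for the circle of radius $R$, $\tau(\ell)=-2\cosh\!\big(\pi\sqrt{(R/\ell)^2-1}\big)$ if $\ell<R$ and $-2\cos\!\big(\pi\sqrt{1-(R/\ell)^2}\big)$ if $\ell>R$, matching this). Granted $\tau<2$, the monodromy type is governed by the sign of $\tau(\ell)+2$ alone: hyperbolic $\Leftrightarrow\tau<-2$, parabolic $\Leftrightarrow\tau=-2$, elliptic $\Leftrightarrow\tau>-2$. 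As $\tau+2$ is negative near $0$ and $\to 4$ as $\ell\to\infty$, it vanishes somewhere, and Conjecture \ref{conj:2} becomes the assertion that it vanishes \emph{exactly once}, i.e.\ $\tau$ meets the level $-2$ exactly once.

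To force this, I would prove $\tau'(\ell_*)>0$ at every $\ell_*$ with $\tau(\ell_*)=-2$, which excludes a second crossing. Differentiating the flow equation in $\ell$ gives $\tfrac{d}{d\ell}b_\ell(\G)=-\tfrac1{\ell^2}\,b_\ell(\G)\int_0^L\Ad\!\big(b_\ell(s)^{-1}\big)A_1(s)\,ds$, hence
\[
\tau'(\ell)=-\tfrac1{\ell^2}\int_0^L\tr\!\big(b_\ell(\G)\,e_s\big)\,ds,\qquad e_s:=b_\ell(s)^{-1}A_1(s)\,b_\ell(s)\in\slt,
\]
with $e_s^2=\tfrac14\mathrm{Id}$ (as $\G$ is unit speed), the unit generator of $\slt$ adapted to $\tG$ at $\tG(s)$. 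At $\ell_*$, writing $b_{\ell_*}(\G)=-\mathrm{Id}-\nu$ with $\nu$ nilpotent and using $\tr A_1\equiv0$, this collapses to $\tau'(\ell_*)=\tfrac1{\ell_*^2}\int_0^L g\,ds$, where $g:=\tr(\nu e_s)$. Completing $e_s$ to an adapted moving frame $(e_s,\tilde e_s,n_s)$ of $\slt$ ($e_s,\tilde e_s$ spacelike, $n_s$ timelike for $\langle X,Y\rangle:=\tr XY$), the structure equations driven by $\kappa:=\kappa_\G$ read $e_s'=\kappa\tilde e_s$, $\tilde e_s'=-\kappa e_s+c\,n_s$, $n_s'=c'\tilde e_s$ for nonzero constants $c,c'$; nilpotency of $\nu$ gives the null relation $(\tr\nu n_s)^2=4\big(g^2+(\tr\nu\tilde e_s)^2\big)$, so $\hat g:=\tr(\nu n_s)$ never vanishes and keeps a constant sign. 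Integrating $(\tr\nu\tilde e_s)'=-\kappa g+c\,\hat g$ over one period then gives $\int_0^L\kappa\,g\,ds=c\int_0^L\hat g\,ds$, a quantity that is nonzero with a definite (and, as the circle shows, ``correct'') sign.

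\medskip
\noindent\textbf{Where it gets hard.} The last identity controls the sign of the \emph{curvature-weighted} integral $\int_0^L\kappa\,g$, whereas $\tau'(\ell_*)$ is governed by the unweighted $\int_0^L g$; for non-constant $\kappa$ these may disagree, and the weighted estimate used only periodicity of $\tr\nu\tilde e_s$ and holds for either sign of $\kappa$. Upgrading it to $\int_0^L g\,ds>0$ at every parabolic parameter is precisely where strict convexity must be used substantively, and I expect this to be the crux — unresolved, as in the authors' discussion. A plausible route runs through the companion Conjecture \ref{conj:1}: $\int_0^L g\,ds$ should equal a positive multiple of a turning number of $\tG$ relative to the fixed point of the parabolic monodromy, which one expects to be $\pm1$ exactly when the closed back track has rotation number $\rho(\g)=\pm1$. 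A secondary gap is the winding argument giving $\tau<2$: for curves of rotation number $\ne\pm1$ the development's tangent need not wind once and $\tau$ can exceed $2$, so convexity is essential there as well.
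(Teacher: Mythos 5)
The statement you are addressing is Conjecture \ref{conj:2}: the authors present it as an open problem supported by computer experiments and explicitly say they could not prove it, so there is no proof in the paper to compare against, and your proposal does not close the gap either — as you yourself acknowledge. To give credit where due, the parts you actually carry out are sound: the trace reduction (hyperbolic/parabolic/elliptic according to $|\tau(\ell)|$ versus $2$, with $\tau$ analytic in $\ell$), the variational formula $\tau'(\ell)=-\frac{1}{\ell^2}\int_0^L\tr\bigl(b_\ell(\G)\,e_s\bigr)\,ds$, the reduction at a trace $-2$ parameter to the sign of $\int_0^L g\,ds$ with $g=\tr(\nu e_s)$ and $\nu$ nilpotent, the frame equations (with $c=1/(2\ell)$, $c'=2/\ell$), the null relation forcing $\hat g=\tr(\nu n_s)$ to have constant sign, and the periodicity identity $\int_0^L\kappa\,g\,ds=\frac{1}{2\ell}\int_0^L\hat g\,ds$ all check out.

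The genuine gaps are the ones that carry all the difficulty. First, the step you need is $\int_0^L g\,ds>0$ at every parabolic parameter, but your identity controls only the curvature-weighted integral $\int_0^L\kappa\,g\,ds$; for non-constant $\kappa$ there is no implication between the two, and you offer no mechanism by which strict convexity converts one into the other — this is precisely the unresolved crux, of essentially the same depth as the conjecture itself (note the paper's example in Figure \ref{fig:fish} shows the single-crossing picture really does fail without convexity, so convexity must enter this step in an essential way). Second, the claim $\tau(\ell)<2$ for all $\ell>0$ is supported only by a winding heuristic and the circle computation; without it you cannot exclude trace $+2$ parabolic or identity values, and the dichotomy ``hyperbolic $\Leftrightarrow\tau<-2$, elliptic $\Leftrightarrow\tau>-2$'' on which the whole single-crossing argument rests is unjustified. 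Third, even granting $\tau<2$, at a crossing you must separately rule out $b_\ell(\G)=-\mathrm{Id}$, where $\nu=0$ and the constant-sign argument for $\hat g$ collapses. As it stands, the proposal is a reduction of the conjecture to two or three unproven statements, not a proof — which is consistent with, and does not improve on, the statement's status in the paper.
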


Conjecture \ref{conj:2} implies Conjecture \ref{conj:1}. Indeed, for $\ell \ll 1$, the rear track is also a simple strictly convex curve, and hence $\rho(\g)=1$. As $\ell$ increases, the rotation number changes continuously, hence $\rho(\g)=1$ persists until $\ell$ reaches the break point $\ell_0$. 

However,  the dependence of the type of the monodromy on the bicycle length $\ell$ may be more complicated for other front tracks.

For example, let $\g$ be a back track with $\rho(\g)=0$ and  $\mu(\g)=2$ (two such examples  are shown in Fig.~\ref{fronts}).  Let $\G$ be the corresponding front track with  $\ell=1$. By Lemma \ref{lm:rot},  $\rho(\G)=1.$  For sufficiently small value of $\ell$, the monodromy of $\G$ is hyperbolic and the rotation number of a respective rear track  $\g$ is that of $\G$, i.e., $\rho(\g)=1$ (see Corollary \ref{cor:small} and \cite[Prop. 3.15]{BLPT}). 
By construction, the monodromy of $\G$ with $\ell=1$ is also hyperbolic (or parabolic), but $\rho(\g)=0$. Since $\rho(\g)$ varies continuously with respect to $\ell$ in an interval with hyperbolic or parabolic monodromies, this implies that the monodromy of $\G$ must become elliptic for some  $\ell \in (0,1)$.

We illustrate this with the second example of Fig.~\ref{fronts}, where $g, \G$ are explicitly parametrized  by 
 \be\label{eq:exact}\g(t):= \left(2\cos t,\sin (2t)\sin ^2t \right),\quad \G(t)=\g(t)+\sqrt{{2\over 3-\cos (6 t)}}\left(-1, \sin (3 t)\right).
\ee
The monodromy of $\G$ is then calculated numerically, and is found to be hyperbolic for $\ell\in (0.9272, 1)\cup(1,\infty),$ elliptic  in the interval $\ell\in(0.9272, 1)$, with parabolic endpoints (the value $\ell=1$ is precisely parabolic, $\ell=0.9272$ is approximate). In the interval $\ell\in (0,0.9272]$ the rotation numbers of back tracks are $\rho(\g)=1$ and the Maslov index is $\mu(\g)=0$, and in the interval $\ell\in[1,\infty)$ one has $\rho(\g)=0$ and $\mu(\g)=2$, so that in all cases equation  \eqref{lm:rot} is satisfied. See Fig.~\ref{fig:fish}. 

\begin{figure}[H]
\centering
\def\svgwidth{.9\textwidth}\import{figures/}{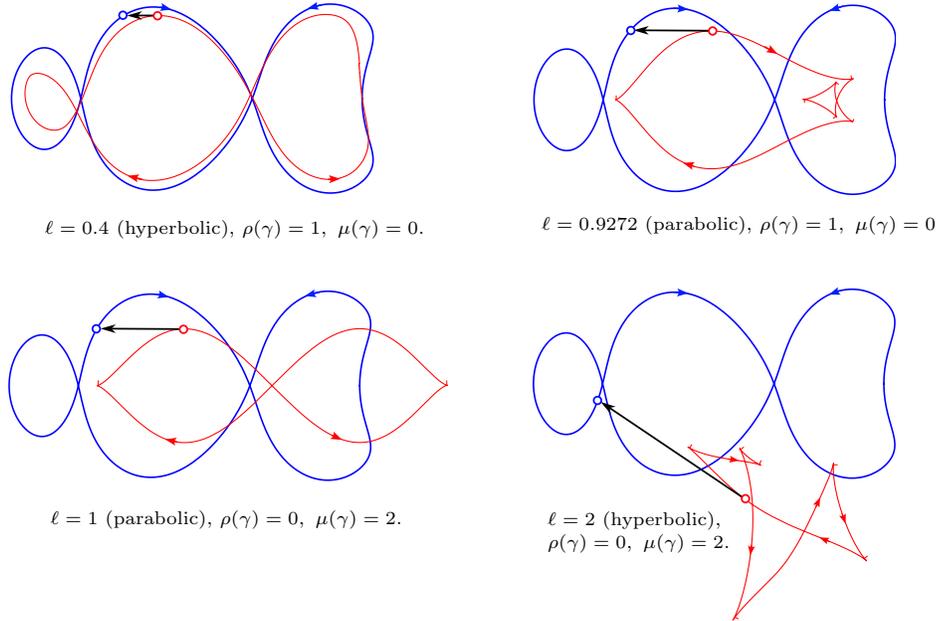}
\caption{An example of  a non-convex front track $\G$ (blue), given by equation \eqref{eq:exact},  with $\rho(\G)=1,$ whose set of hyperbolic bike lengths $\ell$ consists of two disconnected intervals, with a ``gap" of elliptic bike lengths $(0.9272, 1).$  The back tracks $\g$ (red) with bike lengths to the left of the gap have $\rho(\g)=1$ and $\mu(\g)=0$ (the upper two cases), and to the right of the gap have $\rho(\g)=0$ and $\mu(\g)=2$ (the lower two cases). The back track for $\ell=1$ is given by formula \eqref{eq:exact}, the other three were found by solving the bicycle equation \eqref{eq:bei} numerically. Conjecture  \ref{conj:2} states that this gap does not occur  for {\em strictly convex} front tracks $\G$.}\label{fig:fish}
\end{figure}

\end{document}